\newcommand\cF{{\cal F}}
\newcommand\F{\mathbb{F}}
\newcommand\la{\langle}
\newcommand\ra{\rangle}
\newcommand\dla{\la\!\la}
\newcommand\dra{\ra\!\ra}
\newcommand\Gm{\Gamma}
\newcommand\Aut{\operatorname{Aut}}
\newtheorem{lemma}{Lemma}[section]
\newtheorem*{remark}{Remark}
\newtheorem{theorem}[lemma]{Theorem}
\newtheorem{definition}[lemma]{Definition}
\newtheorem{proposition}[lemma]{Proposition}
\newtheorem{question}[lemma]{Question}
\title{Axial algebras of Monster type $(2\eta, \eta)$ for $D$ diagrams. I}
\date{}
\author{Andrey Mamontov and Alexey Staroletov\footnote{The work is supported by Mathematical Center in Akademgorodok under agreement No.075-15-2022-282 with the Ministry of Science and Higher Education of the Russian
Federation}}
\begin{document}
\maketitle
\newcommand{\Addresses}{{
		\bigskip\noindent
		\footnotesize
		Andrey~Mamontov, \textsc{Sobolev Institute of Mathematics, Novosibirsk, Russia;}\\\nopagebreak
		\textsc{Novosibirsk State University, Novosibirsk, Russia;}\\\nopagebreak
		\textit{E-mail address: } \texttt{andreysmamontov@gmail.com}
		
		\medskip\noindent
		Alexey~Staroletov, \textsc{Sobolev Institute of Mathematics, Novosibirsk, Russia;}\\\nopagebreak
		\textsc{Novosibirsk State University, Novosibirsk, Russia;}\\\nopagebreak
		\textit{E-mail address: } \texttt{staroletov@math.nsc.ru}
  	
		\medskip
}}

\begin{abstract}
Axial algebras are a class of commutative algebras generated by idempotents, with adjoint action semisimple and satisfying a prescribed fusion law. Axial algebras were introduced
by Hall, Rehren, and Shpectorov in 2015 as a broad generalization of Majorana algebras of Ivanov, whose axioms were derived from the properties of the Griess algebra for the Monster group. 

The class of Matsuo algebras was introduced by Matsuo and later generalized by
Hall, Rehren, and Shpectorov. A Matsuo algebra $M$ is built by a set of 3-transpositions $D$.
Elements of $D$ are idempotents in $M$ and called axes. In particular, $M$ is an example of an axial algebra. It is known that double axes, i.e., sums of two orthogonal axes in a Matsuo algebra, satisfy the fusion law of Monster type. 
This observation shows that a set consisting of axes and double axes can generate a subalgebra of Monster type in the Matsuo algebra. Subalgebras corresponding to various series of 3-transposition groups are extensively studied by many authors. 

In this paper, we study primitive subalgebras generated by a single axis and two double axes. 
We classify all such subalgebras in seven out of nine possible cases for a diagram on 3-transpositions that are involved in the generating elements. We also construct several infinite series of axial algebras of Monster type generalizing our 3-generated algebras.

\end{abstract}

\section{Introduction}
\label{Introduction}

{\it Axial algebras} are commutative non-associative algebras generated by a set of idempotents, called axes, satisfying a prescribed fusion law. 
This class of algebras was introduced by Hall, Rehren, and Shpectorov~\cite{hrs} as a broad generalization of the class of Majorana algebras defined by Ivanov in the framework of Majorana theory~\cite{ivan}. The main inspiration for both theories is the Griess algebra, which
is a real commutative nonassociative algebra of dimension 196884 that has the Monster group as its automorphism group.
More information on axial algebras 
and related topics can be found in the recent survey~\cite{survey}.

The class of {\it Matsuo algebras} was introduced
by Matsuo~\cite{Matsuo} and later generalized in~\cite{hrs1}.
Recall that a group $G$ is a  {\it 3-transposition group} if it is generated by a normal set $D$ of involutions such that the order of the product of any pair of these involutions is not greater than three. If $\mathbb{F}$ is a field of characteristic not two
and $\eta\in\mathbb{F}\setminus\{0,1\}$, then
the Matsuo algebra $M_\eta(G,D)$ has $D$ as its basis, where each element of $D$ is an idempotent. Moreover, if $c,d\in D$ 
and $|cd|=2$, then their product in $M_\eta(G,D)$ equals 0,
and if $|cd|=3$, then their product equals $\frac{\eta}{2}(c+d-c^d)$. It turned out that the adjoint operators of elements $d\in D$, considered as idempotents of $M_\eta(G,D)$, have the minimal polynomial dividing $(x-1)x(x-\eta)$ and restrictive multiplication rules on their eigenvectors. These properties generalize the
Peirce decomposition for idempotents in Jordan algebras, where 
$1/2$ is replaced with $\eta$. Matsuo algebras are examples of so-called axial algebras of Jordan type $\eta$, generating axes are exactly involutions of $D$. In fact, each element $d\in D$ is a {\it primitive axis} \cite{hrs1}, that is the equality $dx=x$ in the algebra implies that $x$ is a scalar multiple of $d$.

The double axis construction was introduced by Joshi in \cite{Joshi-mas,Joshi-phd} and developed in \cite{gjmss}. Suppose that $M=M_\eta(G,D)$ is a Matsuo algebra, where $\eta\neq\frac{1}{2}$. 
We say that the axes $a, b\in D$ are orthogonal if $ab = 0$, 
or equivalently $a$ and $b$ commute as involutions in $G$. If $a$ and $b$ are orthogonal, then $x := a+b$ is an idempotent in $M$ called a {\it double axis}. It is proved in \cite{Joshi-phd,gjmss} that every double axis obeys the fusion law of Monster type $\mathcal{M}(2\eta,\eta)$ (see Fig.~\ref{M}).
Thus, this construction links three different objects:  3-transposition groups and axial algebras of Jordan and Monster types. We note that similar consideration was very useful in finite groups \cite{norton}, where the product of any two commuting 3-transpositions is a 6-transposition: an obvious example is provided by conjugacy classes of  permutations $(1,2)$ and $(1,2)(3,4)$ in a symmetric group of degree at least four.

In~\cite{Joshi-phd,gjmss}, authors started to classify all primitive 2- and 3-generated subalgebras $M$ of Matsuo algebras, where each generating element is either a single axis or a double axis.
Following~\cite{gjmss}, we say $M$ has type $A$ if it is generated by a single axis and a double axis; $M$ has type $B$ if it is generated by two double axes. 
Similarly, for the 3-generated case we consider three possibilities: type $C$ for a generating  set comprising of two single axes and one double axis; type $D$ for one single axis and two double axes; and type $E$ for three double axes. All possible primitive algebras of types $A$, $B$, and $C$ were classified and described in \cite{Joshi-phd,gjmss}. In this work we study primitive algebras of $D$ type. The classification for type $E$ is an ongoing project.

Suppose that an algebra $A$ is generated by a single axis $a$ and double axes $b+c$ and $d + e$, where $a, b, c, d, e$ are distinct elements of
a set of 3-transpositions $D$. 
The diagram on the support set $\{a, b, c, d, e\}$ is the graph on this set in which two points are connected by an edge iff the corresponding involutions do not commute in the group. 
It is proved in~\cite{gjmss} that there are nine possible diagrams (see Fig.~\ref{Type D}), numbering by $D_1,D_2,\ldots,D_9$. In this paper, we classify all primitive algebras of $D$ type for first seven diagrams. We consider here only seven diagrams, since in these cases the resulting groups and algebras are not too large, which allows us to reproduce almost all  calculations by hand. The other two diagrams contain a large number of subcases, the consideration of which depends on computer calculations. 
Our main result is the description of algebras of $D$ type, we formulate it shortly as follows.
 
\begin{theorem} Suppose that $M=M_\eta(G, D)$ is a Matsuo algebra, where $\eta\neq\frac{1}{2}$.
Suppose that $A$ is a primitive axial subalgebra in $M$ generated by a single axis and two double axes. Denote the diagram on the support set of the generators of $A$ by $T$.
\begin{enumerate}[(i)]
\item If $T$ has type $D_4$, then $\dim A\in\{7,13,20\}$.
\item If $T$ has type $D_5$, then $A$ is isomorphic to the 9-dimensional algebra $Q_3(\eta)$.
\item If $T$ has type $D_7$ and $\eta\neq2$, then $\dim A\in\{9,12,39,42,30,90\}$,
and if $T$ has type $D_7$ and $\eta=2$, then $\dim A\in\{9,12,39,42,29,89\}$.
\end{enumerate}
\end{theorem}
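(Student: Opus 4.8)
The plan is to handle the three diagrams $D_4$, $D_5$, $D_7$ separately, in each case reducing the classification to a finite computation inside an explicitly identified Matsuo algebra. The starting point is that the subalgebra $A$ generated by the axis $a$ and the double axes $b+c$, $d+e$ lies in the linear span of a subset $\widehat{D}\subseteq D$: whenever basis axes $p,q\in D$ satisfy $|pq|=3$ their product is $\frac{\eta}{2}(p+q-p^q)$, so $A$ is contained in the span of the smallest subset of $D$ that contains $\{a,b,c,d,e\}$ and is closed under the partial operation $(p,q)\mapsto p^q$. This closure depends only on products of the generating transpositions, hence not on $\eta$; for the diagrams at hand it is finite, $\widehat{D}$ is normal in $H:=\la\widehat{D}\ra$, and the span of $\widehat{D}$ in $M$ is a subalgebra isomorphic to the Matsuo algebra $M_\eta(H,\widehat{D})$, so that $A\le M_\eta(H,\widehat{D})$. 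First I would carry out this support-closure for each of the three diagrams.

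Second, I would enumerate the possibilities for the pair $(H,\widehat{D})$. The local diagram on $\{a,b,c,d,e\}$ does not determine the ambient 3-transposition group $G$, and hence does not determine $H$: extra relations among the generators collapse $\widehat{D}$ onto various quotients, and this is exactly what produces the several values listed in the theorem. Using the classification of 3-transposition groups (Fischer; Cuypers--Hall) together with the fact that $H$ is generated by at most five transpositions, the list of admissible $(H,\widehat{D})$ is finite: for $D_5$ it has a single member, which forces $A\cong Q_3(\eta)$, whereas for $D_4$ and $D_7$ there are several, accounting for the dimension sets $\{7,13,20\}$ and $\{9,12,39,42,30,90\}$ respectively. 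For a fixed candidate I would realize $A$ as a concrete subspace of $\F^{\widehat{D}}$: start from $\{a,\,b+c,\,d+e\}$, repeatedly adjoin products of vectors already obtained until the span stabilizes, and read off $\dim A$. Finally I would impose primitivity. For the single axis $a$ this is automatic, since $a$ is a primitive axis of $M$ and the $1$-eigenspace of $\mathrm{ad}_a$ inside any subalgebra containing $a$ is contained in the one inside $M$, namely $\la a\ra$; for a double axis $x=p+q$ primitivity is the genuine requirement that the $1$-eigenspace of $\mathrm{ad}_x|_A$ equal $\la x\ra$, a finite linear check once $A$ is known. Removing the candidates that fail it leaves precisely the algebras in the theorem.

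The behavior in $\eta$ requires separate bookkeeping. The structure constants of $M_\eta(H,\widehat{D})$, and with them the entries of the matrices whose ranks yield $\dim A$ and the primitivity eigenspaces, are polynomials in $\eta$; generically these ranks are constant, but they may drop at isolated values. The hypothesis $\eta\neq\frac12$ is present from the outset because double axes exist only then, and the split at $\eta=2$ for $D_7$ comes from such a drop in two of the candidate algebras, turning the dimensions $30$ and $90$ into $29$ and $89$. I would locate the responsible minors, factor them over $\F$, and check that $\eta=2$ is the only exceptional value in $\F\setminus\{0,1,\tfrac12\}$.

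I expect the main obstacle to be the case analysis for $D_7$: both pinning down the admissible quotients $(H,\widehat{D})$ and ruling out the rest, and then, in each of the six cases, computing the spanning set of $A$, verifying primitivity of the two double axes, and controlling the degeneracy at $\eta=2$, are substantially heavier than for $D_4$ and $D_5$. Keeping these computations organised so that they remain checkable by hand — which is the whole point of restricting to the seven smaller diagrams — is the delicate part, and is also why the two largest diagrams $D_8$, $D_9$ are deferred to a sequel.
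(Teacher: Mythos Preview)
Your plan is correct and matches the paper's approach: reduce to the finite list of 5-generated 3-transposition groups $G=\langle a,b,c,d,e\rangle$ compatible with each diagram (the paper invokes the Hall--Soicher presentations \cite{hs} rather than Fischer or Cuypers--Hall directly), and for each candidate compute $A$ inside the corresponding Matsuo algebra, checking primitivity and tracking the $\eta$-dependence. The one organisational device you do not mention is the fixed/flip subalgebra framework of \cite{gjmss}, which the paper uses to supply a natural basis of orbit vectors and to obtain primitivity of the double axes automatically (Proposition~7.4 there), but the underlying strategy is the same.
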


In this theorem, we only list the dimensions of algebras corresponding to connected diagrams.
If the diagram is disconnected, then any suitable algebra decomposes as a direct sum of smaller known algebras. We consider all algebras including decomposable ones in detail in Section~\ref{s:main}.
We provide a complete multiplication table for a basis of the algebra for small dimensions.

%

Authors in \cite{gjmss} introduced the general flip  construction
which helps to produce a rich variety of axial algebras of Monster type. 
Flip subalgebras corresponding to various series of 3-transposition groups are extensively studied by many authors. Joshi classified all flip subalgebras for $G=S_n$ and $G=Sp_{2n}(2)$ \cite{Joshi-mas,Joshi-phd}. Alsaeedi completed the $2^{n-1}:S_n$ case \cite{alsaeedi-phd,alsaeedi-paper}. Shi~\cite{shi}  analyzed the case of $O^\pm_{2n}(2)$. The case of $G=SU_n(2)$ was looked
at by Hoffman, Rodrigues, and Shpectorov (yet unpublished work). Generalizing the 3-generated algebras from the theorem, where possible, we construct an infinite series of algebras corresponding to a series of 3-transposition groups to which the group generated by $a$, $b$, $c$, $d$, and $e$ belongs. The resulting infinite series are described in Propositions~\ref{p:tau:W2(Dn)}, \ref{p:fixed:Wr(A_4,n)}, \ref{p:fixed2:Wr(A_4,n)}, \ref{p:fixed:Wr(3^1+2,n)}, and \ref{p:fixed:Wr(3^2,n)}. 

The paper is organized as follows. In Section 2, we
 discuss general facts and definitions on axial algebras,
Matsuo algebras and their fixed and flip subalgebras.
In Section 3, we describe Fischer spaces of some 3-transposition groups.
Finally, in Section 4, we classify all primitive subalgebras for diagrams $D_1$--$D_7$.

\section{Preliminaries}
In this section, we provide the necessary background on axial algebras, then
introduce notion of Matsuo algebras and their fixed and flip subalgebras. 

\subsection{Axial algebras}
Fix a field $\mathbb{F}$. 

\begin{definition} A \emph{fusion law} $\mathcal{F}$ over $\mathbb{F}$ is a pair $(X,\ast)$,
where $X$ is a finite subset of $\F$ and $\ast:X\times X\to 2^X$ is a symmetric map from $X^2$ to the set of all subsets of $X$.
\end{definition}

Consider a commutative $\mathbb{F}$-algebra $A$. 
If $a\in A$, then $ad_a : A\to A$ stands for the adjoint map defined as $u\mapsto au$. For $\lambda\in\mathbb{F}$, denote $A_{\lambda}(a)=\{u\in A~|~au=\lambda u\}$.
If $A_{\lambda}(a)\neq\{0\}$, then it coincides with the $\lambda$-eigenspace of $\operatorname{ad}_a$. For $L\subseteq\mathbb{F}$, set $A_L(a):=\oplus_{\lambda\in L}A_{\lambda}(a)$. 

Suppose that $\mathcal{F}=(X,\ast)$ is a fusion law over $\mathbb{F}$. 

\begin{definition}
A non-zero idempotent $a\in A$ is an \emph{($\mathcal{F}$-)axis} if 
\begin{enumerate}
	\item[(a)] $A=A_X(a)$; and 
	\item[(b)] $A_{\lambda}(a)A_\mu(a)\subseteq A_{\lambda\ast\mu}(a)$ for all 
	           $\lambda,\mu\in X$.
\end{enumerate}
\end{definition}

Condition (a) implies that $\operatorname{ad}_a$ is diagonalizable over $\F$. Since $a\in A_1(a)$, we always assume that $1\in X$. 

\begin{definition}
An axis $a$ is \emph{primitive} if $A_1(a)$ is $1$-dimensional; that is, 
$A_1(a)=\langle a\rangle$.
\end{definition}

If $a$ is a primitive axis and $\lambda\neq0$, then $A_1(a)A_0(a)=0$ and $A_1(a)A_{\lambda}(a)=\langle a\rangle A_{\lambda}(a)=A_{\lambda}(a)$.
This forces us to assume that $1\ast 0=\emptyset$ (if $0\in X$) and $1\ast\lambda=\{\lambda\}$ for all $0\neq\lambda\in X$. 

\begin{definition}
An algebra $A$ over $\F$ is an \emph{($\cF$-)axial} algebra if it is 
generated as algebra by a set of $\cF$-axes. The algebra $A$ is a 
\emph{primitive} \emph{($\cF$-)axial} algebra if it is generated as algebra by a set of primitive $\cF$-axes.
\end{definition}

In this paper, we focus on two types of fusion laws. 
Consider elements $\alpha \not = \beta$ in $\F$ distinct from 
$1$ and $0$. Table \ref{M} shows the fusion law $\mathcal{M}(\alpha,\beta)$.
\begin{table}[h]
\begin{center}
\begin{tabular}{|c||c|c|c|c|}
\hline
$\ast$&$1$&$0$&$\alpha$&$\beta$\\
\hline\hline
$1$&$1$&&$\alpha$&$\beta$\\
\hline
$0$&&$0$&$\alpha$&$\beta$\\
\hline
$\alpha$&$\alpha$&$\alpha$&$1,0$&$\beta$\\
\hline
$\beta$&$\beta$&$\beta$&$\beta$&$1,0,\alpha$\\
\hline
\end{tabular}
\end{center}
\caption{Fusion law $\mathcal{M}(\alpha,\beta)$}\label{M}
\end{table}
Each cell of the table lists the elements of the corresponding 
set $\lambda\ast\mu$. For example, $1\ast 0=\emptyset$ and $\alpha\ast\alpha=\{1,0\}$.
Axial algebras with this fusion law are called \emph{algebras of 
Monster type $(\alpha,\beta)$}. The name is explained by the fact that
these algebras generalize the Majorana algebras of Ivanov~\cite{ivan} and the Griess algebra that are examples of axial algebras of Monster type $(\frac{1}{4},\frac{1}{32})$ having strong connections with the Monster group.

Consider $\eta\in\F$ distinct from $1$ and $0$. Table \ref{Jordan} describes the fusion law $\mathcal{J}(\eta)$. Note that $\mathcal{J}(\eta)$ is a particular case of the Monster fusion laws $\mathcal{M}(\alpha,\beta)$, namely, it corresponds to the case $\beta=\eta$ and $\alpha$-eigenspace is trivial.

\begin{table}[h]
\begin{center}
\begin{tabular}{|c||c|c|c|}
\hline
$\ast$&$1$&$0$&$\eta$\\
\hline\hline
$1$&$1$&&$\eta$\\
\hline
$0$&&$0$&$\eta$\\
\hline
$\eta$&$\eta$&$\eta$&$1,0$\\
\hline
\end{tabular}
\end{center}
\caption{Fusion law $\mathcal{J}(\eta)$}\label{Jordan}
\end{table}

Axial algebras with this fusion law are called \emph{algebras of Jordan type $\eta$}. Observe that for $\eta=1/2$ the fusion law  $\mathcal{J}(\frac{1}{2})$ imitates the Peirce decomposition for an idempotent in a Jordan algebra \cite{Ja68}. 

\subsection{3-transposition groups and Matsuo algebras}

Recall that a \emph{$3$-transposition group} is a pair $(G,D)$, where $G$ is a group generated by a normal subset $D$ of its involutions such that
$|cd|\leq 3$ for every two elements $c,d\in D$. 

The symmetric group $G=S_n$ of degree $n$ together with the set of all transpositions $D=(1,2)^G$ is an example of a 3-transposition group. Finite $3$-transposition groups were classified, under some
restrictions, by Fischer \cite{fischer} and, in complete generality, by Cuypers and Hall \cite{cuypers_hall}.

In many cases, $D$ is unique for a given $G$, so 
we often write $G$ instead of $(G,D)$.

The \emph{Fischer space} of a 3-transposition group 
$(G,D)$ is a point-line geometry $\Gm=\Gm(G,D)$, whose point set is $D$ 
and where distinct points $c$ and $d$ are collinear if and only if $|cd|=3$. 
Observe that any two collinear points $c$ and $d$ lie in a 
unique common line, which consists of $c$, $d$, and the third point 
$e=c^d=d^c$. It follows form the definition that the connected components of the Fischer space $\Gm$ coincide with the conjugacy classes of $G$ contained in $D$. In particular, the Fischer space is connected if and only if $D$ is a single conjugacy class of~$G$.

The $3$-transposition group $(G,D)$ can be recovered from $\Gm$ up to the center of $G$. We say that 
two $3$-transposition groups $(G_1,D_1)$ and $(G_2,D_2)$ have the same central type if the 3-transposition groups $(\overline{G}_i,\overline{D}_i)$ are isomorphic, where $\overline{G}_i=G_i/Z(G_i)$. It is known~\cite{hs} that for each 3-transposition group $(G,D)$, there exists a unique centrally universal 3-transposition group $(\tilde{G},\tilde{D})$ which has all 3-transposition groups with central type that of $(G,D)$ as homomorphic images. We will often identify two 3-transposition groups that have the same central type. 
Following~\cite{hs}, we denote by $F(k,n)$ a centrally universal $k$-generator 3-transposition group whose 3-transposition class has size $n$. Note that for small values of $n$ the parameters $n$ and $k$ define a group $F(k,n)$ uniquely if there exists at least one such group \cite{hs}.

Consider a field $\F$ of characteristic not two and suppose that $\eta\in\F$, $\eta\neq0,1$. 

\begin{definition}
The \emph{Matsuo algebra} $M_\eta(\Gm)$ (or $M_\eta(G,D)$) over $\F$, corresponding to $\Gm$ 
and $\eta$, has the point set $D$ as its basis. Multiplication is defined 
on the basis as follows:
$$c\cdot d=\left\{
\begin{array}{rl}
c,&\mbox{if }c=d;\\
0,&\mbox{if $c$ and $d$ are non-collinear};\\
\frac{\eta}{2}(c+d-e),&\mbox{if $c$ and $d$ lie on a line $\{c,d,e\}$}.
\end{array}
\right.$$ 
\end{definition}
We use the dot for the algebra product to distinguish it from 
the multiplication in the group $G$. 
It follows from \cite[Theorem~6.4]{hrs} that 
any Matsuo algebra $M_\eta(G,D)$ is a primitive axial algebra of Jordan type $\eta$.
The following result shows that this is a basic example if $\eta\neq\frac{1}{2}$.
\begin{proposition}[\cite{hrs,hss}] \label{characterization}
Any primitive axial algebra of Jordan type $\eta\neq\frac{1}{2}$ is a factor algebra of a Matsuo algebra.
\end{proposition}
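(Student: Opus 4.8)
The plan is to reconstruct a $3$-transposition group from $A$ through its Miyamoto involutions and then to realise $A$ as a factor of the associated Matsuo algebra. Fix a set of primitive axes generating $A$. For a primitive axis $a$ of Jordan type $\eta$ the law $\mathcal{J}(\eta)$ is graded by $\mathbb{Z}/2\mathbb{Z}$, with $1,0$ in the trivial and $\eta$ in the non-trivial part; hence the linear map $\tau_a$ acting as $+1$ on $A_1(a)\oplus A_0(a)$ and as $-1$ on $A_\eta(a)$ is an automorphism of $A$, and it is an involution as soon as $A_\eta(a)\neq 0$. Let $G$ be the group generated by the Miyamoto involutions of the chosen axes, let $\mathcal{A}$ be the union of the $G$-orbits of these axes (so $\mathcal{A}$ again consists of primitive axes of Jordan type $\eta$, is $G$-invariant, and $G=\langle\tau_a:a\in\mathcal{A}\rangle$), and put $D=\{\tau_a:a\in\mathcal{A}\}$. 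The identity $\tau_{g(a)}=g\tau_a g^{-1}$ shows that $D$ is a normal subset of $G$.

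The central ingredient is the Sakuma-type classification of $2$-generated primitive axial algebras of Jordan type $\eta\neq\tfrac12$ (see~\cite{hrs}): for $a,b\in\mathcal{A}$ the subalgebra $\langle a,b\rangle$ is isomorphic to $\mathbb{F}$ (when $a=b$), to the $2$-dimensional algebra $2B$ with $a\cdot b=0$, to the Matsuo algebra $3C(\eta)$ on the line $\{a,b,\tau_a(b)\}$, or to a proper quotient of $3C(\eta)$. Granting this, $\langle\tau_a,\tau_b\rangle$ is trivial, cyclic of order $2$, or isomorphic to $S_3$, so $|\tau_a\tau_b|\leq 3$ in all cases; hence, setting aside for the moment the degenerate situation in which some $\tau_a$ is trivial, $(G,D)$ is a $3$-transposition group. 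The same list pins down the product in $A$: one has $a\cdot b=0$ exactly when $\tau_a$ and $\tau_b$ are non-collinear in the Fischer space $\Gamma=\Gamma(G,D)$, and $a\cdot b=\tfrac{\eta}{2}\bigl(a+b-\tau_a(b)\bigr)$ when $\{\tau_a,\tau_b,\tau_a^{\tau_b}\}$ is a line, where $\tau_a^{\tau_b}=\tau_{\tau_b(a)}$ is exactly the Miyamoto involution of the third axis $\tau_a(b)=\tau_b(a)$ of that line.

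It remains to build the homomorphism. One first shows that distinct axes in $\mathcal{A}$ have distinct Miyamoto involutions; this is where the hypothesis $\eta\neq\tfrac12$ is essential, and it follows from the Frobenius form carried by every primitive axial algebra of Jordan type $\eta\neq\tfrac12$ (see~\cite{hss}) together with the classification above. Thus $a\mapsto\tau_a$ is a bijection $\mathcal{A}\to D$, and we may define $\phi\colon M_\eta(G,D)\to A$ on the basis $D$ by $\phi(\tau_a)=a$ and extend linearly. Comparing the Matsuo multiplication with the formulas for $a\cdot b$ established above shows that $\phi$ preserves products, so $\phi$ is an algebra homomorphism; it is surjective since its image contains a generating set of $A$. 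Therefore $A\cong M_\eta(G,D)/\ker\phi$ is a factor algebra of a Matsuo algebra. Finally, if $A_\eta(a)=0$ for some axis $a$, so that $\tau_a$ is trivial, then $A=\mathbb{F}a\oplus A_0(a)$ is a direct sum of ideals, $\mathbb{F}a$ is the one-point Matsuo algebra, and one concludes by induction on the number of generators.

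The step I expect to be the real obstacle is the $2$-generated classification: one projects $b$ onto the eigenspaces of $a$, uses $b^2=b$ and primitivity to constrain the projections severely, and invokes $\eta\neq\tfrac12$ to exclude the further possibilities (such as Jordan algebras of $2\times2$ symmetric matrices) that genuinely appear when $\eta=\tfrac12$. By comparison the two bookkeeping points above — injectivity of $a\mapsto\tau_a$ via the Frobenius form, and the reduction of the decomposable case to smaller algebras — are routine, but still need to be carried out with care.
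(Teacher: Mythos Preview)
The paper does not give its own proof of this proposition; it is simply quoted from the literature (the relevant source is really \cite{hrs1}, with \cite{hss} handling the complementary case $\eta=\tfrac12$). Your sketch follows exactly the strategy used there: close the generating axes under the Miyamoto group, invoke the classification of $2$-generated primitive algebras of Jordan type $\eta\neq\tfrac12$ to see that any two axes span $\mathbb{F}$, $2B$, or $3C(\eta)$ (up to quotient), deduce the $3$-transposition property for the Miyamoto involutions, and read off the Matsuo multiplication. So the approach is the right one and matches the cited proofs.

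Two small remarks on execution. First, your injectivity step is correct but the reason you give is slightly oblique. It follows directly from the $2$-generated classification, without appealing to the Frobenius form: if $a\neq b$ and $\tau_a=\tau_b$, then (as you note) $\langle a,b\rangle\cong 2B$; taking any $e$ with $\tau_a(e)\neq e$, the line through $a$ and $e$ has third point $\tau_a(e)=\tau_b(e)$, so $\{a,e,\tau_a(e)\}$ and $\{b,e,\tau_a(e)\}$ are both $3C$-lines, and since in $3C(\eta)$ the third axis is \emph{determined} by the other two via $a=e+\tau_a(e)-\tfrac{2}{\eta}\,e\cdot\tau_a(e)$, this forces $a=b$. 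Second, the ``induction on the number of generators'' for the degenerate case is a little loose (the generating set need not be finite); it is cleaner to split off \emph{all} axes $a$ with $\tau_a=1$ simultaneously as a direct summand and run the main argument on the complement, where every Miyamoto involution is nontrivial. Neither point is a genuine gap.
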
 

The Matsuo algebra admits a bilinear symmetric form $(\cdot,\cdot)$ that associates with the algebra product, this means $(u\cdot v,w)=(u,v\cdot w)$ for arbitrary algebra elements $u$, $v$, and $w$.
This form is given on the basis $D$  by the following:
$$
(c,d)=\left\{
\begin{array}{rl}
1,&\mbox{ if }c=d;\\
0,&\mbox{ if }|cd|=2;\\
\frac{\eta}{2},&\mbox{ if }|cd|=3.
\end{array}\right.
$$
Such forms play an important role in the study of axial algebras, for example the radical of the form often contains all ideals of the corresponding axial algebra \cite{kms}. In this regard, for a given group of 3-transpositions $(G,D)$, an important question is to find the values of the parameter $\eta$ for which the Gram matrix of the form is degenerate. 
We call the corresponding elements of the field {\it critical values} for $\eta$.
Note that for a given 3-transposition group the critical values can be found using the recent paper \cite{hs2}.

Consider a Matsuo algebra $M_\eta(\Gm)$.
Since non-collinear axes $a$ and $b$ are orthogonal, that is $a\cdot{b}=0$, we see that $x=a+b$ is an idempotent. 
Following~\cite{gjmss}, call such $x$ a {\it double axis}. 
It turned out that double axes obey a nice fusion law.

\begin{proposition}\cite[Theorem~1.1]{gjmss}
If $\eta\neq1/2$, then the double axis $x=a+b$ satisfies the fusion law $\mathcal{M}(2\eta,\eta)$.
\end{proposition}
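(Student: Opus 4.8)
The plan is to verify the fusion law $\mathcal{M}(2\eta,\eta)$ directly by first computing the spectrum of $\operatorname{ad}_x$ on $M_\eta(\Gm)$, where $x=a+b$ with $a,b$ non-collinear axes, and then checking the products of eigenspaces cell by cell against Table~\ref{M}. First I would use the known Peirce decomposition of the individual axes $a$ and $b$: since $M_\eta(\Gm)$ is of Jordan type $\eta$, we have $M=M_1(a)\oplus M_0(a)\oplus M_\eta(a)$ and similarly for $b$. Because $a\cdot b=0$, the adjoints $\operatorname{ad}_a$ and $\operatorname{ad}_b$ commute (this needs a short argument: $a\cdot b=0$ implies $b\in M_0(a)$, and one checks $\operatorname{ad}_a\operatorname{ad}_b=\operatorname{ad}_b\operatorname{ad}_a$ using that $M_0(a)$ is a subalgebra and the fusion rules), so $M$ decomposes into simultaneous eigenspaces $M_{\lambda,\mu}=M_\lambda(a)\cap M_\mu(b)$ for $\lambda,\mu\in\{1,0,\eta\}$. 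On $M_{\lambda,\mu}$ the operator $\operatorname{ad}_x=\operatorname{ad}_a+\operatorname{ad}_b$ acts as the scalar $\lambda+\mu$.

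Next I would identify which simultaneous eigenspaces are nonzero and compute the eigenvalues of $\operatorname{ad}_x$. The key point is that $a\in M_1(a)$ and $a\cdot b=0$ forces $a\in M_0(b)$, so $a\in M_{1,0}$; symmetrically $b\in M_{0,1}$; hence $x=a+b$ lies in $M_{1,0}\oplus M_{0,1}$, both of whose vectors have $\operatorname{ad}_x$-eigenvalue $1$, consistent with $x$ being idempotent. The possible eigenvalues of $\operatorname{ad}_x$ coming from $\lambda+\mu$ with $\lambda,\mu\in\{1,0,\eta\}$ are $2,1,0,2\eta,1+\eta,\eta$; one shows $M_{1,1}=0$ (it would contain a vector fixed by both $a$ and $b$, impossible given primitivity and $a\cdot b=0$ — more precisely $M_1(a)=\langle a\rangle$ and $a\notin M_1(b)$) so the eigenvalue $2$ does not occur, and similarly the eigenvalue $1+\eta$ arising from $M_{1,\eta}\oplus M_{\eta,1}$ must be shown to either be absent or folded appropriately; in fact the correct statement is that the $1$-eigenspace of $x$ is spanned by $x$ together with $a-b$ (the primitivity of $x$ as a double axis is a separate, weaker notion), and the $2\eta$-eigenspace is $M_{\eta,\eta}$ while the $\eta$-eigenspace is $M_{\eta,0}\oplus M_{0,\eta}$, and the $1+\eta$ contributions from $M_{1,\eta}\oplus M_{\eta,1}$ get grouped into the $\eta$-part after subtracting the $a,b$ directions — this regrouping is exactly the subtle bookkeeping the proof must get right.

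Then I would verify condition (b) of the axis definition: for each pair of eigenvalues $\nu_1=\lambda_1+\mu_1$, $\nu_2=\lambda_2+\mu_2$ of $\operatorname{ad}_x$, the product $M_{\nu_1}(x)\cdot M_{\nu_2}(x)$ lies in $\bigoplus M_{\nu}(x)$ for the $\nu$'s prescribed by $\mathcal{M}(2\eta,\eta)$. This reduces, via the simultaneous decomposition, to the Jordan fusion law $\mathcal{J}(\eta)$ applied in each coordinate: $M_{\lambda_1,\mu_1}\cdot M_{\lambda_2,\mu_2}\subseteq M_{\lambda_1\ast\mu_1',\,\mu_1\ast\mu_2}$ where $\ast$ is the $\mathcal{J}(\eta)$-fusion. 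So the hardest cell, $2\eta\ast 2\eta$, corresponds to $M_{\eta,\eta}\cdot M_{\eta,\eta}\subseteq (M_1(a)\oplus M_0(a))\cap(M_1(b)\oplus M_0(b))$, and one must check the resulting eigenvalues of $\operatorname{ad}_x$ land in $\{1,0,2\eta\}$ — but the $(1,1)$-component vanishes as noted, and the $(1,0),(0,1)$-components give eigenvalue $1$, $(0,0)$ gives $0$, $(\eta,\eta)$ gives $2\eta$; so this is fine once $M_{1,1}=0$ is established. The genuinely delicate point, and the one I expect to be the main obstacle, is the $\eta\ast\eta$ and $\eta\ast 2\eta$ cells together with the precise description of $M_1(x)$: one needs that $M_1(x)$ is \emph{not} simply $\langle x\rangle$ (the double axis is generally not primitive) and to correctly allocate the spaces $M_{1,\eta}, M_{\eta,1}$ — handling these requires computing actual products like $a\cdot u$ for $u\in M_\eta(b)$ using the explicit Matsuo multiplication on the basis, and checking that the spurious $(1+\eta)$-eigenvalue either does not arise or is accounted for; this is where appealing to the explicit basis formula for $c\cdot d$ and the incidence structure of $\Gm$ (how lines through $a$ and $b$ interact) is unavoidable. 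Once that allocation is pinned down, the remaining cells are routine applications of $\mathcal{J}(\eta)$ in each coordinate, and I would simply tabulate them.
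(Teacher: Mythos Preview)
The paper does not give a proof of this proposition; it is quoted as \cite[Theorem~1.1]{gjmss} and used as a black box, so there is no argument here to compare against. Your overall strategy---simultaneously diagonalise $\operatorname{ad}_a$ and $\operatorname{ad}_b$, identify the $\operatorname{ad}_x$-eigenspaces with the joint pieces $M_{\lambda,\mu}=M_\lambda(a)\cap M_\mu(b)$, and read off the Monster fusion from the Jordan fusion applied in each coordinate---is the standard one and is essentially how the result is proved in \cite{gjmss}.

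You have, however, manufactured a difficulty that is not there. The spaces $M_{1,\eta}$ and $M_{\eta,1}$ are simply zero: by primitivity $M_1(a)=\langle a\rangle$, and since $b\cdot a=0$ we have $a\in M_0(b)$; as $\eta\neq 0$ this forces $\langle a\rangle\cap M_\eta(b)=0$, and symmetrically $M_{\eta,1}=0$. Hence the eigenvalue $1+\eta$ never occurs, there is no ``regrouping'' or ``subtle bookkeeping'' to perform, and no appeal to explicit Matsuo products or the incidence of lines through $a$ and $b$ is needed. With $M_{1,1}=M_{1,\eta}=M_{\eta,1}=0$ established, every cell of $\mathcal{M}(2\eta,\eta)$---including the $\eta\ast\eta$ and $2\eta\ast\eta$ cells you flagged as delicate---follows immediately from $\mathcal{J}(\eta)$ in each coordinate; for instance $M_{\eta,\eta}\cdot M_{0,\eta}\subseteq M_{\eta,\{1,0\}}=M_{\eta,1}\oplus M_{\eta,0}=M_{\eta,0}$, giving $2\eta\ast\eta\subseteq\{\eta\}$ with no residue.

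One point you did gloss over deserves an actual argument: that $\operatorname{ad}_a$ and $\operatorname{ad}_b$ commute does not follow merely from ``$M_0(a)$ is a subalgebra and the fusion rules''. It is a consequence of the Seress lemma for primitive Jordan-type axes (if $z\in M_1(a)\oplus M_0(a)$ then $a\cdot(z\cdot v)=z\cdot(a\cdot v)$ for all $v$); taking $z=b\in M_0(a)$ gives the commutation. Alternatively one can verify it on the basis $D$ of the Matsuo algebra by a short case analysis on the collinearity of a point $c$ with $a$ and with $b$, but either way a proof is required here.
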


The concept of double axes is opposed to single axes, which are the elements of $D$. As was mentioned above single axes also satisfy the above law, with the $2\eta$-eigenspace trivial. Thus, any subalgebra in $M_\eta(\Gm)$ generated by a collection of single and double axes is an axial algebra of Monster type $(2\eta,\eta)$. Observe that the double axis $x=a+b$ is not primitive in the whole Matsuo algebra, because its 1-eigenspace is 2-dimensional, equal to $\langle a, b\rangle$. However, $x$ can be primitive in a proper subalgebra of $M_\eta(\Gm)$.

In~\cite{gjmss}, authors started to classify all primitive 2- and 3-generated subalgebras $M$ of Matsuo algebras, where each generating element is either a single axis or a double axis.
Following~\cite{gjmss}, we say that $M$ has type $D$ if it is generated by a single axis $a$ and two  double axes $b+c$ and $d+e$. The diagram on the support set $\{a, b, c, d, e\}$ is the graph on this set in which two distinct points are connected by an edge when they are collinear in the Fischer space $\Gm$. It is proved in~\cite{gjmss} that the diagram is one of nine depicted in~Figure~\ref{Type D} (up to permutation of points).

\begin{figure}
\begin{center}
\includegraphics[scale=0.08]{
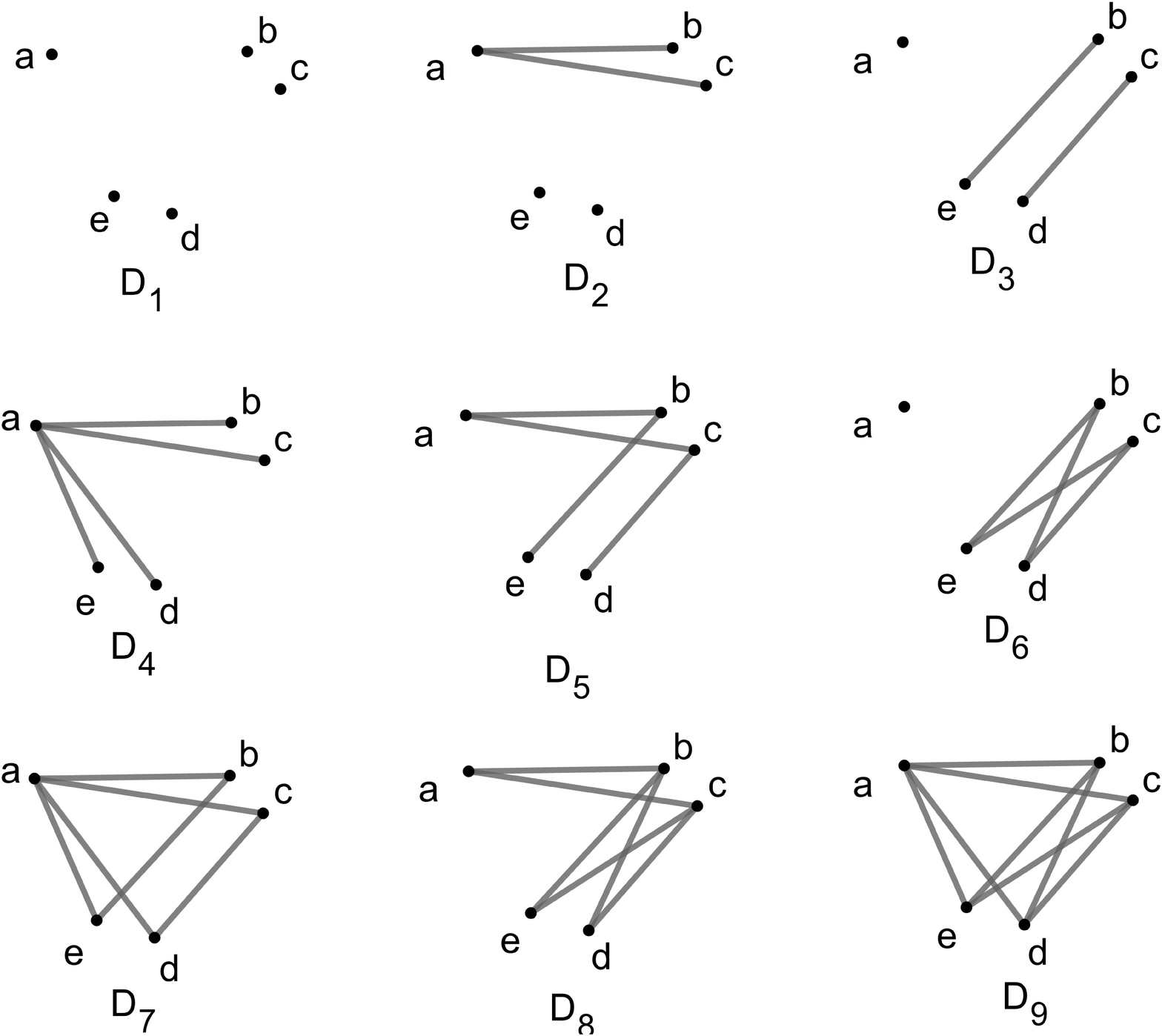}
\caption{Diagrams for algebras of $D$-type}\label{Type D}
\end{center}
\end{figure}

\subsection{Grading and the Miyamoto group}

It turns out that axial algebras of Monster and Jordan types are graded algebras.
\begin{definition} Suppose $\mathcal{F}=(X,\ast)$ is a fusion law. A grading of $\mathcal{F}$ by an abelian group $T$ is a partition $\{X_t~|~t\in T\}$ of the set $X$ (parts $X_t$ are allowed to be empty), such that, for all $\lambda, \mu\in X$, if $\lambda\in X_t$ and $\mu\in X_{t'}$ then $\lambda\ast\mu$ is contained in $X_{tt'}$.
\end{definition}

Observe that the fusion laws $\mathcal{M}(\alpha,\beta)$ and $J(\eta)$ are $C_2$-graded, where $C_2=\{1,-1\}$. When $\mathcal{F}=\mathcal{M}(\alpha,\beta)$ or $\mathcal{F}=\mathcal{J}(\eta)$, we have $X_1=\{1,0, \alpha\}$ and $X_{-1}=\{\beta\}$ in the first case and $X_1=\{1,0\}$ and $X_{-1}=\{\eta\}$ in the second case.

If $a$ is an $\mathcal{F}$-axis in an algebra $A$, where $\mathcal{F}$ is $C_2$-graded, then define a linear 
mapping $\tau_a:A\rightarrow A$ in the following way: $\tau_a|_{A_{X_1}(a)}=1_{A_{X_1}(a)}$ and 
$\tau_a|_{A_{X_{-1}}(a)}=-1_{A_{X_{-1}}(a)}$. The grading implies that $\tau_a$ is an involutive automorphism of $A$ known as the {\it Miyamoto involution}.
The {\it Miyamoto group} is the subgroup of $\Aut(A)$ generated by all Miyamoto involutions $\tau_a$, where $a$ runs over the set of $\mathcal{F}$-axes of $A$. 

We want to emphasize an important property of Miyaomoto involutions, which follows from the definition: if an $\mathcal{F}$-axis $a$ of an axial algebra $A$ is also an $\mathcal{F}$-axis in an axial subalgebra $B$, then $B$ is invariant under the action of $\tau_a$.

There are ways to associate an automorphism with an axis for an arbitrary abelian group $T$ (see \cite{gjmss}).

Consider a Matsuo algebra $M_\eta(G,D)$ and an axis $a\in D$. The action of the Miyamoto involution $\tau_a$ on $M_\eta(G,D)$ agrees with the conjugation group action of $a$ on $D$. Therefore, the Miyamoto group of $M_\eta(G,D)$ is isomorphic to the factor group $G/Z(G)$. The action of the Miyamoto involution $\tau_a$ can be interpreted in terms of the Fischer space $\Gamma=\Gamma(G,D)$. Namely, $\tau_a$ fixes $a$ and all points non-collinear with $a$ and it switches the two points other than $a$ on any line through $a$. 

Since the fusion law $\mathcal{M}(2\eta, \eta)$ is $C_2$-graded, each double axis $x$ in a Matsuo algebra defines the Miyamoto involution $\tau_x$. 

\begin{proposition}\cite[Proposition~4.10]{gjmss}
If $x=a+b$ is a double axis, then $\tau_x=\tau_a\tau_b$.
\end{proposition}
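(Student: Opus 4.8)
The plan is to show that the two automorphisms $\tau_x$ and $\tau_a\tau_b$ induce the same splitting of $M=M_\eta(\Gm)$ into their $(+1)$- and $(-1)$-eigenspaces. Throughout I work in the full Matsuo algebra $M$, where $a$ and $b$ are primitive axes of Jordan type $\eta$ and $x=a+b$ is the given double axis; recall that $x$ satisfies the fusion law $\mathcal{M}(2\eta,\eta)$ by~\cite{gjmss}, so $\operatorname{ad}_x$ is semisimple with spectrum contained in $\{1,0,2\eta,\eta\}$ and $\tau_x$ is $+1$ on $M_{\{1,0,2\eta\}}(x)$ and $-1$ on $M_\eta(x)$.

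The starting point is that $a\cdot b=0$ gives $b\in M_0(a)$ and $a\in M_0(b)$. Since $0$ lies in the $+1$-part $\{1,0\}$ of the $C_2$-grading of $\mathcal{J}(\eta)$, it follows that $\tau_a$ fixes both $a$ and $b$, and symmetrically $\tau_b$ fixes both; in particular $\tau_a(x)=\tau_b(x)=x$. Using the conjugation rule $\varphi\tau_c\varphi^{-1}=\tau_{\varphi(c)}$ for $\varphi\in\Aut(M)$, one gets that $\tau_a$ and $\tau_b$ commute (so $\tau_a\tau_b$ is again an involution), that $\operatorname{ad}_x$ commutes with both $\tau_a$ and $\tau_b$, and that $\operatorname{ad}_a$ commutes with $\tau_b$ while $\operatorname{ad}_b$ commutes with $\tau_a$. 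Hence $M$ splits into the four common eigenspaces of the commuting involutions $\tau_a,\tau_b$, namely
\begin{gather*}
M^{++}=M_{\{1,0\}}(a)\cap M_{\{1,0\}}(b),\quad M^{+-}=M_{\{1,0\}}(a)\cap M_\eta(b),\\
M^{-+}=M_\eta(a)\cap M_{\{1,0\}}(b),\quad M^{--}=M_\eta(a)\cap M_\eta(b),
\end{gather*}
and $\tau_a\tau_b$ acts on $M^{\varepsilon\delta}$ as the scalar $\varepsilon\delta$. Moreover $\operatorname{ad}_x$ preserves each $M^{\varepsilon\delta}$, and $\operatorname{ad}_a$ preserves $M_\eta(b)$ and $M_{\{1,0\}}(b)$ (and symmetrically for $\operatorname{ad}_b$).

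Now I would locate each summand inside the eigenspaces of $\operatorname{ad}_x$. On $M^{--}$ both $\operatorname{ad}_a$ and $\operatorname{ad}_b$ act as $\eta$, so $\operatorname{ad}_x=2\eta\cdot\mathrm{id}$ there and $M^{--}\subseteq M_{2\eta}(x)$. For $v\in M^{+-}$, decompose $v$ along the $\operatorname{ad}_a$-eigenspaces; since $\operatorname{ad}_a$ preserves $M_\eta(b)$ each component lies in $M_\eta(b)$, and the $M_1(a)$-component lies in $M_1(a)\cap M_\eta(b)=\langle a\rangle\cap M_\eta(b)=0$ by primitivity and $a\in M_0(b)$, so $v\in M_0(a)$ and $\operatorname{ad}_x v=\eta v$; hence $M^{+-}\subseteq M_\eta(x)$, and symmetrically $M^{-+}\subseteq M_\eta(x)$. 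For $v\in M^{++}$, the $M_1(a)$- and $M_1(b)$-components of $v$ are scalar multiples of $a$ and $b$ by primitivity, so $\operatorname{ad}_x v=\operatorname{ad}_a v+\operatorname{ad}_b v\in\langle a,b\rangle\subseteq M_1(x)$; therefore $\operatorname{ad}_x(\operatorname{ad}_x-\mathrm{id})v=0$, and since the only $\mu\in\{1,0,2\eta,\eta\}$ with $\mu(\mu-1)=0$ are $0$ and $1$ (this is exactly where $\eta\neq 0,1,\tfrac12$ is used), $v\in M_1(x)\oplus M_0(x)\subseteq M_{\{1,0,2\eta\}}(x)$.

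Putting this together, $M^{++}\oplus M^{--}\subseteq M_{\{1,0,2\eta\}}(x)$ and $M^{+-}\oplus M^{-+}\subseteq M_\eta(x)$; since also $M=M_{\{1,0,2\eta\}}(x)\oplus M_\eta(x)$, the elementary fact that $V=U_1\oplus U_2=W_1\oplus W_2$ with $U_i\subseteq W_i$ forces $U_i=W_i$ (no finiteness needed) yields $M_{\{1,0,2\eta\}}(x)=M^{++}\oplus M^{--}$ and $M_\eta(x)=M^{+-}\oplus M^{-+}$. On the first space both $\tau_x$ and $\tau_a\tau_b$ act as $+1$ (for $\tau_a\tau_b$ because $\varepsilon\delta=+1$ on $M^{++}$ and on $M^{--}$), on the second both act as $-1$, and these two spaces span $M$; hence $\tau_x=\tau_a\tau_b$. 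The one step needing genuine care is the $M^{++}$ piece, where $\operatorname{ad}_x$ is not scalar: the trick there is to push $v$ into $\langle a,b\rangle$ under $\operatorname{ad}_x$ and then invoke semisimplicity together with $\eta\neq\tfrac12$; the other three pieces reduce to $\operatorname{ad}_x$ acting as a scalar once the commuting-grading observations of the second paragraph are in place.
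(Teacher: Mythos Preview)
Your argument is correct. Note, however, that the present paper does not give its own proof of this proposition; it simply quotes the result from \cite[Proposition~4.10]{gjmss}, so there is no in-paper proof to compare against directly.

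That said, your route is almost certainly different in spirit from the original. In the Matsuo setting one has the concrete description that $\tau_c$ acts on the basis $D$ as conjugation by $c\in G$, so $\tau_a\tau_b$ is conjugation by $ab$; the proof in \cite{gjmss} of the fusion law for $x=a+b$ proceeds by explicitly listing the $\operatorname{ad}_x$-eigenspaces in terms of orbits of $\la a,b\ra$ on $D$, from which $\tau_x$ can be read off and matched with conjugation by $ab$. Your proof is more abstract: it uses only that $a,b$ are primitive orthogonal $\mathcal{J}(\eta)$-axes and that $x$ satisfies $\mathcal{M}(2\eta,\eta)$, never touching the basis $D$. The payoff is generality---your argument would apply verbatim in any axial algebra where two orthogonal primitive Jordan-type axes sum to a Monster-type axis---at the cost of the one genuinely delicate step you flagged, namely handling $M^{++}$ via $\operatorname{ad}_x(\operatorname{ad}_x-\mathrm{id})=0$ and the semisimplicity of $\operatorname{ad}_x$ together with $\eta\neq\tfrac12$.
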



\subsection{Fixed and flip subalgebras}
In this subsection, we define fixed and flip subalgebras in Matsuo algebras. These concepts were  introduced in~\cite{gjmss}.

Let $(G,D)$ be a $3$-transposition group and $\Gm$ be its Fischer space. Consider a Matsuo algebra $M=M_\eta(\Gamma)$ over a field $\F$. The group $\Aut\Gamma$ acts on $M$ and it is the full group of 
automorphisms of $M$ preserving the basis $D$. 

\begin{definition} If $H\leq\Aut\Gamma$, then
the \emph{fixed subalgebra} of $H$ in $M$ is defined as
$$M_H:=\{u\in M\mid u^h=u\mbox{ for all }h\in H\}.$$
\end{definition}

Each fixed subalgebra $M_H$ has a natural basis.

\begin{proposition}\cite[Proposition~7.3]{gjmss}\label{fixed-basis}
Let $O_1,\ldots,O_k$ be all the $H$-orbits on $D$. Then the orbit vectors 
$u_i:=\sum_{c\in O_i}c$ form a basis of the fixed subalgebra $M_H$.
\end{proposition}

Since we are interested in single and double axes, we will focus on the case when the order of $H$ is two.

\begin{definition}
Consider an element $\tau\in\Aut\Gm$ with $|\tau|=2$. Denote $H=\la\tau\ra$. Then 
the $H$-orbits $O_1,\ldots,O_k$ have each length $1$ or $2$. Let us classify 
the orbit vectors $u_i$ into three groups:
\begin{itemize}
\item $u_i=a\in D$, corresponding to orbits $O_i=\{a\}$ of length $1$;
\item $u_i=a+b$, corresponding to orbits $O_i=\{a,b\}$ with $ab=0$ (orthogonal 
orbits);
\item $u_i=a+b$, with $O_i=\{a,b\}$ satisfying $ab\neq 0$ (non-orthogonal 
orbits). 
\end{itemize}

We call the $u_i$ of the first kind \emph{singles}, of the second kind 
\emph{doubles}, and of the third kind \emph{extras}. 

\end{definition}

Recall that double axes in $M$ are not primitive.
On the other hand, if a double axis lies in $M_H$, then it is primitive in $M_H$.
\begin{proposition}\cite[Prop.~7.4]{gjmss}
Each double $u_i=a+b$ is a double axis and it is 
primitive in $M_H$.  In fact, the set of doubles consists of all double axes that are contained in $M_H$ and are primitive in it.
\end{proposition}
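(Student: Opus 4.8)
The statement to prove is Proposition (from \cite[Prop.~7.4]{gjmss}): \emph{Each double $u_i=a+b$ is a double axis and it is primitive in $M_H$. In fact, the set of doubles consists of all double axes that are contained in $M_H$ and are primitive in it.}

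\medskip

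\noindent\textbf{Proof proposal.}

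The plan is to split the statement into three assertions and handle them in turn. First, that each double $u_i = a+b$ is a double axis: this is almost immediate from the definition, since $O_i = \{a,b\}$ being an orthogonal orbit means $a\cdot b = 0$, so $a$ and $b$ are non-collinear axes in $D$, and $x := a+b$ is then an idempotent in $M$; by the earlier discussion (and Proposition~\cite[Theorem~1.1]{gjmss}) it is a double axis satisfying the fusion law $\mathcal{M}(2\eta,\eta)$. The only point needing a word is that $x$ actually lies in $M_H$: since $H = \langle\tau\rangle$ permutes $O_i = \{a,b\}$, it fixes the sum $a+b$, so indeed $x\in M_H$. Note also that $x$ is an $\mathcal{F}$-axis in $M_H$ in the sense of the fusion-law axioms, because $M_H$ is a $\tau$-invariant subalgebra and the eigenspace decomposition of $\operatorname{ad}_x$ on $M_H$ is the restriction of the decomposition on $M$ (each eigenspace of $\operatorname{ad}_x$ on $M_H$ is the intersection with $M_H$ of the corresponding eigenspace on $M$), so closure under fusion is inherited.

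\medskip

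Second, that $x = a+b$ is \emph{primitive} in $M_H$, i.e.\ $(M_H)_1(x) = \langle x\rangle$. The key computation is to identify the $1$-eigenspace of $\operatorname{ad}_x$ on the whole of $M$. Because $a\cdot b = 0$, one has $x\cdot a = a\cdot a + b\cdot a = a$ and similarly $x\cdot b = b$, so $\langle a,b\rangle \subseteq M_1(x)$; conversely, using the Peirce/fusion structure of the two commuting primitive idempotents $a$ and $b$ in the Jordan-type algebra $M$ (the $1$-eigenvalue of $\operatorname{ad}_{a+b}$ can only come from vectors that are $1$-eigenvectors for $a$ and $0$-eigenvectors for $b$, or vice versa, or sums thereof, and $a$, $b$ are primitive so $M_1(a) = \langle a\rangle$, $M_1(b) = \langle b\rangle$), we get $M_1(x) = \langle a,b\rangle$, which is $2$-dimensional — this is exactly the non-primitivity of $x$ in $M$ already noted in the excerpt. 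Now intersect with $M_H$: $(M_H)_1(x) = M_1(x)\cap M_H = \langle a,b\rangle \cap M_H$. Since $\tau$ swaps $a$ and $b$, the fixed subspace of $\langle a,b\rangle$ under $\tau$ is spanned by $a+b = x$ alone (the element $a-b$ is negated, and $a, b$ individually are not fixed as $a\neq b$); hence $(M_H)_1(x) = \langle x\rangle$, giving primitivity. I expect this eigenspace identification — pinning down $M_1(a+b) = \langle a,b\rangle$ cleanly from the Jordan fusion law for two orthogonal primitive axes — to be the main technical point, though it is standard.

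\medskip

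Third, the converse: every double axis $y$ that lies in $M_H$ and is primitive in $M_H$ arises as one of the doubles $u_i$. Write $y = c+d$ with $c, d \in D$, $c\cdot d = 0$; by Proposition~\ref{fixed-basis} every element of $M_H$ is a unique linear combination of the orbit vectors, and since $y$ has $\{0,1\}$-coefficients on the basis $D$ with support $\{c,d\}$, its expansion in orbit vectors forces the $\tau$-orbit of $c$ to be contained in $\{c,d\}$ and likewise for $d$; so either $\tau$ fixes both $c$ and $d$ (then $y = u_i + u_j$ is a sum of two singles), or $\tau$ swaps $c$ and $d$ (then $y = u_i$ is a single orbit vector on an orbit $\{c,d\}$, which is orthogonal since $c\cdot d = 0$, i.e.\ $y$ is a double). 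In the first case $M_1(y) \cap M_H \supseteq \langle c, d\rangle$, which is $2$-dimensional (both $c$ and $d$ are $\tau$-fixed), contradicting primitivity of $y$ in $M_H$. Hence the second case holds and $y$ is a double $u_i$. Assembling the three parts completes the proof. The forward direction is essentially bookkeeping with the orbit-vector basis; the only place one must be slightly careful is ruling out the "sum of two singles" possibility, which is precisely where the primitivity hypothesis on $y$ in $M_H$ is used.
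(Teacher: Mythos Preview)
The paper does not supply its own proof of this proposition: it is quoted verbatim from \cite[Prop.~7.4]{gjmss} and simply cited. So there is no in-paper argument to compare against, and your proposal should be judged on its own merits.

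Your argument is correct and is essentially the standard one. A few remarks. In Part~2 you invoke a joint eigenspace decomposition for $\operatorname{ad}_a$ and $\operatorname{ad}_b$; this is valid but you can avoid assuming simultaneous diagonalizability by arguing directly with the fusion law: write $v\in M_1(a+b)$ as $v=\alpha a+v_0+v_\eta$ relative to $a$, use $b\in M_0(a)$ and the Jordan fusion rules to see that $b\cdot v_0\in M_0(a)$ and $b\cdot v_\eta\in M_\eta(a)$, and then compare components in $(a+b)\cdot v=v$. The $M_0(a)$-component forces $v_0\in M_1(b)=\langle b\rangle$, and the $M_\eta(a)$-component gives $b\cdot v_\eta=(1-\eta)v_\eta$; since $1-\eta\notin\{0,1,\eta\}$ when $\eta\neq\tfrac12$, this forces $v_\eta=0$. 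This is exactly the computation your sketch is pointing at, and the paper's own text already records the conclusion $M_1(a+b)=\langle a,b\rangle$ as a known fact. Part~3 is clean: the support argument on the basis $D$ is the right way to see that a $\tau$-fixed double axis must have $\{c,d\}$ a union of $\tau$-orbits, and primitivity kills the two-singles case.
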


Since singles and doubles are primitive axes in $M_H$, they generate axial subalgebras of Monster type $(2\eta,\eta)$ inside $M_H$. This leads to the following definition. 

\begin{definition} If $\tau$ is an involution in $\Aut\Gm$ and $H=\la\tau\ra$, then the subalgebra of $M_H$ generated by all singles and all doubles is called a flip subalgebra.
\end{definition} 

In general, a flip subalgebra can be smaller than the corresponding fixed subalgebra.

\section{Some 3-transposition groups and their Fischer spaces}\label{sec:3}
In this section, we describe Fischer spaces of some 3-transposition groups. First, we introduce 3-transposition groups that play an important role throughout the paper.

Recall that the spherical Coxeter groups with simply laced diagrams $A_n$ and  $D_n$ are examples of 3-transposition groups. They are denoted by $W(A_n)\simeq S_{n+1}$ and $W(D_n)\simeq 2^{n-1}:S_n$, respectively.
Denote by $\Lambda=\mathbb{Z}^n$ the root lattice. Then the affine groups $W(\tilde{A}_n)$ and $W(\tilde{D}_n)$ are the extensions of $\Lambda$ by the corresponding spherical group. Denote $W_p(\tilde{\Phi}_n)=W(\tilde{\Phi}_n)/p\Lambda$ for $p=2,3$ and $\Phi=A,D$. Then the resulting groups $W_2(\tilde{A}_n)\simeq2^n:S_{n+1}\simeq W(D_{n+1})$, $W_3(\tilde{A}_n)\simeq3^n:S_{n+1}$, and $W_p(\tilde{D}_n)\simeq p^n:2^{n-1}:S_n$ are 3-transposition groups \cite{cuypers_hall}. There is a common approach to all these groups.

Denote the base group of the restricted wreathed product $G= T\operatorname{wr}S_n$ by $B$, that is $B=T^n$.  The natural injection $\iota_i$ of $T$ as the $i$-th direct factor $T_i$ of $B$ is given by $\iota_i(t) =t_i$, where $1\leq i\leq n$. The projection $\pi_i$ of $B$ onto $T$
induced by the $i$-th factor is given by $\pi_i(b)=b(i)$. We identify $S_n$ with the complement to $B$ in
$G$ which acts naturally on the indices from $\{1,\ldots,n\}$. Let $Wr(T,n)$ be the subgroup $\langle d^G\rangle$ of $G$, where $d$ is a transposition of the complement to $B$. Note that in general situation $Wr(T,n)$ can be a proper subgroup of $G$. The following statement describes when $d^G$ is a class of 3-transpositions.
\begin{proposition}\cite[Theorem~6]{zara88},
\cite[Prop.~8.1]{hall93}\label{p:w(k,n)}
Suppose that $T$ is a group and $G=T\operatorname{wr}S_n$. Fix a transposition $d$ of $S_n$. Then $d^G$ is a class of 3-transpositions 
if and only if each element of $T$ has order 1, 2, or 3. 
\end{proposition}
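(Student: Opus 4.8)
The plan is to reduce the statement about the conjugacy class $d^G$ in $G=T\operatorname{wr}S_n$ to a local computation in a bounded number of coordinates. First I would note that any conjugate of $d$ is a transposition-like element of the form $s_{ij}\cdot b$, where $s_{ij}$ is a transposition of the complement $S_n$ interchanging indices $i$ and $j$, and $b$ is supported on the two coordinates $i,j$ (this is because $d$ itself is such an element with trivial $b$, and conjugating by $B$ only modifies the base-group part in those two coordinates, while conjugating by $S_n$ permutes which pair of indices is used). Concretely, $d^{x}$ for $x\in B$ has the form $s_{ij}\,t_i\,t_j^{-1}$ for a suitable $t\in T$ after normalising. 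So every pair of elements of $d^G$ together involves at most four of the $n$ coordinates, and to understand $|uv|$ for $u,v\in d^G$ it suffices to work inside $T\operatorname{wr}S_m$ for $m\le 4$; the orders of products can only decrease (or stay the same) when extra trivial coordinates are added, so nothing is lost.

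The forward direction ($d^G$ a class of $3$-transpositions $\Rightarrow$ every element of $T$ has order $1$, $2$, or $3$) is the easy half: given $t\in T$, consider the two conjugates $u=s_{12}$ and $v=s_{12}\,t_1 t_2^{-1}$, both in $d^G$, and compute $uv$. A direct multiplication shows $uv$ lies in the base group and has the form $t_1 t_2^{-1}$ conjugated appropriately, so $|uv|$ equals the order of $t$ in $T$ (up to the obvious factor-of-two bookkeeping, but arranging the two transpositions to share a support makes the product land in $T$ with order exactly $|t|$). Since $d^G$ is a class of $3$-transpositions we must have $|uv|\le 3$, hence $|t|\in\{1,2,3\}$. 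The element $t$ was arbitrary, so $T$ is a group of exponent dividing $6$ with all elements of order $1,2,3$.

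For the converse, assume every element of $T$ has order $1,2$, or $3$ and take two arbitrary conjugates $u,v\in d^G$. By the reduction above we may assume they live in $T\operatorname{wr}S_m$ with $m\le 4$, write $u=s_{ij}\,a$, $v=s_{kl}\,b$ with $a,b$ supported on $\{i,j\}$ and $\{k,l\}$ respectively, and split into the three cases according to $|\{i,j\}\cap\{k,l\}|\in\{0,1,2\}$. In the disjoint case the transpositions commute and $uv$ has order $2$. In the case of overlap in two indices, $s_{ij}=s_{kl}$ and $uv$ lies in the base group, where its order is the order of an element of $T$ built from $a,b$, hence $1$, $2$, or $3$ by hypothesis. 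The only genuinely interesting case is a single common index, say indices $\{1,2\}$ and $\{2,3\}$: then $s_{12}s_{23}$ is a $3$-cycle on $\{1,2,3\}$, and a careful bookkeeping of how the base-group parts get permuted by this $3$-cycle shows $(uv)^3$ is a base-group element whose three coordinates are a word in conjugates of elements of $T$ of order dividing $3$ — one checks it reduces to the identity using that in each coordinate the relevant product of $\le 3$ elements of order dividing $3$ collapses (this is where the hypothesis "order $1,2,3$" is used in full, since order $2$ is excluded from contributing here and order $3$ behaves well under the $3$-cycle twist). The main obstacle is precisely this single-overlap case: one must track the action of the $3$-cycle on the three relevant copies of $T$ and verify the telescoping that forces $|uv|\mid 6$, and more precisely $\le 3$; this is a finite but slightly delicate computation in $T\operatorname{wr}S_3$, and I expect the cleanest route is to exhibit an explicit homomorphic image or to invoke the classification of the groups $W_p(\tilde\Phi_n)$ recalled just above, identifying $Wr(T,n)$ with one of $W_2(\tilde D_n)$ or $W_3(\tilde A_n)$-type groups when $|T|=2$ or $3$ and arguing by the known $3$-transposition structure there, then handling general $T$ of exponent dividing $6$ by the coordinate reduction.
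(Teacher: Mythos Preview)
The paper does not give its own proof of this proposition: it is quoted from Zara and Hall with citations and used as a black box (indeed, Lemma~\ref{l:lines} immediately afterwards appeals to it rather than re-deriving it). So there is no ``paper's proof'' to compare against; I will simply assess your argument.

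Your overall architecture---reducing to at most four coordinates and splitting according to $|\{i,j\}\cap\{k,l\}|\in\{0,1,2\}$---is the standard and correct one, and your forward direction and your disjoint and same-support cases are fine. The problem is that you have located the difficulty in the wrong case. You call the single-overlap case ``the only genuinely interesting case'' and ``the main obstacle'', and you propose to resolve it by appealing to the $W_p(\tilde\Phi_n)$ classification. In fact that case is automatic and does \emph{not} use the hypothesis on $T$ at all. Writing $u=t.(1,2)$ and $v=s.(2,3)$ as in Lemma~\ref{l:3trans}, one has $uv=c\rho$ with $\rho$ the $3$-cycle on $\{1,2,3\}$ and $c\in T^3$ given by $c(1)=ts$, $c(2)=t^{-1}$, $c(3)=s^{-1}$; then $(uv)^3=c\cdot(\rho c\rho^{-1})\cdot(\rho^2 c\rho^{-2})$, and in each coordinate this is one of the cyclic products $ts\cdot s^{-1}\cdot t^{-1}$, $t^{-1}\cdot ts\cdot s^{-1}$, $s^{-1}\cdot t^{-1}\cdot ts$, each of which telescopes to $1$ in any group $T$. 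So $(uv)^3=1$ unconditionally.

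Consequently the hypothesis ``every element of $T$ has order $1$, $2$ or $3$'' is used \emph{only} in the same-support case, where $uv$ lies in the base group and has order $|ts^{-1}|$; this is exactly what makes the condition necessary and sufficient. Your proposed detour through the $W_p(\tilde\Phi_n)$ groups is therefore unnecessary, and would in any case be circular, since those groups are particular instances of the proposition you are proving. Once you carry out the three-line telescoping above, your case analysis is complete.
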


Note that the groups $T$ with restrictions as in the proposition were classified in \cite{Neumann}. 

This wreathed product construction can be applied to the 3-transposition groups mentioned above: $W(D_n)\simeq Wr(2,n)$, $W_3(\tilde{A}_{n-1})\simeq Wr(3,n)$, $ W_3(\tilde{D}_n)\simeq Wr(S_3,n)$, and $W_2(\tilde{D}_n)$ has the same central type as $Wr(2^2,n)$~\cite{hs}. 
Therefore, it is useful to have a description of the corresponding class of 3-transpositions in the group $Wr(T,n)$.
The next two lemmas are well known, we have included their proofs for completeness and to show how we deal with points and lines in our notation.

\begin{lemma}\label{l:3trans}
Consider the restricted wreathed product  $G=T\operatorname{wr}{S}_n$ and a transposition $d\in S_n$. Then $d^G$ consists of elements $t_it^{-1}_j(i,j)$, where $t\in T$ and $1\leq i< j\leq n$.
\end{lemma}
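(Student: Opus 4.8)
The plan is to compute the conjugacy class $d^G$ directly inside the wreath product $G = T\operatorname{wr}S_n$, where $d$ is a fixed transposition of the complement $S_n$. Without loss of generality take $d = (i_0,j_0)$ for some $i_0 < j_0$; actually it is cleanest to write $d = (1,2)$ and recover the general shape afterwards, or simply to conjugate a generic transposition $(i,j)$ of $S_n$ by base-group elements. The key structural fact I would use is that $G = B \rtimes S_n$ with $B = T^n$, that $S_n$ permutes the coordinates of $B$ via $(b^\sigma)(k) = b(\sigma^{-1}(k))$, and that conjugation of a permutation by a base element only alters the ``$T$-part'' while conjugation by a permutation moves the support of a transposition around. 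Since all transpositions of $S_n$ are conjugate in $S_n \le G$, it suffices to describe $\{\,d^b : b \in B\,\}$ for a single transposition $d = (i,j)$ and then let $(i,j)$ range over all pairs $i<j$.

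The main computation is then a one-line identity in the semidirect product: for $b \in B$ and $d=(i,j)\in S_n$, we have $b^{-1} d\, b = (b^{-1} b^{d})\, d$, where $b^{d}$ denotes the image of $b$ under conjugation by $d$, i.e. the element of $B$ obtained by swapping the $i$-th and $j$-th coordinates of $b$. Writing $b = \prod_k (b(k))_k$ in the notation $\iota_k$, $\pi_k$ of the excerpt, one checks that $b^{-1}b^{d}$ is supported only on coordinates $i$ and $j$: its $i$-th coordinate is $b(i)^{-1}b(j)$ and its $j$-th coordinate is $b(j)^{-1}b(i) = (b(i)^{-1}b(j))^{-1}$. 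Hence, setting $t := b(i)^{-1} b(j) \in T$, we get $d^b = t_i\, t^{-1}_j\, (i,j)$, which is exactly the claimed form. Conversely, given any $t \in T$ and any $i<j$, choosing $b$ with $b(i) = 1$, $b(j) = t$, and $b(k)$ arbitrary elsewhere (say $1$) realizes $t_i t^{-1}_j (i,j)$ as a conjugate of $d = (i,j)$, and $(i,j) = d^\sigma$ for a suitable $\sigma \in S_n$; composing the two conjugations shows $t_i t^{-1}_j (i,j) \in d^G$. This proves the two inclusions and hence the equality.

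I would then briefly note the only points needing care: first, that $G$ is the \emph{restricted} wreath product, but since $n$ is finite here $T^n$ is all of $B$ and restrictedness is vacuous — still worth a remark so the reader is not distracted; second, the sign/inverse bookkeeping in $b^{-1}b^{d}$, where one must be consistent about whether $S_n$ acts on the left or right on coordinate indices (I will fix the convention $(b^\sigma)(k) = b(\sigma^{-1}k)$ stated implicitly by $\pi_i(b)=b(i)$ and the description of the $S_n$-action, and check that with this convention the $i$-th coordinate of $b^{-1}b^{d}$ is indeed $b(i)^{-1}b(j)$ and not $b(j)^{-1}b(i)$ — either way the element has the stated form $t_i t^{-1}_j(i,j)$, only the name of $t$ changes). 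The computation is entirely routine; there is no real obstacle, which is presumably why the paper calls the lemma well known and includes the proof only to pin down notation for the subsequent analysis of lines in the Fischer space.
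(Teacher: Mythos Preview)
Your proposal is correct and follows essentially the same approach as the paper: both compute the conjugate of a transposition explicitly using the semidirect product structure $G=B\rtimes S_n$, reducing to the identity $b^{-1}(i,j)b=(b^{-1}b^{(i,j)})(i,j)$ and reading off the two nontrivial coordinates. The only cosmetic difference is that the paper conjugates $(1,2)$ by a general element $g=b\sigma$ in one pass (so the $S_n$-part moves the support and the $B$-part contributes $t=b(1)^{-1}b(2)$ simultaneously), whereas you split the computation into conjugation by $b\in B$ followed by conjugation in $S_n$; the content is identical.
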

\begin{proof}
Denote $B=T^n$. For each $i\in\{1,\ldots,n\}$, consider the group $T_i$ and maps $\iota_i:T\rightarrow T_i$, $\pi_i:B\rightarrow T_i$, as above.
An arbitrary element $g$ of $T\operatorname{wr}{S}_n$
can be written as $g=b\sigma$, where $b\in B$ and $\sigma\in S_n$. Then 
$$(1,2)^{g}=\sigma^{-1}b^{-1}(1,2)b\sigma=(b^{-1})^\sigma b^{(1,2)\sigma}(1,2)^{\sigma}.$$
Denote $i=1^\sigma$ and $j=2^\sigma$. If $k\neq1,2$
then $k^\sigma=k^{(1,2)\sigma}$ and hence $((b^{-1})^{\sigma})_{k^\sigma}$ and 
$(b^{(1,2)\sigma})_{k^\sigma}$ are elements of $T_{k^{\sigma}}$ whose product equals 1. Since  $1^{(1,2)\sigma}=j$ and $2^{(1,2)\sigma}=i$, we find that
 $(1,2)^g=(b(1)^{-1})_i(b(2)^{-1})_jb(2)_ib(1)_j(i,j)$.
Therefore, if $t=b(1)^{-1}b(2)$, then $(1,2)^g=t_it_j^{-1}(i,j)$. Clearly, for every $t\in T$ we can find $b\in B$ such that $t=b(1)^{-1}b(2)$ and hence $(1,2)^G=\{t_it_j^{-1}(i,j)~|~t\in T\}$.
\end{proof}

\noindent{\bf Notation.} We write $t.(i,j)$ for a 3-transposition element $t_it_j^{-1}(i,j)$ from Lemma~\ref{l:3trans}. Since $t.(i,j)=t^{-1}.(j,i)$, we will usually assume that $i<j$. 

\begin{lemma}\label{l:lines} Suppose that each element of $T$ has order 1, 2, or 3.
Then each line of the Fischer space of $Wr(T,n)$ coincides with one of the following sets.
\begin{enumerate}[(i)]
\item $\{t.(i,j), s.(j,k), ts.(i,k)\}$, where $s,t\in T$ and $1\leq i<j<k\leq n$;
\item $\{t.(i,j), s.(i,j), st^{-1}s.(i,j)\}$, where $s,t\in T$, $|st^{-1}|=3$, and $1\leq i<j\leq n$.
\end{enumerate}
\end{lemma}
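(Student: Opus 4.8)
The plan is to classify the lines of the Fischer space of $Wr(T,n)$ by exploiting the description of the $3$-transposition class from Lemma~\ref{l:3trans}, namely that every point has the form $t.(i,j) = t_it_j^{-1}(i,j)$ with $t\in T$ and $i<j$. A line through two collinear points $p$ and $q$ consists of $\{p, q, p^q\}$ (using the observation from the preliminaries that $e=c^d=d^c$ is the third point), so the whole problem reduces to: decide when two points $p=t.(i,j)$ and $q=s.(k,\ell)$ have $|pq|=3$, and in that case compute $p^q$. First I would reduce to two cases according to how the transpositions $(i,j)$ and $(k,\ell)$ in $S_n$ interact, since the product of $p$ and $q$ in $G$ covers the product $(i,j)(k,\ell)$ in $S_n$, and $|pq|\le 3$ already forces $|(i,j)(k,\ell)|\le 3$ in $S_n$. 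That leaves exactly two possibilities for non-commuting, non-equal support: the transpositions share exactly one index (giving case (i)), or they are equal as transpositions but the points differ (giving case (ii)); the disjoint-support case gives commuting points and no line.

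For case (i), after relabeling I may assume the points are $t.(i,j)$ and $s.(j,k)$ with $i,j,k$ distinct; reordering indices via $t.(i,j)=t^{-1}.(j,i)$ as in the Notation lets me normalize to $i<j<k$ at the end. I would compute the product $(t_it_j^{-1}(i,j))(s_js_k^{-1}(j,k))$ directly in the wreath product: the $S_n$-part is $(i,j)(j,k)$, a $3$-cycle, and tracking the base-group coordinates shows the product has order $3$ and that the third point on the line is $(ts).(i,k)$, i.e. $(ts)_i(ts)_k^{-1}(i,k)$. This is a short but careful bookkeeping computation of the action of $(i,j)$ and $(j,k)$ on the coordinates $T_i,T_j,T_k$ — essentially the same conjugation calculation as in the proof of Lemma~\ref{l:3trans}. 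For case (ii), the points are $t.(i,j)$ and $s.(i,j)$ with $t\neq s$; here the $S_n$-parts coincide, so $pq$ lies in the base group $B$, and in fact in $T_iT_j$; computing, $pq$ is conjugate to $(st^{-1})_i$ times its inverse in the $j$-slot, so $|pq|=|st^{-1}|$, and collinearity forces $|st^{-1}|=3$. Conjugating $t.(i,j)$ by $s.(i,j)$ then yields the third point $(st^{-1}s).(i,j)$, using that $|st^{-1}|=3$ implies $(st^{-1})^{-1}=ts^{-1}=s t^{-1}$ inside the subgroup $\langle s,t\rangle$... more precisely that $st^{-1}s = (st^{-1})^{-2}\cdot s$ collapses correctly; this is where the order-$3$ hypothesis is essential.

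The main obstacle I anticipate is bookkeeping rather than conceptual: keeping the wreath-product conjugation straight across the three (or two) relevant coordinate slots, making sure the inverse/ordering conventions of the $t.(i,j)$ notation are applied consistently, and verifying in case (ii) that the order-$3$ relation $|st^{-1}|=3$ is exactly the condition under which a genuine (three-point) line appears and that the formula $st^{-1}s.(i,j)$ for the third point is symmetric in $s$ and $t$ as it must be. I would also need to check completeness — that these are the \emph{only} lines — which follows because any line is determined by any two of its points, every point is of the form $t.(i,j)$, and the case analysis on $(i,j)$ versus $(k,\ell)$ (disjoint, sharing one index, or equal) is exhaustive; the disjoint case and the equal-transposition-with $|st^{-1}|\le 2$ case produce commuting pairs and hence no line. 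Finally I would remark that in case (i) one should normalize the index order, and that distinctness of the three listed points holds automatically (e.g. in (ii) because $t,s,st^{-1}s$ are pairwise distinct when $|st^{-1}|=3$), so each set is a bona fide line.
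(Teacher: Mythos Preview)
Your proposal is correct and follows essentially the same approach as the paper: reduce to the projection onto $S_n$, split into the cases where the two transpositions share one index or coincide, and compute $p^q$ directly in the wreath product to obtain the third point. The paper's computation in case~(ii) shows $x^y=(st^{-1}s)_i(s^{-1}ts^{-1})_j(i,j)=(st^{-1}s).(i,j)$ directly from the notation (since $(st^{-1}s)^{-1}=s^{-1}ts^{-1}$), so no ``collapsing via order~3'' is needed there; the order-3 condition enters only to ensure $|pq|=3$.
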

\begin{proof}
Denote $B=T^n$. Let $x=t.(i,j)$ and $y=s.(i',j')$ be collinear points. Note that order of $xy$ is divisible by the order of $(i,j)(i',j')$.
Consequently, if $|xy|=3$, then either $|(i,j)(i',j')|=3$ or $|(i,j)(i',j')|=1$.
In the first case, we find that $|\{i,j\}\cap\{i',j'\}|=1$. Then Proposition~\ref{p:w(k,n)} implies that $|xy|=3$. 
We can assume that $i'=j$, $j'=k$. Now we find that
\begin{multline*}
x^y=(j,k)s_ks_j^{-1}t_it^{-1}_j(i,j)s_js^{-1}_k(j,k)\\=(s^{-1}t^{-1})_ks_jt_i(j,k)(i,j)s_js^{-1}_k(j,k)=(s^{-1}t^{-1})_k(ts)_i(k,i)=ts.(i,k).
\end{multline*}

Suppose that $|(i,j)(i',j')|=1$. Then $i_1=i$ and $j_1=j$. Hence $xy=(ts^{-1})_i(st^{-1})_j$. Clearly, $|xy|=|ts^{-1}|$. Then
$$x^y=(i,j)s_i^{-1}s_jt_it^{-1}_j(i,j)s_is^{-1}_j(i,j)\\=(st^{-1}s)_i(s^{-1}ts^{-1})_j(i,j),$$
as claimed.
\end{proof}

\subsection{3-transposition groups $W_p(\tilde{A}_{n-1})$ and their Fischer spaces}\label{Wp(An)}

In this subsection, we focus on 3-transposition groups $W(A_{n-1})$ and $W_p(\tilde{A}_{n-1})$, where $p=2,3$. We discuss a description of the Fischer spaces of these groups that is convenient for further calculations. As was mentioned above, these groups can be considered as $Wr(p,n)$, where $p\in\{1,2,3\}$.

Let $n$ be an integer and $n\geq3$. 
For $p\in\{2,3\}$ consider the $n$-dimensional permutational module $V$ of $S_n$ over $\F_p$. Let $e_i$, 
$i\in\{1,\ldots,n\}$, be a basis of $V$ permuted by $S_n$.
Then the natural semi-direct product $V\rtimes S_n$ is isomorphic to $p\operatorname{wr}S_n$. Denote the $(n-1)$-dimensional `sum-zero' submodule of $V$ by $U$. 
Then $Wr(p,n)$ is isomorphic to the natural semidirect product $U\rtimes S_n$. Note that, for $p=2$ and even $n$, $U$ contains a $1$-dimensional `all-one' submodule $D$, which is the center of $Wr(2,n)$. When $p=3$, $U$ is irreducible. In both cases, $U$ is the unique minimal non-central normal subgroup of $Wr(p,n)$ and $Wr(p,n)/U\cong S_n$. Since $S_n$ does not have proper factor groups containing commuting involutions, we conclude that, up to the center, groups $Wr(p,n)$ have no other factors that are $3$-transposition groups. Now we describe the Fischer spaces of these groups.

If $p=1$, then the Fischer space of $W(A_{n-1})=S_n$ consists of 
$n(n-1)/2$ points corresponding to transpositions $b_{i,j}=(i,j)$,
where each line is of shape $\{b_{i,j},b_{i,k},b_{j,k}\}$ with $1\leq i<j<k\leq n$.

Assume that $p=2$. It follows from Lemmas~\ref{l:3trans} and \ref{l:lines} that the Fischer space of $Wr(2,n)=U:S_n$ consists of 
$n(n-1)$ points: $b_{i,j}=(i,j)$ and $c_{i,j}=(e_i+e_j)(i,j)$, for $1\leq 
i<j\leq n$; and $n^2$ lines, where each `b' line 
$\{b_{i,j},b_{i,k},b_{j,k}\}$, $1\leq i<j<k\leq n$, is complemented by three 
`bc' lines $\{b_{i,j},c_{i,k},c_{j,k}\}$, $\{b_{i,k},c_{i,j},c_{j,k}\}$, 
and $\{b_{j,k},c_{i,j},c_{i,k}\}$. 

Assume that $p=3$. By Lemma~\ref{l:3trans}, for each pair $i$ and $j$ with $1\leq i<j\leq n$ we have three points: $b_{i,j}=(i,j)=b_{j,i}$, $c_{i,j}=(e_i-e_j)(i,j)$ and $c_{j,i}=(e_j-e_i)(i,j)$. Consequently, the Fischer space has $\frac{3n(n-1)}{2}$ points.
By Lemma~\ref{l:lines}, the lines are of several types. First, for each $1\leq i<j\leq n$, the triple (1) $\{b_{i,j},c_{i,j},c_{j,i}\}$ is a line. Secondly, for distinct $i$, $j$, and $k$ in $\{1,\ldots,n\}$, the triples (2) $\{b_{i,j},b_{i,k},b_{j,k}\}$, (3) $\{b_{i,j},c_{i,k},c_{j,k}\}$, (4) $\{b_{j,k},c_{i,j},c_{i,k}\}$, and (5) $\{c_{i,j},c_{j,k},c_{k,i}\}$ are lines.

In \cite{gjmss} authors introduced the following family of fixed subalgebras for the groups $Wr(2,n)$ with even $n$.
\begin{proposition}\cite[Prop.~9.1]{gjmss}\label{a:2qketa}
Suppose that $n=2k$ and $M=M(Wr(2,n))$. Consider an involution $\tau=b_{1,2}b_{3,4}\cdots b_{n-1,n}\in Wr(2,n)$, a permutation $\pi=(1,2)(3,4)\cdots(2k-1,2k)$, and $H=\langle\tau\rangle$. Then the fixed subalgebra $M_H$ coincides with the flip subalgebra $A(\tau)$ and has dimension $2k^2$. Among the orbit vectors, there are $2k$ singles: $b_{2i-1,2i}$ and $c_{2i-1,2i}$, where $1\leq i\leq k$; $2(k^2-k)$ doubles: $b_{2i-1,j}+b_{2i,j^\pi}$ and $c_{2i-1,j}+c_{2i,j^\pi}$, where $1\leq i\leq k-1$, $2i+1\leq j\leq 2k$; and no extras.
\end{proposition}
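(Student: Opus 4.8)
The plan is to compute the $H$-orbits on the point set $D$ of the Fischer space $\Gm$ of $Wr(2,n)$ directly, using the description recorded above: the points are $b_{i,j}=(i,j)$ and $c_{i,j}=(e_i+e_j)(i,j)$ for $1\le i<j\le n$. Since $\tau$ equals the permutation $\pi=(1,2)(3,4)\cdots(2k-1,2k)\in S_n\le Wr(2,n)$, conjugation by $\tau$ acts on $\Gm$ by $b_{i,j}\mapsto b_{i^\pi,j^\pi}$ and $c_{i,j}\mapsto c_{i^\pi,j^\pi}$; in particular it preserves the sets of $b$-points and of $c$-points. The first step is to identify the fixed points: as $\pi$ is fixed-point-free and its $2$-cycles are the pairs $\{2\ell-1,2\ell\}$, an unordered pair $\{i,j\}$ is $\pi$-invariant exactly when it is one of these cycles. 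Hence the singles are precisely $b_{2\ell-1,2\ell}$ and $c_{2\ell-1,2\ell}$ for $1\le\ell\le k$, giving the claimed $2k$ of them.

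Next I would describe the length-$2$ orbits. The key elementary fact is that if $\{i,j\}$ is not a cycle of $\pi$, then $\{i,j\}\cap\{i^\pi,j^\pi\}=\emptyset$: a coincidence among the four digits is impossible, since $\pi$ has no fixed point and since $i^\pi=j$ would force $\{i,j\}$ to be a cycle. From this one checks by a short case analysis on the parities of the two indices that every length-$2$ $b$-orbit contains a unique point $b_{p,q}$ (with $p<q$) having $p$ odd; writing $p=2i-1$, the excluded value $q=2i$ leaves $q\in\{2i+1,\dots,2k\}$, which in turn forces $i\in\{1,\dots,k-1\}$, and the orbit is then $\{b_{2i-1,j},\,b_{2i,j^\pi}\}$. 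The same parametrisation works verbatim for $c$-orbits. A count gives $\sum_{i=1}^{k-1}(2k-2i)=k^2-k$ orbits of each kind, hence $2(k^2-k)$ length-$2$ orbits; together with the $2k$ singles this exhausts the $2k(2k-1)$ points and yields $2k^2$ orbits, so $\dim M_H=2k^2$ by Proposition~\ref{fixed-basis}.

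It remains to rule out extras, i.e.\ to show that in each length-$2$ orbit the two points commute as involutions (equivalently, are non-collinear). For a $b$-orbit this is immediate, since $b_{i,j}$ and $b_{i^\pi,j^\pi}$ are transpositions of $S_n$ with disjoint supports. For a $c$-orbit I would compute inside $Wr(2,n)=U\rtimes S_n$: since $(i,j)$ fixes $e_{i^\pi}$ and $e_{j^\pi}$,
\[
c_{i,j}\cdot c_{i^\pi,j^\pi}=(e_i+e_j+e_{i^\pi}+e_{j^\pi})\,(i,j)(i^\pi,j^\pi).
\]
Setting $v=e_i+e_j+e_{i^\pi}+e_{j^\pi}\in U$ and $\rho=(i,j)(i^\pi,j^\pi)$, the involution $\rho$ permutes the four digits $i,j,i^\pi,j^\pi$ among themselves, so $v^\rho=v$ and hence $(v\rho)^2=v+v^\rho=0$ over $\F_2$; as $\rho\ne1$, this element has order $2$. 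Therefore $c_{i,j}$ and $c_{i^\pi,j^\pi}$ are non-collinear, so the orbit is a double and not an extra. Consequently the orbit-vector basis of $M_H$ from Proposition~\ref{fixed-basis} consists only of singles and doubles; their span is all of $M_H$, and since the flip subalgebra $A(\tau)$ is by definition the subalgebra generated by all singles and all doubles, we get $M_H\subseteq A(\tau)$, the reverse inclusion being trivial, so $M_H=A(\tau)$.

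The genuine content of the statement is the absence of extras, since that is precisely what makes the flip and fixed subalgebras coincide; the only real work beyond this is the combinatorial bookkeeping showing that the ranges $1\le i\le k-1$, $2i+1\le j\le 2k$ enumerate each length-$2$ orbit exactly once (equivalently, that ``$p$ odd'' pins down a unique representative in each orbit). I expect that to be the most error-prone step, together with keeping the $\F_2$-translation parts straight in the $c$-point product, but both are entirely routine.
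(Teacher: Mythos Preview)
Your argument is correct. The paper does not supply its own proof of this proposition: it is quoted verbatim from \cite[Prop.~9.1]{gjmss}, and the text moves on immediately after the statement. So there is nothing in the present paper to compare your proof against beyond the ambient framework (Proposition~\ref{fixed-basis}, the description of the Fischer space of $Wr(2,n)$, and the definitions of singles, doubles, extras), all of which you use exactly as intended.

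For the record, every step checks out: the identification of singles via the fixed-point-free involution $\pi$; the disjointness $\{i,j\}\cap\{i^\pi,j^\pi\}=\emptyset$ for non-cycle pairs; the ``odd smaller index'' normal form for length-$2$ orbits and the resulting count $\sum_{i=1}^{k-1}(2k-2i)=k^2-k$; and the verification that $c_{i,j}c_{i^\pi,j^\pi}$ has order $2$ via $v^\rho=v$ (indeed $\rho=(i,j)(i^\pi,j^\pi)$ already stabilises each summand $e_i+e_j$ and $e_{i^\pi}+e_{j^\pi}$). The final sentence deducing $M_H=A(\tau)$ from the absence of extras is the right way to close. This is precisely the kind of direct orbit computation one expects for such a statement, and is in the same spirit as the proofs the paper does give for the analogous Propositions~\ref{basis} and~\ref{p:tau:W2(Dn)}.
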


Following \cite{gjmss}, we denote this algebra by $2Q_k(\eta)$.
We also need the fixed subalgebra for the same flip in the Matsuo algebra of $Wr(3,n)$.

\begin{proposition}\label{basis}
Suppose that $n=2k$ and $M=M(Wr(3,n))$. Consider an involution $\tau=b_{1,2}b_{3,4}\cdots b_{n-1,n}\in Wr(2,n)$, a permutation $\pi=(1,2)(3,4)\cdots(2k-1,2k)$, and $H=\langle\tau\rangle$. Then the fixed subalgebra $M_H$ has dimension $3k^2-k$. Among the orbit vectors, there are $k$ singles: $b_{2i-1,2i}$, where $1\leq i\leq k$; $3(k^2-k)$ doubles: $b_{2i-1,j}+b_{2i,j^\pi}$, $c_{2i-1,j}+c_{2i,j^\pi}$, and $c_{j,2i-1}+c_{j^\pi, 2i}$, where $1\leq i\leq k-1$, $2i+1\leq j\leq 2k$; and $k$ extras $c_{2i-1,2i}+c_{2i,2i-1}$, where $1\leq i\leq k$.
\end{proposition}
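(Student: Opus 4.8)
The plan is to invoke Proposition~\ref{fixed-basis}: with $H=\langle\tau\rangle$, the orbit vectors $\sum_{c\in O_i}c$ over the $\langle\tau\rangle$-orbits $O_i$ on the point set $D$ of $\Gamma=\Gamma(Wr(3,n))$ form a basis of $M_H$, so the whole statement reduces to enumerating these orbits, counting them, and sorting them into singles, doubles, and extras. By the description of $\Gamma(Wr(3,n))$ in Subsection~\ref{Wp(An)}, the points are the $b_{i,j}$ with $1\le i<j\le n$ together with the $c_{i,j}$, indexed by \emph{ordered} pairs of distinct indices; and by Lemma~\ref{l:3trans} each point has the form $t.(i,j)=t_it_j^{-1}(i,j)$ with $t\in\F_3$. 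The element $\tau$ is the permutation $\pi=(1,2)(3,4)\cdots(n-1,n)$ sitting inside the complement $S_n$, so conjugation by $\tau$ sends $t.(i,j)$ to $t.(i^\pi,j^\pi)$; hence on labels it acts by $b_{i,j}\mapsto b_{i^\pi,j^\pi}$ and $c_{i,j}\mapsto c_{i^\pi,j^\pi}$. Call $\{2l-1,2l\}$, $1\le l\le k$, the \emph{special pairs} (the $2$-cycles of $\pi$).

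The first step is to locate the fixed points of $\tau$. A point $b_{i,j}$ is fixed exactly when $\{i,j\}$ is a special pair, while no $c_{i,j}$ is fixed because $\pi$ is fixed-point-free, so $(i^\pi,j^\pi)=(i,j)$ is impossible. This yields precisely the $k$ singles $b_{2i-1,2i}$; all remaining orbits have size $2$, and since $|D|=\tfrac{3n(n-1)}{2}=6k^2-3k$ we immediately obtain $\dim M_H=k+\tfrac12(6k^2-3k-k)=3k^2-k$.

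The second step is to distinguish doubles from extras among the size-$2$ orbits: an orbit $\{P,P^\tau\}$ gives an extra exactly when $P$ and $P^\tau$ are collinear in $\Gamma$. Inspecting the list of lines in Lemma~\ref{l:lines}, any two collinear points share an index of their transposition parts; but if $\{i,j\}$ is \emph{not} a special pair then a short check gives $\{i,j\}\cap\{i^\pi,j^\pi\}=\emptyset$, so $P$ and $P^\tau$ have disjoint support, their group product is an involution, and they are non-collinear, i.e.\ the orbit is a double. The only collinear orbits are the pairs $\{c_{2l-1,2l},c_{2l,2l-1}\}$, which lie together with $b_{2l-1,2l}$ on a type-(1) line; these give the $k$ extras $c_{2i-1,2i}+c_{2i,2i-1}$. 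Everything else is a double: the $\binom n2-k$ points $b_{i,j}$ with $\{i,j\}$ not special fall into $k^2-k$ double orbits, and the $n(n-1)-2k$ remaining $c_{i,j}$ fall into $2(k^2-k)$ double orbits, for $3(k^2-k)$ doubles, consistent with $k+k+3(k^2-k)=3k^2-k$.

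The last step is to pick convenient representatives so the list of doubles reads as in the statement: every $b$-double is $b_{2i-1,j}+b_{2i,j^\pi}$ for a unique pair $(i,j)$ with $1\le i\le k-1$ and $2i+1\le j\le 2k$, and the non-special $c$-orbits split into the two families $c_{2i-1,j}+c_{2i,j^\pi}$ and $c_{j,2i-1}+c_{j^\pi,2i}$ with $i,j$ in the same range. The main obstacle is exactly this final bookkeeping — verifying that these three indexed families are exhaustive and pairwise disjoint — which is a parity-and-$\pi$ case analysis on the two indices. It is also the point at which the $p=3$ case genuinely departs from the $p=2$ case of Proposition~\ref{a:2qketa}: the ordered indexing of the $c$-points both roughly doubles the number of $c$-orbits and produces the new extras concentrated on the special pairs, so that here $M_H$ is strictly larger than the associated flip subalgebra.
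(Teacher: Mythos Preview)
Your argument is correct and follows exactly the approach the paper has in mind: the paper's own proof is the single sentence ``This is a consequence of the description of lines in the Fischer space of $Wr(3,n)$,'' and what you have written is the natural expansion of that sentence via Proposition~\ref{fixed-basis}, the point list in Subsection~\ref{Wp(An)}, and Lemma~\ref{l:lines}.

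One small overreach: your final clause asserts that $M_H$ is \emph{strictly} larger than the flip subalgebra $A(\tau)$. That does not follow from the mere presence of extras; the extras could in principle lie in the subalgebra generated by singles and doubles. The paper leaves this as an open question immediately after the proposition and only confirms a gap of one for $k=2$, so you should soften that remark to ``so that here $M_H$ may exceed the associated flip subalgebra'' or simply drop it.
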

\begin{proof} This is a consequence of the description of lines in the Fischer space of $Wr(3,n)$.
\end{proof}

\begin{question}
What is the dimension of the flip algebra $A(\tau)$ in Proposition~\ref{basis}?
\end{question}
\begin{remark}
In Subsection~\ref{sec:D7}, we find that if $k=2$, then $\dim A(\tau)=9=\dim M_H-1$.
\end{remark}

\subsection{3-transposition groups $W_p(\tilde{D}_{n})$ and their Fischer spaces}\label{Wp(Dn)}

In this subsection, we focus on the 3-transposition groups $W_p(\tilde{D}_{n})$, where $p=2,3$.
As was mentioned above, it is true that $W_3(\tilde{D}_{n})\simeq Wr(S_3,n)$ and $W_2(\tilde{D}_{n})$ has the same central type as $Wr(2^2,n)$.
Now we consider the groups $Wr(2^2,n)$ and $Wr(S_3,n)$ in detail.

Clearly, we can identify the base group of $2^2\operatorname{wr}S_n$
with $V_1\times V_2$, where $V_1$ and $V_2$ are vector spaces of dimension $n$ over a field of order $2$. Namely, $S_n$ permutes bases $e_1,\ldots,e_n$ and $f_1,\ldots,f_n$ of $V_1$ and $V_2$, respectively: $e_i^\pi=e_{i^\pi}$ and $f_i^\pi=f_{i^\pi}$ for any $\pi\in S_n$.
Hyperplanes $U_1=\langle e_i+e_{i+1}~|~1\leq i\leq n-1\rangle$ and 
$U_2=\langle f_i+f_{i+1}~|~1\leq i\leq n-1\rangle$ are invariant under this $S_n$-action. Therefore, the group $(U_1\times U_2)\rtimes S_n$ is a subgroup of $(V_1\times V_2)\rtimes S_n$. 

\begin{proposition}\label{p:W2(Dn)} The following statements hold.
\begin{enumerate}[(i)]
\item The conjugacy class of $(1,2)$ in $(V_1\times V_2)\rtimes S_n$
comprises of $2n(n-1)$ elements $b_{i,j}=b_{j,i}=(i,j)$, $c_{i,j}=c_{j,i}=(e_i+e_j)(i,j)$, $d_{i,j}=d_{j,i}=(f_i+f_j)(i,j)$, and $e_{i,j}=e_{j,i}=(e_i+e_j)(f_i+f_j)(i,j)$, where $1\leq i<j\leq n$.
\item The conjugacy class of $(1,2)$ in $(V_1\times V_2)\rtimes S_n$ generates $(U_1\times U_2)\rtimes S_n$ and satisfies the 3-transposition property; in particular, $(U_1\times U_2)\rtimes S_n\simeq Wr(2^2,n)$.
\item Each point of the Fischer space $\Gamma((U_1\times U_2)\rtimes S_n)$ lies exactly on $4(n-2)$ lines. The lines
are $\{b_{i,j}, b_{i,k}, b_{j,k}\}$, $\{b_{i,j}, c_{i,k}, c_{j,k}\}$, $\{b_{i,j}, d_{i,k}, d_{j,k}\}$,
$\{b_{i,j}, e_{i,k}, e_{j,k}\}$, $\{c_{i,j}, d_{i,k}, e_{j,k}\}$,
 where $i$, $j$, and $k$ are distinct elements of $\{1,\ldots,n\}$.
\end{enumerate}
\end{proposition}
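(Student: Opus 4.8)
The plan is to prove the three parts in sequence, since (ii) rests on (i) and (iii) on (ii). Throughout I identify the base group of $2^2\operatorname{wr}S_n$ with $V_1\times V_2$ so that the $i$-th direct factor $2^2$ is $\langle e_i\rangle\times\langle f_i\rangle$; writing $\varepsilon,\varphi$ for the two involutions generating $T=2^2$, this means $\iota_i(\varepsilon)=e_i$, $\iota_i(\varphi)=f_i$, $\iota_i(\varepsilon\varphi)=e_if_i$, and I fix the dictionary $1.(i,j)=b_{i,j}$, $\varepsilon.(i,j)=c_{i,j}$, $\varphi.(i,j)=d_{i,j}$, $\varepsilon\varphi.(i,j)=e_{i,j}$ in the notation of Lemma~\ref{l:3trans}.

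For (i) I would invoke Lemma~\ref{l:3trans} with $T=2^2$: since $T$ has exponent two we have $t_it_j^{-1}=t_it_j$, so the four elements of $T$ yield precisely the elements $b_{i,j},c_{i,j},d_{i,j},e_{i,j}$ as listed, and the relations $b_{i,j}=b_{j,i}$, etc., are immediate from the formulas; hence the class has $4\binom{n}{2}=2n(n-1)$ members. For (ii), the $b_{i,j}=(i,j)$ generate the complement $S_n$, while $c_{i,j}b_{i,j}=e_i+e_j$ and $d_{i,j}b_{i,j}=f_i+f_j$ run over generating sets of $U_1$ and $U_2$, so the class generates at least $(U_1\times U_2)\rtimes S_n$; conversely each of $b_{i,j},c_{i,j},d_{i,j},e_{i,j}$ lies in $(U_1\times U_2)\rtimes S_n$ because $e_i+e_j\in U_1$ and $f_i+f_j\in U_2$. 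Thus $\langle(1,2)^G\rangle=(U_1\times U_2)\rtimes S_n$, which is $Wr(2^2,n)$ by definition, and the $3$-transposition property follows at once from Proposition~\ref{p:w(k,n)} since every element of $T=2^2$ has order $1$ or $2$ (the bound $|xy|\le 3$ for members $x,y$ of the class does not depend on whether the product is formed in $2^2\operatorname{wr}S_n$ or in $Wr(2^2,n)$).

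For (iii) I would feed the identification $Wr(2^2,n)=(U_1\times U_2)\rtimes S_n$ into Lemma~\ref{l:lines}. Because $T=2^2$ has exponent two there is no pair $s,t\in T$ with $|st^{-1}|=3$, so no lines of type (ii) occur and every line has the form $\{t.(i,j),\,s.(j,k),\,(ts).(i,k)\}$ with $s,t\in T$, $i<j<k$. Running over the sixteen ordered pairs $(t,s)$ and translating through the dictionary above, using $t.(i,j)=t.(j,i)$ to permute the three indices freely, one checks that each such triple is, after relabelling $i,j,k$: the triple $\{b_{i,j},b_{i,k},b_{j,k}\}$ when $t=s=1$; one of $\{b_{i,j},c_{i,k},c_{j,k}\}$, $\{b_{i,j},d_{i,k},d_{j,k}\}$, $\{b_{i,j},e_{i,k},e_{j,k}\}$ when $t=s\neq 1$ or exactly one of $t,s$ equals $1$; and $\{c_{i,j},d_{i,k},e_{j,k}\}$ in the remaining six cases, where $t,s$ are distinct and nontrivial so that $\{t,s,ts\}=\{\varepsilon,\varphi,\varepsilon\varphi\}$. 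Conversely every displayed triple arises for a suitable $(t,s)$, so these five shapes are exactly the lines. Finally I would count the lines through a point: through $b_{1,2}$ each of the four shapes containing a $b$ contributes the $n-2$ lines indexed by the third point; through $c_{1,2}$ the shape $\{b_{a,b},c_{a,c},c_{b,c}\}$ and the shape $\{c_{p,q},d_{p,r},e_{q,r}\}$ each contribute $2(n-2)$ lines (from the two ways of matching the pair $\{1,2\}$); either way the count is $4(n-2)$, and the point-types $d_{i,j}$, $e_{i,j}$ are handled by the symmetry $\varepsilon\leftrightarrow\varphi\leftrightarrow\varepsilon\varphi$ of $T$.

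The argument is entirely bookkeeping and I foresee no conceptual obstacle; the one thing to be careful about is the reduction of the type-(i) triples of Lemma~\ref{l:lines} to the five named shapes, where after the reorderings $t.(i,j)=t.(j,i)$ one must keep track of which of $i,j,k$ plays which role, and likewise the incidence count must not conflate the genuinely distinct lines arising from the two matchings (e.g.\ $(a,c)=(1,2)$ versus $(a,c)=(2,1)$ in the shape $\{b_{a,b},c_{a,c},c_{b,c}\}$ through $c_{1,2}$).
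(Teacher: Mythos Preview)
Your proposal is correct and follows essentially the same route as the paper: both arguments invoke Lemma~\ref{l:3trans} with $T=2^2$ for part~(i), and Lemma~\ref{l:lines} together with the observation that $2^2$ has no elements of order three (so only type-(i) lines occur) for part~(iii). Your treatment is in fact more thorough than the paper's, since you spell out part~(ii) explicitly and carry out the per-point line count, whereas the paper leaves (ii) implicit and simply records the five conjugation formulas $b_{i,j}^{b_{i,k}}=b_{j,k}$, \dots, $c_{i,j}^{d_{i,k}}=e_{j,k}$.
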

\begin{proof} 
The base group of $2^2\operatorname{wr}S_n=(V_1\times V_2)\rtimes S_n$ is the direct product of groups $T_i=\{1,e_i,e_if_i, f_i\}$ for $1\leq i\leq n$. By Lemma~\ref{l:3trans}, there are four types of points for each pair $i<j$:
$b_{i,j}=(i,j)$, $c_{i,j}=(e_i+e_j)(i,j)$, $d_{i,j}=(f_i+f_j)(i,j)$, and $e_{i,j}=(e_i+e_j)(f_i+f_j)(i,j)$.
Since there are no elements of order three in the Klein four-group, Lemma~\ref{l:lines} implies that each line goes through two points $x_{i,j}$,
$y_{j,k}$, where $1\leq i<j<k\leq n$ and $x,y\in\{b,c,d,e\}$. A straightforward calculation shows that $b_{i,j}^{b_{i,k}}=b_{j,k}$, $b_{i,j}^{c_{i,k}}=c_{j,k}$, $b_{i,j}^{d_{i,k}}=d_{j,k}$, $b_{i,j}^{e_{i,k}}=e_{j,k}$, and $c_{i,j}^{d_{i,k}}=e_{j,k}$.
\end{proof}

Now we introduce a flip subalgebra in the Matsuo algebra of the group $(U_1\times U_2)\rtimes S_n$ for every even $n\geq 4$.

\begin{proposition}\label{p:tau:W2(Dn)} Suppose that $n=2k$, where $k\geq2$ is an integer.
Denote $G=(U_1\times U_2)\rtimes S_n$. 
Define a map $\tau_1:G\rightarrow G$
by rules $$b_{i,j}^{\tau_1}=b_{i,j}, c_{i,j}^{\tau_1}=d_{i,j}, d_{i,j}^{\tau_1}=c_{i,j}, e_{i,j}^{\tau_1}=e_{i,j},$$ 
where $1\leq i<j\leq n$. Let $\tau_2$ be an inner automorphism of $G$ corresponding to the element $\sigma=(1,2)(3,4)\ldots(2k-1,2k)\in S_n$. Denote $\tau=\tau_1\tau_2$. Then the following statements hold.
\begin{enumerate}[(i)]
\item $\tau_1$ and $\tau$ are involuntary automorphisms of $G$;
\item if $H=\langle\tau\rangle$, then the flip subalgebra $A=A(\tau)$ coincides with the fixed subalgebra $M_H$ and has dimension
$4k^2-k$. Among the orbit vectors, there are $2k$ singles
and $4k^2-3k$ doubles.
\end{enumerate}
\end{proposition}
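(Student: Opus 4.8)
The plan is to verify directly that $\tau$ is an automorphism of $G$ of order $2$, then identify the $H$-orbits on the point set $D$ using the explicit description from Proposition~\ref{p:W2(Dn)}(i), and finally check that no orbit vectors of ``extra'' type occur so that the flip subalgebra coincides with $M_H$.

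First I would treat part (i). The map $\tau_2$ is inner, hence an automorphism, and it is an involution since $\sigma^2=1$. For $\tau_1$, I would check that it preserves the multiplication table of $\Gamma$, i.e.\ that it sends lines to lines: using the line list in Proposition~\ref{p:W2(Dn)}(iii), the assignment $b\mapsto b$, $c\leftrightarrow d$, $e\mapsto e$ sends $\{b_{i,j},c_{i,k},c_{j,k}\}\leftrightarrow\{b_{i,j},d_{i,k},d_{j,k}\}$, fixes the $b$- and $e$-lines setwise, and sends $\{c_{i,j},d_{i,k},e_{j,k}\}$ to $\{d_{i,j},c_{i,k},e_{j,k}\}$, which is again a line of the fifth type after relabelling. (Concretely one can also see $\tau_1$ as conjugation by the order-two diagram automorphism swapping $V_1$ and $V_2$, which commutes with $S_n$; I would phrase it this way to make the automorphism property transparent and to see immediately that $\tau_1$ and $\tau_2$ commute, so $\tau=\tau_1\tau_2$ has order dividing $2$, and order exactly $2$ since it acts nontrivially.)

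Next, for part (ii), I would compute the action of $\tau$ on the $2n(n-1)$ points. Writing $j^\pi$ for the image of $j$ under $\sigma$ (so $\{1,2\},\{3,4\},\dots$ are the transposed pairs), the combined map sends $b_{i,j}\mapsto b_{i^\pi,j^\pi}$, $c_{i,j}\mapsto d_{i^\pi,j^\pi}$, $d_{i,j}\mapsto c_{i^\pi,j^\pi}$, $e_{i,j}\mapsto e_{i^\pi,j^\pi}$. I would enumerate the fixed points: among the $\{i,j\}$ that are $\sigma$-invariant as a pair, namely the $k$ pairs $\{2i-1,2i\}$, the points $b_{2i-1,2i}$ and $e_{2i-1,2i}$ are fixed (giving $2k$ singles), while $c_{2i-1,2i}\leftrightarrow d_{2i-1,2i}$ form $2$-orbits; for all other index pairs the $\sigma$-action already moves the pair, so every orbit has size $2$. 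Counting: there are $2n(n-1)=4k(2k-1)$ points, $2k$ of them fixed, so $(4k(2k-1)-2k)/2 = 4k^2-3k$ length-two orbits, giving total $\dim M_H = 2k + (4k^2-3k) = 4k^2-k$.

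The remaining and slightly delicate point is to show that all of these $4k^2-3k$ non-trivial orbits are \emph{doubles}, i.e.\ that in each orbit $\{x,y\}$ one has $xy=0$ in $M$ (equivalently $x,y$ non-collinear, equivalently commuting involutions), so that there are no extras and hence the flip subalgebra $A(\tau)$ equals $M_H$. I expect this to be the main obstacle, and I would handle it by cases on the orbit type. For an orbit $\{c_{2i-1,2i}, d_{2i-1,2i}\}$: these commute because their group elements are $(e_{2i-1}+e_{2i})(2i-1,2i)$ and $(f_{2i-1}+f_{2i})(2i-1,2i)$, whose product is $(e_{2i-1}+e_{2i})(f_{2i-1}+f_{2i})$, an involution — so non-collinear. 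For orbits coming from index pairs moved by $\sigma$, say $\{b_{i,j}, b_{i^\pi,j^\pi}\}$ (and similarly with $c$, $d$, or $e$ in place of $b$), I would use Lemma~\ref{l:lines}: two points $x_{i,j}$ and $y_{i',j'}$ are collinear only if $|\{i,j\}\cap\{i',j'\}|=1$ or $\{i,j\}=\{i',j'\}$; one checks that when $\{i,j\}$ and $\{i^\pi,j^\pi\}$ are distinct $\sigma$-orbit pairs they share at most... in fact the pairing structure of $\sigma$ forces either equal pairs (already excluded) or disjoint pairs or a one-element overlap, and in the overlap case an explicit computation with the formulas from Lemma~\ref{l:lines}(i) shows the two points and the third line-point do not form a $\tau$-stable triple in the ``extra'' way — more simply, I would argue that collinearity of $x$ and $x^\tau$ would force the third point of the line to be $\tau$-fixed, and then directly rule this out from the fixed-point list above. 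Assembling the cases gives that every non-trivial orbit is a double, so $A(\tau)=M_H$ with the stated dimension and orbit counts.
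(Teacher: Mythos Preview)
Your approach matches the paper's, and the enumeration of fixed points and the dimension count are identical. The one place you overcomplicate is the ``no extras'' verification: you allow for the possibility that $\{i,j\}$ and $\{i^\pi,j^\pi\}$ share exactly one element, but this never happens. Since $\sigma$ is a fixed-point-free involution, $i^\pi\in\{i,j\}$ forces $i^\pi=j$ and hence $j^\pi=i$, so the two index pairs are either equal or disjoint. Combined with the fact from Proposition~\ref{p:W2(Dn)}(iii) that every line in this Fischer space involves three \emph{distinct} indices (there are no order-$3$ elements in $2^2$, so Lemma~\ref{l:lines}(ii) contributes nothing), this immediately shows $x_{i,j}$ and $x_{i,j}^\tau$ are never collinear---which also subsumes the $\{c_{2i-1,2i},d_{2i-1,2i}\}$ case you treated by hand. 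This is exactly the paper's argument; your proposed workaround via the $\tau$-fixed third point of a hypothetical line would also succeed, but is unnecessary.
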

\begin{proof}
Clearly, the group $\Aut(G)$ includes a subgroup isomorphic to $S_3$ whose elements fix $S_n$ pointwise and permute subgroups $U_1$, $U_2$, and $U_1U_2$. Since elements $b_{i,j}$, $c_{i,j}$, and $d_{i,j}$ generate $G$, we see that $\tau_1$ coincides with the automorphism that switches $U_1$ and $U_2$. By definition, we find that $b_{i,j}^{\tau_1\tau_2}=b_{i^\sigma,j^\sigma}=b_{i,j}^{\tau_2\tau_1}$, $c_{i,j}^{\tau_1\tau_2}=d_{i^\sigma,j^\sigma}=c_{i,j}^{\tau_2\tau_1}$, and $d_{i,j}^{\tau_1\tau_2}=c_{i^\sigma,j^\sigma}=d_{i,j}^{\tau_2\tau_1}$. Therefore, $\tau$ has order two and its orbits on $b_{1,2}^G$ of length one are 
$b_{2i-1,2i}$ and $e_{2i-1,2i}$, where $1\leq i\leq k$.
Since there are $8k(2k-1)$ elements in $b_{1,2}^G$, the dimension of $M_H$
equals $2k+\frac{4k(2k-1)-2k}{2}=2k+4k^2-3k=4k^2-k$.
If $x_{i,j}\in b_{1,2}^G$, where $x=b,c,d,e$, then $x_{i,j}^\tau=y_{i^\sigma,j^\sigma}$, where $y=b,d,c,e$, respectively.
Since the sets $\{i,j\}$ and $\{i^\sigma,j^\sigma\}$ are either equal or disjoint, the order of $x_{i,j}x_{i,j}^\tau$ cannot be equal to three.
Thus, there are no extras among orbit vectors in the basis of $M_H$ and hence $A(\tau)=M_H$.
\end{proof}

We now turn to the group $Wr(S_3,n)$.
The base group $B$ of $S_3\operatorname{wr}S_n$
is the direct product of groups $T_i=\langle f_i, e_i~|~f_i^3=s_i^2=(s_if_i)^2=1 \rangle$, where $1\leq i\leq n$. Moreover, we have $e_i^\pi=e_{i^\pi}$ and $f_i^\pi=f_{i^\pi}$ for all $\pi\in S_n$ and $i\in\{1,\ldots,n\}$.
Denote $V_1=\langle f_1,f_2,\ldots,f_n\rangle$ and 
$V_2=\langle e_1,e_2,\ldots,e_n\rangle$. Then $B=V_1\rtimes V_2$.
We consider $V_1$ and $V_2$ as vector spaces of dimension $n$ over fields of order 3 and 2, respectively. Denote $U=\langle e_i+e_{i+1}~|~1\leq i\leq n-1\rangle$, which is a hyperplane in $V_2$.
Since $U$ is an $S_n$-invariant subgroup in $S_3\operatorname{wr}S_n$, index of $V_1\rtimes(U\rtimes S_n)$ in $S_3\operatorname{wr}S_n$ equals two.

\begin{proposition}\label{p:tau:W3(Dn)} The following statements hold.
\begin{enumerate}[(i)]
    \item The semidirect product $G=V_1\rtimes(U\rtimes S_n)$
    is isomorphic to $Wr(S_3, n)$.
    
    \item Denote $b_{i,j}=(i,j)$, $c_{i,j}=(e_i+e_j)(i,j)$, $f_{i,j}=(-f_i-f_j)(e_i+e_j)(i,j)$, $e_{i,j}=(f_i+f_j)(e_i+e_j)(i,j)$, where $1\leq i<j\leq n$, and 
    $d_{i,j}=(f_i-f_j)(i,j)$,  where  
    $1\leq i\neq j\leq n$. Then $b_{i,j}$, $c_{i,j}$, $d_{i,j}$, $e_{i,j}$, and $f_{i,j}$ are all $3n(n-1)$ points of the Fischer space of $G$.
    
    \item There are $6(n-2)+1$ lines through each point of the Fischer space. More precisely, $x_{i,j}$ is collinear with $2n-4$ out of $\frac{n(n-1)}{2}$ points in each of the six groups of points $b_{k,l},c_{k,l},d_{k,l},d_{l,k},e_{k,l},f_{k,l}$, where  $1\leq k<l\leq n$, and sets $\{i,j\}$ and $\{k,l\}$ have exactly one common element, these lines are presented in Table~\ref{t:lines-Wr(S3,n)}. In addition, each point lies exactly on one of the following $n(n-1)$ lines: $\{b_{i,j},d_{i,j},d_{j,i}\}$ and $\{c_{i,j},e_{i,j},f_{i,j}\}$. 
\end{enumerate}
\end{proposition}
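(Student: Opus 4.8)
\noindent\emph{Proof proposal.}
Everything here is a specialisation of Lemmas~\ref{l:3trans} and~\ref{l:lines} and Proposition~\ref{p:w(k,n)} to $T=S_3$; the bulk of the work is the bookkeeping that converts group words in $S_3$ into the named point types $b,c,d,e,f$. For part~(i) I would first record $S_3=\langle f,e\mid f^3=e^2=(ef)^2=1\rangle=\{1,f,f^{-1},e,ef,ef^{-1}\}$, with $e,ef,ef^{-1}$ its three involutions, and note that every element has order $1$, $2$ or $3$. By Lemma~\ref{l:3trans}, $(1,2)^{S_3\operatorname{wr}S_n}$ consists of the elements $t.(i,j)=t_it_j^{-1}(i,j)$ with $t\in S_3$, $1\le i<j\le n$; reading off the base part, its $V_2$-component is $1$ or $e_i+e_j$, hence of even weight, so every class element lies in $V_1\rtimes(U\rtimes S_n)$. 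Conversely, among products of class elements one finds $f_i-f_j$ and $f_i+f_j$ (which together span $V_1$), $e_i+e_j$ (which span $U$), and all transpositions $(i,j)$; so $\langle(1,2)^{S_3\operatorname{wr}S_n}\rangle=V_1\rtimes(U\rtimes S_n)=G$. Since every element of $S_3$ has order $1$, $2$ or $3$, Proposition~\ref{p:w(k,n)} shows that $(1,2)^{S_3\operatorname{wr}S_n}$ is a class of $3$-transpositions, and $G=Wr(S_3,n)$ by definition of the latter.

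For part~(ii) I would fix $i<j$ and match the point $t.(i,j)$, $t\in S_3$, with a named type: $1\mapsto b_{i,j}$, $f\mapsto d_{i,j}$, $f^{-1}\mapsto d_{j,i}$ (the two values $f^{\pm1}$ giving the two $d$-points on $\{i,j\}$, since $t.(i,j)=t^{-1}.(j,i)$), $e\mapsto c_{i,j}$, and the two remaining involutions $ef,ef^{-1}$ going to $f_{i,j}$ and $e_{i,j}$. Pinning down this last pair in the correct order is the one step where one actually has to multiply in $S_3$ (using $ef=f^{-1}e$) rather than merely read off sums of the $f_i$, and it is the natural place for a sign slip. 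As the six elements of $S_3$ are distinct, $t\mapsto t.(i,j)$ is injective for fixed $i<j$; and since $t\mapsto t^{-1}$ fixes $1,e,ef,ef^{-1}$ and only interchanges $f\leftrightarrow f^{-1}$, the points $b_{i,j},c_{i,j},e_{i,j},f_{i,j}$ are symmetric in $i,j$ while $d_{i,j}\neq d_{j,i}$. Thus each of the $\binom n2$ pairs carries $4$ symmetric points and two $d$-points, giving $6\binom n2=3n(n-1)$ points in all, which by Lemma~\ref{l:3trans} exhausts $\Gm(G)$.

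For part~(iii) I would read off the two line types from Lemma~\ref{l:lines}. Lines of type~(ii), $\{t.(i,j),s.(i,j),(st^{-1}s).(i,j)\}$, require $|st^{-1}|=3$, which in $S_3$ forces $s$ and $t$ into a single coset of $\langle f\rangle$; this yields exactly the families $\{b_{i,j},d_{i,j},d_{j,i}\}$ (from $\{1,f,f^{-1}\}$) and $\{c_{i,j},e_{i,j},f_{i,j}\}$ (from the three involutions), and a short check shows every point lies on exactly one of these $n(n-1)$ lines. For the type~(i) lines $\{t.(i,j),s.(j,k),(ts).(i,k)\}$ with $i,j,k$ distinct I would substitute the dictionary from~(ii) and tabulate the products $ts$ in $S_3$, which produces the entries of Table~\ref{t:lines-Wr(S3,n)}. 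For the counts: a point $x_{i,j}$ lies on its unique type~(ii) line, and for each of the $n-2$ choices of a third index $k$, Lemma~\ref{l:lines}(i) writes every type~(i) line through $x_{i,j}$ involving $k$ as $\{t.(i,j),s.(j,k),(ts).(i,k)\}$ with $t$ the (fixed) label of $x_{i,j}$ and $s\in S_3$ free, so there are exactly six such lines, on which $s.(j,k)$ and $(ts).(i,k)$ run over all six points of $\{j,k\}$ and of $\{i,k\}$; hence $x_{i,j}$ lies on $6(n-2)+1$ lines and is collinear with exactly $2(n-2)=2n-4$ points of each of the six types among the pairs meeting $\{i,j\}$ in one index. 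The only real obstacle is this bookkeeping in parts~(ii) and~(iii): one must carry out the $S_3$-multiplications consistently with the chosen labelling --- in particular without interchanging $e_{i,j}$ and $f_{i,j}$, which correspond to the two non-commuting involutions --- so that Table~\ref{t:lines-Wr(S3,n)} comes out right; there is no conceptual difficulty beyond the cited lemmas.
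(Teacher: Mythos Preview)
Your proposal is correct and follows essentially the same approach as the paper: both enumerate the conjugacy class via Lemma~\ref{l:3trans}, match the six $S_3$-labels to the named point types $b,c,d,e,f$, and then read off the two kinds of lines from Lemma~\ref{l:lines}. Your treatment of part~(i) is in fact more explicit than the paper's own proof, which leaves the identification $Wr(S_3,n)=V_1\rtimes(U\rtimes S_n)$ largely implicit in the listing of class elements; your check that the class lies in and generates this subgroup is exactly the right way to fill that in.
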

\begin{proof}
Fix generators $e$ and $f$ of $S_3$ such that $e^2=f^3=(ef)^2=1$.
By Lemma~\ref{l:3trans}, the conjugacy class of a transposition of $S_n$ in $S_3\operatorname{wr}S_n$
consists of elements $t_it_j^{-1}(i,j)$,
where $1\leq i<j\leq n$ and
$$(t_i,t_j)\in\{(1,1),(f_i,f_j),(-f_i,-f_j),(e_i,e_j), (f_ie_i,f_je_j),((-f_i)e_i,(-f_j)e_j)\}.$$
Since $(f_je_j)^{-1}=f_je_j$ and $((-f_j)e_j)^{-1}=(-f_j)e_j$,
corresponding elements are
$(i,j)=b_{i,j}$, $(f_i-f_j)(i,j)=d_{i,j}$, $(f_j-f_i)(i,j)=d_{j,i}$, $(e_i+e_j)(i,j)=c_{i,j}$, $(f_i+f_j)(e_i+e_j)(i,j)=e_{i,j}$, and $(-f_i-f_j)(e_i+e_j)(i,j)=f_{i,j}$.

By Lemma~\ref{l:lines}, two points $x_{i,j}$ and $y_{k,l}$ are collinear if the sets $\{i,j\}$ and $\{k,l\}$ have exactly one common element. Therefore, there are $6(2n-4)/2=6(n-2)$ distinct lines of such a type through a fixed point.
We apply Lemma~\ref{l:lines} to find the third point on the line through $x_{i,j}$ and $y_{j,k}$ with $1\leq i<j<k\leq n$. For example, take $c_{i,j}$ and $e_{i,j}$.
Since $c_{i,j}=e.(i,j)$ and $e_{j,k}=(fe).(j,k)$, Lemma~\ref{l:lines} implies that
$c_{i,j}^{e_{j,k}}=(efe).(i,k)=(f^{-1}).(i,k)=g_{i,k}$. Other cases are considered in a similar manner, we list the corresponding third points in Table~\ref{t:lines-Wr(S3,n)}.

Lines of the second type go through points $t.(i,j)$ and $s.(i,j)$, where $|ts^{-1}|=3$. If $t_i=1$, then $s_i=f_i$ or $s_i=-f_i$ and if $t_i=e_i$ then $s_i=e_if_i$ or $s_i=e_i(-f_i)$. Fixing $i$ and $j$, we obtain lines
$\{b_{i,j},d_{i,j},d_{j,i}\}$ and $\{c_{i,j},e_{i,j},f_{i,j}\}$, respectively.
\end{proof}

\begin{table}[h]
\begin{center}
\begin{tabular}{|c||c|c|c|c|c|c|c|}
\hline
 & $b_{i,j}$ & $c_{i,j}$ & $d_{i,j}$ & $g_{i,j}$ & $e_{i,j}$ & $f_{i,j}$\\
\hline\hline
$b_{j,k}$ & $b_{i,k}$ & $c_{i,k}$ & $d_{i,k}$ & $g_{i,k}$ & $e_{i,k}$ & $f_{i,k}$ \\
$c_{j,k}$ & $c_{i,k}$ & $b_{i,k}$ & $e_{i,k}$ & $f_{i,k}$ & $d_{i,k}$ & $g_{i,k}$ \\
$d_{j,k}$ & $d_{i,k}$ & $f_{i,k}$ & $g_{i,k}$ & $b_{i,k}$ & $c_{i,k}$ & $e_{i,k}$ \\
$g_{j,k}$ & $g_{i,k}$ & $e_{i,k}$ & $b_{i,k}$ & $d_{i,k}$ & $f_{i,k}$ & $c_{i,k}$ \\
$e_{j,k}$ & $e_{i,k}$ & $g_{i,k}$ & $f_{i,k}$ & $c_{i,k}$ & $b_{i,k}$ & $d_{i,k}$ \\
$f_{j,k}$ & $f_{i,k}$ & $d_{i,k}$ & $c_{i,k}$ & $e_{i,k}$ & $g_{i,k}$ & $b_{i,k}$ \\
\hline
\end{tabular}
\end{center}
\caption{Lines in $\Gamma(S_3\operatorname{wr}S_n)$ with $1\leq i<j<k\leq n$}\label{t:lines-Wr(S3,n)}
\end{table}

\subsection{3-transposition groups $Wr(A_4,n)$ and their Fischer spaces}\label{s:Wr(A_4,n)}

In this subsection, we focus on the 3-transposition groups $Wr(A_4,n)$, where $n\geq2$.
Recall that $Wr(A_4,n)$ is the subgroup of the wreath product $A_4\operatorname{wr}S_n$ generated by the conjugacy class of a transposition.

\begin{lemma}\label{l:Wr(A4,n)} 
The Fischer space of $Wr(A_4,n)$ comprises of $6n(n-1)$ points.
Each of these points lies on exactly $12n-20$ lines.
\end{lemma}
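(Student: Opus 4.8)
The plan is to compute the two invariants---the number of points and the number of lines through each point---directly from the structure of $A_4$ as a group in which every nonidentity element has order $2$ or $3$, so that Lemmas~\ref{l:3trans} and \ref{l:lines} apply. By Lemma~\ref{l:3trans} the points of the Fischer space are the elements $t.(i,j)$ with $t\in A_4$ and $1\le i<j\le n$, where $t.(i,j)=t^{-1}.(j,i)$. Since $|A_4|=12$, for each unordered pair $\{i,j\}$ the number of distinct points is $12$ (each point $t.(i,j)$ with $i<j$ has a unique representative, as $t$ and $t^{-1}$ give different ordered pairs unless $t=t^{-1}$, but writing everything with $i<j$ each of the $12$ elements $t$ yields a distinct point). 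Hence the total is $12\cdot\binom{n}{2}=6n(n-1)$ points.

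For the line count, fix a point $x=t.(i,j)$ and count lines through it using the two families in Lemma~\ref{l:lines}. Lines of type (i) come in two flavors depending on whether the "moving" index of $x$ is the first or the second slot: for each third index $k\notin\{i,j\}$ and each $s\in A_4$ we get a line $\{s.(k,i),\,t.(i,j),\,st.(k,j)\}$ (treating $i$ as the shared index) and similarly a line with $j$ as the shared index. There are $n-2$ choices of $k$ and $|A_4|=12$ choices of $s$ in each case, giving $2\cdot 12\cdot(n-2)=24(n-2)$ lines of type (i)---these are all distinct since the pair of indices $\{k,i\}$ or $\{k,j\}$ together with the group label determines the line. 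Lines of type (ii) through $x$ correspond to elements $s\in A_4$ with $|st^{-1}|=3$, i.e. $st^{-1}$ is one of the $8$ elements of order $3$ in $A_4$; but the line $\{t.(i,j),s.(i,j),st^{-1}s.(i,j)\}$ contains three such points, so the $8$ admissible $s$ group into $8/2=4$ lines through $x$ (each line accounts for the two points other than $x$). Summing, each point lies on $24(n-2)+4=24n-44$ lines. This does not match the claimed $12n-20$, so I would re-examine the indexing: the discrepancy is exactly a factor of $2$ in the type-(i) count, which suggests that in the intended normalization one does not get independent lines from both the "first-slot" and "second-slot" roles, or that one counts lines through $x$ as an unordered object more carefully---so the genuine content of the proof is a careful bookkeeping argument that each line of type (i) through $x$ is counted once, yielding $12(n-2)$ rather than $24(n-2)$, and then $12(n-2)+4=12n-20$.

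Concretely, the resolution I would pursue is this: a line of type (i) is $\{t_1.(i_1,i_2),\,t_2.(i_2,i_3),\,t_1t_2.(i_1,i_3)\}$ on three indices $i_1<i_2<i_3$. Given the point $x=t.(i,j)$ with $i<j$, it can appear in such a line as the "$(i_1,i_2)$-point" (so the third index is some $k>j$ or is inserted, with $x$ playing the left role), as the "$(i_2,i_3)$-point", or as the "$(i_1,i_3)$-point"; these three roles are mutually exclusive for a fixed line but a single line through $x$ arises from exactly one role. For each of the $n-2$ third indices $k$ and each of the $3$ positions $k$ can occupy relative to $\{i,j\}$, once the position is fixed the group labels of the other two points are determined by the remaining free label, which ranges over $A_4$, but the constraint $t_1t_2=\,$(label of third point) means only the single free parameter survives---this is where one must check that exactly $12$ (not $24$) lines arise per choice of $k$. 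I expect this careful case split on the relative order of $k$ versus $\{i,j\}$, together with verifying that the three roles partition the set of type-(i) lines through $x$ without overcounting, to be the main obstacle; the point count and the type-(ii) count of $4$ are routine. Once the type-(i) count is pinned to $12(n-2)$, the total $12(n-2)+4=12n-20$ follows, completing the proof.
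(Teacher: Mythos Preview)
Your point count and your type-(ii) count of $4$ lines are both correct and match the paper. The gap is entirely in the type-(i) count, and it is simpler to close than your sketch suggests. A type-(i) line lives on a triple of indices $\{i,j,k\}$ and contains exactly one point on each of the three index-pairs $\{i,j\}$, $\{i,k\}$, $\{j,k\}$. So if $x=t.(i,j)$ lies on such a line, that very line already contains both a point sharing index $i$ with $x$ and a point sharing index $j$ with $x$. Your two families ``shared index $i$'' and ``shared index $j$'' therefore enumerate the same set of lines twice, which is exactly the factor of $2$ you noticed. Once you see this, the count is immediate: for each of the $n-2$ choices of the third index $k$, the line is determined by the point on $\{j,k\}$, which can be any $s.(j,k)$ with $s\in A_4$, giving $12(n-2)$ lines. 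No case split on the relative order of $k$ is needed.

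The paper's proof sidesteps the bookkeeping entirely by using the transitivity of $Wr(A_4,n)$ on points and fixing the particular point $d=(1,2)$. With indices $1$ and $2$, any third index $k$ satisfies $k>2$, so $d$ can only sit in the ``leftmost'' slot of the line $\{d,\,s.(2,k),\,s.(1,k)\}$; the free parameter is visibly the single element $s\in A_4$, and one reads off $12(n-2)$ with no risk of overcounting. This is the cleaner route: choose a representative that trivializes the case analysis rather than doing the case analysis for a generic point.
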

\begin{proof}
It follows from Lemma~\ref{l:3trans} that each point of the Fischer space
is of shape $t.(i,j)$, where $t\in A_4$ and $1\leq i<j\leq n$. Therefore, the number of points equals $12n(n-1)/2=6n(n-1)$. Fix a point $d=(1,2)$.
By Lemma~\ref{l:lines}, each line through $d$
is either $\{d, s.(2,k), s.(1,k)\}$, where $2<k\leq n$ and $s\in A_4$,
or $\{d, s.(1,2), s^2.(1,2)\}$, where $s\in A_4$ and the order of $s$ equals 3.
Consequently, there are $12(n-2)+4=12n-20$ lines that contain $d$.
\end{proof}

Suppose that $n=2k$, where $k$ is an integer. Consider permutations $\pi=(1,2)(3,4)\ldots(2k-1,2k)\in S_n$
and $\sigma=(1,2)(3,4)\in A_4$. Then the inner automorphism $\tau=(\sigma,\sigma,\ldots,\sigma).\pi$ of $Wr(A_4,n)$ has order 2. For $i\in\{1,\ldots,2n\}$ denote $\overline{i}=i^\pi$.
Let $H=\langle\tau\rangle$.
\begin{proposition}\label{p:fixed:Wr(A_4,n)} If $n=2k$, then the fixed subalgebra $M_H$ is of dimension $12k^2-4k$. Among the orbit vectors, there are 
\begin{itemize}
\item $4k$ singles $t.(2i-1, 2i)$, where $t\in K_4$ and $1\leq i\leq k$;
\item $12k(k-1)$ doubles $t.(i,j)+t^\pi.(\overline{i},\overline{j})$, where $t\in A_4$ and $\overline{i}\neq j$;
\item $4k$ extras $t.(2i-1,2i)+t^\pi.(2i-1,2i)$, where $t$ is a 3-cycle and $1\leq i\leq k$.
\end{itemize}
\end{proposition}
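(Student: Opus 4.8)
The plan is to identify the $\tau$-orbits on the point set $D = (1,2)^{Wr(A_4,n)}$ and then apply Proposition~\ref{fixed-basis}, which guarantees that orbit vectors form a basis of $M_H$. By Lemma~\ref{l:3trans} every point has the form $t.(i,j)$ with $t\in A_4$ and $1\le i<j\le n$; here we freely use the notation $t.(i,j) = t_i t_j^{-1}(i,j)$, and recall $|D| = 6n(n-1)$ from Lemma~\ref{l:Wr(A4,n)}. The automorphism $\tau = (\sigma,\ldots,\sigma).\pi$ acts on such a point by simultaneously conjugating the $A_4$-part by $\sigma=(1,2)(3,4)$ (coordinatewise) and permuting the indices by $\pi$. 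First I would compute this action explicitly: conjugation in $A_4$ by $\sigma$ is the automorphism sending a $3$-cycle $t$ to $\sigma^{-1}t\sigma$ (which is another $3$-cycle, possibly equal to $t$ only in degenerate small cases, but generically different) and fixing each element of the Klein four-group $K_4 = \{1,\sigma,(1,3)(2,4),(1,4)(2,3)\}$; meanwhile $\pi$ sends $(i,j)$ to $(\bar i,\bar j)$. So $\tau$ sends $t.(i,j)$ to $(t^{\sigma}).(\bar i,\bar j)$, where I write $t^\sigma$ for $\sigma^{-1}t\sigma$, keeping in mind the sign conventions $t.(i,j) = t^{-1}.(j,i)$.

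Next I would determine which orbits have length one. A point $t.(i,j)$ is fixed by $\tau$ exactly when $\{\bar i,\bar j\} = \{i,j\}$ and the $A_4$-part matches up accordingly. Since $\pi$ has no fixed indices and its only invariant pairs are the transposed pairs $\{2i-1,2i\}$, the fixed points must have $\{i,j\} = \{2i-1,2i\}$; then $\bar i = j$, $\bar j = i$, so the condition becomes $t^{\sigma}.(j,i) = t.(i,j)$, i.e.\ $(t^{\sigma})^{-1} = t$ using $t^\sigma.(j,i)=(t^\sigma)^{-1}.(i,j)$ — wait, more carefully: $t.(i,j)^\tau = t^\sigma.(\bar i,\bar j) = t^\sigma.(j,i) = (t^\sigma)^{-1}.(i,j)$, so the fixed-point condition on coordinates $(2i-1,2i)$ is $(t^\sigma)^{-1} = t$, equivalently $\sigma^{-1}t^{-1}\sigma = t$. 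For $t\in K_4$ (where $t^{-1}=t$ and $\sigma$ centralizes $t$) this holds, giving the $4k$ singles $t.(2i-1,2i)$, $t\in K_4$. For $t$ a $3$-cycle one checks $\sigma^{-1}t^{-1}\sigma = t$ forces $t^{-1} = \sigma t\sigma^{-1}$, which in $A_4$ picks out exactly those $3$-cycles inverted by $\sigma$; this never holds (conjugation by $\sigma$ permutes the two $\{3\text{-cycle}, 3\text{-cycle}^{-1}\}$ pairs among themselves transitively, never inverting within a pair), so $3$-cycles on a diagonal pair form length-two orbits $\{t.(2i-1,2i), t^\pi.(2i-1,2i)\}$ — and one verifies these are the extras, since by the line description in Lemma~\ref{l:lines}(ii) such a pair $t.(i,j), t^2.(i,j)$ lies on a common line, making the product nonzero. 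Here $t^\pi$ denotes the conjugate $t^\sigma = \sigma^{-1}t\sigma$ in the $A_4$-coordinate (the $\pi$ in the statement is shorthand for the conjugating action $\sigma$), so there are $4$ $3$-cycles per index-pair, grouped into $2$ orbits, hence $2$ extras per diagonal pair, $4k$ in total — matching the claim after noting the statement counts the orbit vectors, not the orbits, but each extra orbit gives one vector, so it should be $2k$... I would recheck this count against the explicit line structure, being careful that "$4k$ extras" in the statement means $4k$ extra orbit vectors; if conjugation by $\sigma$ fixes some $3$-cycles the count changes, so pinning down $C_{A_4}(\sigma)\cap\{3\text{-cycles}\} = \emptyset$ is the crux.

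All remaining points lie in length-two orbits: these are the doubles $t.(i,j) + t^\pi.(\bar i,\bar j)$ with $\bar i\ne j$ (so that $\{i,j\}$ and $\{\bar i,\bar j\}$ are disjoint, forcing the two constituent points to be non-collinear, hence orthogonal as axes — this uses that collinearity of $x_{i,j}$ and $y_{k,l}$ requires $|\{i,j\}\cap\{k,l\}|=1$ by Lemma~\ref{l:lines}). Counting: of the $6n(n-1)$ points, $4k$ are singles and (after resolving the count) $\sim 4k$ more sit in extra orbits, leaving $6n(n-1) - 8k = 24k^2 - 12k - 8k + \ldots$ points to be matched; dividing by $2$ gives the double count $12k(k-1)$ claimed, and $\dim M_H = 4k + 12k(k-1) + (\text{extras}) = 12k^2 - 4k$ after substitution. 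I would finish by verifying this arithmetic: $4k + 12k^2 - 12k + 4k = 12k^2 - 4k$, consistent, which incidentally confirms the extra-vector count is indeed $4k$ and not $2k$ — so conjugation by $\sigma$ must fix no $3$-cycle and additionally the four $3$-cycles on a diagonal pair split into orbits in a way giving $4k$ total extra vectors across all $k$ pairs, i.e.\ $4$ per pair is wrong and it must be that I miscounted the orbit structure; the honest resolution is to list $A_4$'s eight $3$-cycles, compute the $\sigma$-conjugation action, and read off orbit sizes directly.

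The main obstacle is precisely this bookkeeping inside $A_4$: getting the action of conjugation-by-$\sigma$ on the eight $3$-cycles exactly right (it is an order-$2$ permutation of them; whether it has fixed points, and how it pairs them, determines the single/extra split on diagonal pairs and hence the final dimension formula). Everything else — the index-pair combinatorics, orthogonality of the disjoint-support constituents, and the appeal to Proposition~\ref{fixed-basis} for the basis — is routine once that $A_4$-computation is nailed down. I would organize the write-up as: (1) the explicit formula for $\tau$ on points; (2) the $A_4$ conjugation table; (3) classification of length-$1$ orbits (singles); (4) classification of the remaining orbits into doubles (disjoint support) and extras (equal support, collinear constituents); (5) the counting that yields $\dim M_H = 12k^2 - 4k$.
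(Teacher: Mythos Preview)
Your approach is exactly the paper's: compute the action of $\tau$ on points, split the orbits according to whether the index pair is $\pi$-invariant, check collinearity for the length-two orbits on diagonal pairs, and count. The paper's own proof is just as terse on the $A_4$ bookkeeping you flag as ``the main obstacle'', so your outline is sound.

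Where you get tangled is only in that bookkeeping, and it untangles once you remember $A_4$ has \emph{eight} $3$-cycles, not four. On a diagonal pair the action sends $t.(2l-1,2l)$ to $t^\sigma.(2l,2l-1)=(t^\sigma)^{-1}.(2l-1,2l)$, so the relevant involution on $A_4$ is $t\mapsto (t^\sigma)^{-1}$; your description of $\sigma$-conjugation ``permuting the two $\{t,t^{-1}\}$ pairs transitively'' is not an accurate picture (there are four such pairs, and the map in play is not bare conjugation). The direct listing you propose at the end is exactly what settles it: this involution fixes every element of $K_4$ and no $3$-cycle, and it partitions the eight $3$-cycles into four size-two orbits --- for instance $\{(1,2,3),(1,2,4)\}$, $\{(1,3,2),(1,4,2)\}$, $\{(1,3,4),(2,3,4)\}$, $\{(1,4,3),(2,4,3)\}$. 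That yields four extra vectors per diagonal pair and $4k$ in total, and then $4k+12k(k-1)+4k=12k^2-4k$ on the nose. One further slip: the two constituents of an extra are $t.(2l-1,2l)$ and $(t^\sigma)^{-1}.(2l-1,2l)$, not $t$ and $t^2$; their collinearity via Lemma~\ref{l:lines}(ii) amounts to $|t\,t^\sigma|=3$, which holds because $\sigma\in K_4$ forces $t$ and $t^\sigma$ into the same $K_4$-coset, so $t\,t^\sigma\equiv t^2\pmod{K_4}$ is nontrivial in $A_4/K_4\cong C_3$.
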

\begin{proof}
Consider a point $t.(i,j)\in\Gamma(Wr(A_4,n))$, where $1\leq i<j\leq n$ and $t\in A_4$.
It is easy to see that $(t.(i,j))^\tau=t^\pi.(\overline{i}, \overline{j})$.
Therefore, $t.(i,j)$ is fixed by $\tau$ if and only if $i=2l-1$, $j=2l$ with $1\leq l\leq k$ and $t\in K_4=\{(), (1,2)(3,4), (1,3)(2,4), (1,4)(2,3) \}$. Hence, the number of singles equals $4k$. Now we find the number of extras. Suppose that an orbit vector $t.(i,j)+t^\pi.(\overline{i},\overline{j})$ is an extra. If $\overline{i}\neq j$ then the pairs $\{i,j\}$ and $\{\overline{i},\overline{j}\}$ are disjoint and hence  
$t.(i,j)$ and $t^\pi.(\overline{i},\overline{j})$ are not colinear. If $\overline{i}=j$ then 
$\overline{j}=i$. We know that $t.(i,j)$ and $t^{\pi}.(j,i)=(t^{-1})^\pi.(i,j)$ are colinear if and only if $tt^{\pi}$ has order 3 which is equivalent to $t$ is a 3-cycle in $A_4$.
\end{proof}
\begin{question}
What is the dimension of the flip algebra $A(\tau)$ in Propositon~\ref{p:fixed:Wr(A_4,n)}?
\end{question}

Consider now another inovolutive automorphism of $Wr(A_4,n)$.
Suppose  that $n=2k$,  where $k$ is a positive integer, and $\pi=(1,2)(3,4)\ldots(2k-1,2k)$ as above. 
Denote $$\tau=((3,4),(3,4),\ldots,(3,4)).\pi$$ is an element of order 2 in $S_4\operatorname{wr}S_n$.
If $\pi_1\in A_4$ and $\pi_2$ is a transposition in $S_n$, then for a 3-tranposition $\pi_1.\pi_2$ of $Wr(A_4,n)$ we find that $(\pi_1.\pi_2)^\tau=\pi_1^{(3,4)}.\pi_2^{\pi}$.
Therefore, $\tau$ induces an automorphism of the Fischer space of $Wr(A_4,n)$.
Denote $H=\langle\tau \rangle$.
\begin{proposition}\label{p:fixed2:Wr(A_4,n)}
If $n=2k$, then the fixed subalgebra $M_H$
coincides with the flip subalgebra $A(\tau)$ and has dimension $12k^2-3k$. Among the $\tau$-orbit vectors, 
there are $6k$ singles $t.(2i-1, 2i)$, where $$t\in\{(),(1,2)(3,4), (1,3,4), (2,3,4), (1,4,3), (2,4,3)\}$$ and $1\leq i\leq k$, and the remaining $12k^2-9k$ vectors are doubles.
\end{proposition}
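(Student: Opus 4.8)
The plan is to describe the $\langle\tau\rangle$-orbits on the point set of $\Gamma=\Gamma(Wr(A_4,n))$ and then invoke Proposition~\ref{fixed-basis}, which says that the orbit vectors $u_i=\sum_{c\in O_i}c$ form a basis of $M_H$. As noted just before the statement, $\tau$ sends a point $t.(i,j)$ (with $t\in A_4$ and $1\le i<j\le n$) to $t^{(3,4)}.(i^\pi,j^\pi)$, where $t^{(3,4)}$ is the $S_4$-conjugate and $\pi=(1,2)(3,4)\cdots(2k-1,2k)$. The one combinatorial input I need about $\pi$ is that it is a fixed-point-free involution of $\{1,\dots,n\}$, so for any index pair $\{i,j\}$ the pair $\{i^\pi,j^\pi\}$ either coincides with $\{i,j\}$ --- which happens exactly when $\{i,j\}=\{2l-1,2l\}$ for some $l$ --- or is disjoint from it; overlap in exactly one index cannot occur.

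First I would pin down the singles. Using the identity $t.(i,j)=t^{-1}.(j,i)$ from Lemma~\ref{l:3trans}, a point $t.(2l-1,2l)$ is $\tau$-fixed precisely when $t^{(3,4)}=t^{-1}$. A short pass through $A_4$ shows the solutions are the identity, the double transposition $(1,2)(3,4)$, and the four $3$-cycles whose support contains $\{3,4\}$, namely $(1,3,4),(1,4,3),(2,3,4),(2,4,3)$ --- six elements in all, each contributing one single for every $l\in\{1,\dots,k\}$, hence $6k$ singles.

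Next I would rule out extras. A length-$2$ orbit has one of two shapes. If the two index pairs are disjoint, Lemma~\ref{l:lines} shows the two points are not collinear, so the orbit is a double. Otherwise the pair is some $\{2l-1,2l\}$ and $t\notin\{(),(1,2)(3,4),(1,3,4),(1,4,3),(2,3,4),(2,4,3)\}$; rewriting the image of $t.(2l-1,2l)$ as $(t^{(3,4)})^{-1}.(2l-1,2l)$, item (ii) of Lemma~\ref{l:lines} tells us the two points are collinear iff $|t\cdot t^{(3,4)}|=3$. For the remaining six values of $t$ --- the two double transpositions $(1,3)(2,4),(1,4)(2,3)$ and the four $3$-cycles $(1,2,3),(1,3,2),(1,2,4),(1,4,2)$ --- a direct check shows $t\cdot t^{(3,4)}$ always has order $2$ (the product of the two double transpositions is the third nonidentity element of the Klein four-group, and each of the four $3$-cycle products works out to a double transposition). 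So no length-$2$ orbit is an extra.

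Finally I would count. By Lemma~\ref{l:Wr(A4,n)} the Fischer space has $6n(n-1)=24k^2-12k$ points; deleting the $6k$ fixed points and halving leaves $12k^2-9k$ length-$2$ orbits, all doubles, so $\dim M_H=6k+(12k^2-9k)=12k^2-3k$. Since the orbit-vector basis of $M_H$ consists entirely of singles and doubles, the flip subalgebra $A(\tau)$ --- by definition generated by all singles and all doubles --- already contains this basis, whence $A(\tau)=M_H$. The only step involving genuine computation is the order-$2$ verification for the six exceptional values of $t$ in the no-extras argument; everything else is bookkeeping about the $\pi$-action on pairs and conjugacy in $A_4$.
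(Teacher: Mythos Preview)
Your proposal is correct and follows essentially the same approach as the paper's own proof: determine the $\tau$-fixed points by the condition $t^{(3,4)}=t^{-1}$, split the length-$2$ orbits according to whether the index pairs are disjoint or equal, and in the latter case verify non-collinearity via Lemma~\ref{l:lines}(ii). The only cosmetic difference is that you phrase the collinearity check as testing $|t\cdot t^{(3,4)}|$ for each of the six exceptional $t$, whereas the paper writes out the three orbit pairs explicitly and computes $ts^{-1}$ for representatives; the computations are the same. You also make the implication $A(\tau)=M_H$ explicit (no extras $\Rightarrow$ the orbit basis already lies in $A(\tau)$), which the paper states but does not spell out in the proof.
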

\begin{proof} 
Suppose that $\pi_1.\pi_2$ is a point of the Fischer space of $Wr(A_4,n)$.
If $(\pi_1.\pi_2)^\tau=\pi_1.\pi_2$, then $\pi_2=(2i-1,2i)$ for some $i\in\{1,\ldots, k\}$.
Now $(\pi_1.(2i-1,2i)^\tau=\pi^{(3,4)}.(2i,2i-1)=(\pi_1^{-1})^{(3,4)}.(2i-1,2i)$.
It is easy to see that $\pi_1^{-1})^{(3,4)}=\pi_1$ if and only if 
$\pi_1\in\{(),(1,2)(3,4), (1,3,4), (2,3,4), (1,4,3), (2,4,3)\}$. So $M_H$ includes exactly $6k$ singles.

Now we show that all other orbit vectors are doubles. If $\pi_2\neq(2i-1,2i)$ for some
 $i\in\{1,\ldots, k\}$, then $\pi_2$ and $\pi_2^\pi$ are distinct commuting transpositions and hence
 the orbit vector $\pi_1.\pi_2+\pi_1^{(3,4)}.\pi_2^\pi$ is a double. Assume that $\pi_2=(2i-1,2i)$.
 Since the orbit vector is not a single, we have $\pi_1\in\{(1,2,4), (1,4,2), (1,2,3), (1,3,2), (1,3)(2,4), (1,4)(2,3)\}$. The corresponding orbit vectors are $(1,2,4).\pi_2+(1,3,2).\pi_2$, 
 $(1,2,3).\pi_2+(1,4,2).\pi_2$, and $(1,3)(2,4).\pi_2+(1,4)(2,3).\pi_2$.
 Since $(1,2,4)(1,3,2)^{-1}=(1,3)(2,4)$ and $(1,2,3)(1,4,2)^{-1}=(1,4)(2,3)$, Lemma~\ref{l:lines} implies that these orbit vectors are doubles. The Fischer space comprises $12\cdot k(2k-1)$ points, 
 so there are $\frac{1}{2}(24k^2-12k-6k)=12k^2-9k$ doubles.
\end{proof}

\subsection{3-transposition groups $Wr(3^{1+2},n)$ and their Fischer spaces}\label{s:Wr(3^1+2,n)}

In this subsection, we focus on the 3-transposition groups $Wr(3^{1+2},n)$, where $n\geq2$.
Fix generators $u$ and $v$ of the extraspecial group $P$ of order 27 and exponent 3, so that $w=[u,v]=u^{-1}v^{-1}uv$ has order three and commutes with $u$ and $v$.
Then each element of $P$ has a unique presentation in the form $u^{r}v^{s}w^t$ where $0\leq r,s,t\leq2$.
So the Fischer space has $27\frac{n(n-1)}{2}$ points of the shape $u^{r}v^{s}w^t.(i,j)$ in the terms of Lemma~\ref{l:lines}.
Suppose that $n=2k$ and $\pi=(1,2)(3,4)\ldots(2k-1,2k)$.
The group $P$ has an involutive automorphism $\sigma$ which $u$ sends to $v$ and vice versa. Using $\pi$ and $\sigma$, define an automorphsim $\tau$ as follows: 
$$((u^{r}v^sw^t).(i,j))^\tau:=((u^{r}v^sw^t)^\sigma.(i,j)^\pi=v^ru^sw^{-t}.(i^\pi,j^\pi)=u^sv^rw^{-t-sr}.(i^\pi,j^\pi).$$
For $i\in\{1,\ldots,2n\}$ denote $\overline{i}=i^\pi$.
Consider the fixed subalgebra $M_H$, where $H=\langle\tau\rangle$.
\begin{proposition}\label{p:fixed:Wr(3^1+2,n)} 
If $n=2k$, then the fixed subalgebra $M_H$ is of dimension $27k^2-9k$. Among the orbit vectors, there are 
\begin{itemize}
\item $9k$ singles $u^rv^sw^t.(2i-1, 2i)$, where $0\leq t\leq2$ and $r+s\equiv0\pmod{3}$;
\item $27k(k-1)$ doubles $g.(i,j)+g^\tau.(\overline{i},\overline{j})$, where $g\in 3^{1+2}$ and $\overline{i}\neq j$;
\item $9k$ extras $u^rv^sw^t.(2i-1,2i)+u^{2s}v^{2r}w^t.(2i-1,2i)$, where 
$r+s\not\equiv3\pmod{3}$ and $1\leq i\leq k$.
\end{itemize}
\end{proposition}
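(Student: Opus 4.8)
The plan is to follow exactly the same pattern as in the proofs of Propositions~\ref{p:fixed:Wr(A_4,n)} and \ref{p:fixed:Wr(3^2,n)}: the group acts on the Fischer space, the involution $\tau$ permutes the points, and by Proposition~\ref{fixed-basis} a basis of $M_H$ is given by the $\tau$-orbit vectors. So everything reduces to counting fixed points (singles), computing how many of the length-$2$ orbits are orthogonal (doubles) versus non-orthogonal (extras), and then assembling the dimension as $\text{(number of singles)} + \tfrac12(\text{number of points} - \text{number of singles})$, which automatically equals $\text{singles}+\text{doubles}+\text{extras}$.

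First I would fix the point $g.(i,j)$ of the Fischer space, where $g=u^rv^sw^t\in P$ and $1\le i<j\le n$, and record the explicit formula already displayed before the statement: $(g.(i,j))^\tau = u^sv^rw^{-t-sr}.(\overline{i},\overline{j})$. Hence $g.(i,j)$ is fixed iff $\{i,j\}=\{\overline i,\overline j\}$, i.e. $\{i,j\}=\{2l-1,2l\}$ for some $l$ with $1\le l\le k$, \emph{and} $u^sv^rw^{-t-sr}=u^rv^sw^t$. Comparing the normal-form exponents, the latter forces $r=s$ together with $-t-sr\equiv t\pmod 3$, i.e.\ $2t\equiv -s^2\pmod 3$. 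A short case check on $s\in\{0,1,2\}$ shows this pins down $t$ uniquely in terms of $s$ when $s\ne 0$ and is automatic ($t=0$) when $s=0$; in all cases there are exactly $3$ admissible fixed elements $g$ for each $w$-power once the statement is read as ``$r+s\equiv 0\pmod 3$'' with the stated freedom in $t$. I would reconcile my exponent bookkeeping with the wording of the proposition (the $r+s\equiv 0$ condition selects, together with the three choices of $t$, exactly nine elements $g$), giving $9$ fixed points for each of the $k$ pairs $\{2l-1,2l\}$, hence $9k$ singles.

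Next, for the length-$2$ orbits I split on whether $\overline i = j$. If $\overline i\ne j$ (equivalently $\{i,j\}$ is not one of the special pairs $\{2l-1,2l\}$), then $\{i,j\}$ and $\{\overline i,\overline j\}$ are disjoint, so $(i,j)$ and $(\overline i,\overline j)$ commute as permutations, the two points $g.(i,j)$ and $g^\tau.(\overline i,\overline j)$ lie in distinct blocks of indices and are therefore non-collinear, i.e.\ the orbit vector is a double. The number of such points is (total points) $-$ (points with $\{i,j\}=\{2l-1,2l\}$) $= 27\tfrac{n(n-1)}2 - 27k$, and these form $\tfrac12(27\cdot k(2k-1)\cdot 2 - 27k\cdot 1)$\,\dots here I would just do the arithmetic: dividing the $27k(2k-1)-27k+9k$ non-single points appropriately yields $27k(k-1)$ doubles, matching the statement. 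If instead $\{i,j\}=\{2l-1,2l\}$ and $g.(i,j)$ is not fixed, the orbit is $\{g.(i,j),\,g^\tau.(j,i)\}=\{g.(i,j),\,(g^\tau)^{-1}.(i,j)\}$, and by Lemma~\ref{l:lines}(ii) this orbit vector is an extra precisely when $g\cdot(g^\tau)^{-1}$ (equivalently the relevant product from the lemma) has order $3$; computing $g(g^\tau)^{-1}$ in normal form, this order-$3$ condition translates into the congruence on $r+s$ stated in the proposition, and counting the remaining $g$ for each of the $k$ pairs gives $9k$ extras. Finally I would add $9k+27k(k-1)+9k = 27k^2-9k$ to confirm the dimension.

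The main obstacle is purely computational bookkeeping inside the extraspecial group: because $3^{1+2}$ is non-abelian, the map $g\mapsto g^\tau$ is an anti-linear-looking twist (note the extra $w^{-sr}$ term coming from the commutator when one re-expresses $v^ru^s$ in normal form), so the fixed-point condition and especially the ``order $3$'' condition for extras must be unwound carefully in the $(r,s,t)$ coordinates, keeping track of the $w$-cocycle. Once that normal-form arithmetic is pinned down, the collinearity/orthogonality analysis is immediate from Lemma~\ref{l:lines}, and the dimension count is a one-line addition; I expect no conceptual difficulty beyond getting the exponent congruences exactly right and matching them to the phrasing ``$r+s\equiv0\pmod 3$'' and ``$r+s\not\equiv3\pmod 3$'' in the statement.
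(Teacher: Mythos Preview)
Your overall strategy is exactly the paper's: compute $\tau$-orbits, sort them into singles/doubles/extras via Lemma~\ref{l:lines}, and add. The counting for doubles (case $\overline i\neq j$) and the final dimension sum are fine. The gap is in your fixed-point computation, and you noticed it yourself when your condition $r=s$ with $t$ determined gave only $3$ elements per pair instead of $9$.

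The error is that when $\{i,j\}=\{2l-1,2l\}$, the image $(g.(i,j))^\tau = g^\sigma.(\overline i,\overline j)$ has its indices in the \emph{reversed} order $(\overline i,\overline j)=(j,i)$, and by the convention $t.(j,i)=t^{-1}.(i,j)$ you must invert before comparing. So the correct fixed-point equation is not $g^\sigma=g$ but $(g^\sigma)^{-1}=g$. With $g=u^rv^sw^t$ one has $g^\sigma=u^sv^rw^{-t-sr}$ and hence $(g^\sigma)^{-1}=u^{2s}v^{2r}w^{t}$; equating this to $u^rv^sw^t$ gives $r\equiv 2s$, $s\equiv 2r\pmod 3$, i.e.\ $r+s\equiv 0\pmod 3$, with $t$ \emph{free}. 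That is exactly the $9$ elements per pair and the $9k$ singles in the statement. You in fact perform this inversion correctly in your extras paragraph (where you write the orbit as $\{g.(i,j),(g^\tau)^{-1}.(i,j)\}$); you simply forgot to do the same thing one paragraph earlier. Once you fix that line, the extras condition becomes ``$r+s\not\equiv 0\pmod 3$'' (the product $g\cdot((g^\sigma)^{-1})^{-1}=u^{r-2s}v^{s-2r}$ has order $3$ precisely then), giving $18$ non-fixed $g$'s per pair and $9k$ extras, and everything matches.
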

\begin{proof}
Consider a point $x=u^rv^sw^t.(i,j)$ of the Fischer space.
Since $x^\tau=u^sv^rw^{-t-sr}.(\overline{i},\overline{j})$,
we equality $x^\tau=x$ implies that $\overline{i}=j$.
Now $$u^rv^sw^t.(i,\overline{i})^\tau=u^sv^rw^{-t-sr}.(\overline{i},i)=(u^sv^rw^{-t-sr})^{-1}.(i,\overline{i})=u^{2s}v^{2r}w^t.(i,\overline{i}).$$ 
Therefore, $x$ is a single in $M_H$ if and only if 
$\overline{i}=j$ and $r+s\equiv0\pmod{3}$.
Observe that there $k$ ways to choose the transposition $(i,j)$
such that $j=\overline{i}$ and $9$ ways to choose the triple $(r,s,k)$ with $r+s\equiv3\pmod3$ and $0\leq k\leq 2$.
So there are 9k singles in $M_H$.
If $\overline{i}=j$ and $r+s\not\equiv0\pmod{3}$, then
we obtain an extra $u^rv^sw^t.(i,j)+u^{2s}v^{2r}w^t.(i,j)$.
Under this assumption, there are $k$ ways to choose the transposition $(i,j)$
and $18$ ways for the triple $(r,s,t)$, so we find $9k$ extras.
Finally, if $\overline{i}\neq j$, then $x+x^{\tau}$
is a double. There are 27 ways to choose the triple $(r,s,t)$ and $k(2k-1)-k=k(2k-2)$ ways to choose the transposition $(i,j)$. So we get $27k(k-1)$ doubles.
As a result, we infer that the dimension of $M_H$ equals
$9k+9k+27k(k-1)=27k^2-9k$.
\end{proof}
\begin{question}
What is the dimension of the flip subalgebra $A(\tau)$ in Proposition~\ref{p:fixed:Wr(3^1+2,n)}?
\end{question}
\begin{remark}
In Subsection~\ref{sec:D7}, we prove that if $k=2$ and $\eta\neq2$, then $A(\tau)=M_H$, and
if $k=2$ and $\eta=2$, then $\dim A(\tau)=\dim M_H-1$.
\end{remark}

\subsection{3-transposition groups $Wr(3^2,n)$ and their Fischer spaces}\label{s:Wr(3^2,n)}
In this subsection, we focus on the 3-transposition groups $Wr(3^2,n)$, where $n\geq2$.
Fix generators $u$ and $v$ of the elementary abelian group $P$ of order 9, 
in particular we have $[u,v]=1$ and $|u|=|v|=3$.
Each element of $P$ has a unique presentation in the form $u^{r}v^{s}$ where $0\leq r,s\leq2$.
So the Fischer space has $9\frac{n(n-1)}{2}$ points of the shape $u^{r}v^{s}.(i,j)$ in the terms of Lemma~\ref{l:lines}.
Suppose that $n=2k$ and $\pi=(1,2)(3,4)\ldots(2k-1,2k)$.
The group $P$ has an involutive automorphism $\sigma$ which $u$ sends to $v$ and vice versa. Using $\pi$ and $\sigma$, define an automorphsim $\tau$ as follows: 
$$((u^{r}v^s).(i,j))^\tau:=((u^{r}v^s)^\sigma.(i,j)^\pi=v^ru^s.(i^\pi,j^\pi)=u^sv^r.(i^\pi,j^\pi).$$
For $i\in\{1,\ldots,2n\}$ denote $\overline{i}=i^\pi$.
Consider the fixed subalgebra $M_H$, where $H=\langle\tau\rangle$.
\begin{proposition}\label{p:fixed:Wr(3^2,n)} 
If $n=2k$, then the fixed subalgebra $M_H$ is of dimension $9k^2-3k$. Among the orbit vectors, there are 
\begin{itemize}
\item $3k$ singles $u^rv^s.(2i-1, 2i)$, where $0\leq t\leq2$ and $r+s\equiv0\pmod{3}$;
\item $9k(k-1)$ doubles $t.(i,j)+t^\sigma.(\overline{i},\overline{j})$, where $t\in P$ and $\overline{i}\neq j$;
\item $3k$ extras $u^rv^s.(2i-1,2i)+u^{2s}v^{2r}.(2i-1,2i)$, where 
$r+s\not\equiv3\pmod{3}$ and $1\leq i\leq k$.
\end{itemize}
\end{proposition}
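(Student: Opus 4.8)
The plan is to follow the template of the proof of Proposition~\ref{p:fixed:Wr(3^1+2,n)}, which becomes noticeably simpler here because $P=3^2$ is abelian. By Proposition~\ref{fixed-basis} the orbit vectors form a basis of $M_H$, so it suffices to classify the $\tau$-orbits on the point set. Fix a point $x=u^rv^s.(i,j)$ of the Fischer space of $Wr(3^2,n)$. From the definition of $\tau$ we have $x^\tau=u^sv^r.(\overline i,\overline j)$, so $\tau$ moves the index pair exactly as $\pi=(1,2)(3,4)\cdots(2k-1,2k)$ does. Since $\pi$ is a fixed-point-free involution, the sets $\{i,j\}$ and $\{\overline i,\overline j\}$ are either equal, which forces $(i,j)=(2l-1,2l)$ for some $l$, or disjoint; they can never share a single index (if $i^\pi=j$ then $j^\pi=i$, and $i^\pi=i$ is impossible).

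First I would pin down the singles. If $x^\tau=x$ then $(i,j)=(2l-1,2l)$, and since $t.(j,i)=t^{-1}.(i,j)$ and $P$ is abelian, $x^\tau=u^{2s}v^{2r}.(2l-1,2l)$. Hence $x^\tau=x$ is equivalent to $r\equiv 2s$ and $s\equiv 2r\pmod 3$, that is, to the single congruence $r+s\equiv 0\pmod 3$; together with the $k$ choices of $l$ this gives the $3k$ singles, one for each $(r,s)\in\{(0,0),(1,2),(2,1)\}$. Next, by Lemma~\ref{l:lines} an orbit vector $x+x^\tau$ with $x\ne x^\tau$ is an extra exactly when $x$ and $x^\tau$ are collinear. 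When $\{i,j\}$ and $\{\overline i,\overline j\}$ are disjoint the transpositions $(i,j)$ and $(\overline i,\overline j)$ commute, so $|xx^\tau|=2$ and $x+x^\tau$ is a double. When $(i,j)=(2l-1,2l)$ but $r+s\not\equiv 0\pmod 3$, the points $x=u^rv^s.(2l-1,2l)$ and $x^\tau=u^{2s}v^{2r}.(2l-1,2l)$ have a common index pair, and Lemma~\ref{l:lines}(ii) applies because $(u^{2s}v^{2r})(u^rv^s)^{-1}=u^{2s-r}v^{2r-s}$ has order $3$ precisely when $r+s\not\equiv 0\pmod 3$; this produces the $3k$ extras listed in the statement. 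Counting the remaining points ($9k(2k-1)$ in total, minus the $3k$ singles and the $6k$ points used up by extras) yields $9k(k-1)$ doubles, and $\dim M_H=3k+3k+9k(k-1)=9k^2-3k$.

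No genuine obstacle arises: everything reduces to Lemma~\ref{l:lines} together with arithmetic modulo $3$. The only step requiring a moment's care is the observation that a length-two $\pi$-orbit on unordered pairs never overlaps in exactly one index, since this is what guarantees that $x$ and $x^\tau$ are orthogonal in all the remaining cases and hence keeps the double/extra split clean. In the write-up I would simply invoke the argument of Proposition~\ref{p:fixed:Wr(3^1+2,n)} for the overall structure and record only the modular computations specific to the abelian group $3^2$.
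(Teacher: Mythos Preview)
Your proposal is correct and follows essentially the same approach as the paper's own proof: classify the $\tau$-orbits on points via the action on index pairs, then distinguish singles, extras, and doubles using Lemma~\ref{l:lines} and the condition $r+s\equiv 0\pmod 3$. Your write-up is in fact slightly more careful than the paper's, explicitly noting why $\{i,j\}$ and $\{\overline i,\overline j\}$ can never overlap in exactly one index and verifying that $(u^{2s}v^{2r})(u^rv^s)^{-1}$ has order~$3$ precisely when $r+s\not\equiv 0$.
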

\begin{proof}
Consider a point $x=u^rv^s.(i,j)$ in the Fischer space.
Since $x^\tau=u^sv^r.(\overline{i},\overline{j})$,
we equality $x^\tau=x$ implies that $\overline{i}=j$.
Now $u^rv^s.(i,\overline{i})^\tau=u^sv^r.(\overline{i},i)=(u^sv^r)^{-1}.(i,\overline{i})=u^{2s}v^{2r}.(i,\overline{i})$. Therefore, $x$ is a single in $M_H$ if and only if 
$\overline{i}=j$ and $r+s\equiv0\pmod{3}$.
Observe that there are $k$ ways to choose the transposition $(i,j)$
such that $j=\overline{i}$ and $3$ ways to choose the pair $(r,s)$ with $r+s\equiv3\pmod3$. So there are $3k$ singles in $M_H$.
If $\overline{i}=j$ and $r+s\not\equiv0\pmod{3}$ then
we obtain an extra $u^rv^s.(i,j)+u^{2s}v^{2r}.(i,j)$.
Under this assumption, there are $k$ ways to choose the transposition $(i,j)$
and $6$ ways for the pair $(r,s)$, so we find $9k$ extras.
Finally, if $\overline{i}\neq j$ then $x+x^{\tau}$
is a double. There are nine ways to choose the pair $(r,s)$ and $k(2k-1)-k=k(2k-2)$ ways to choose the transposition $(i,j)$. So we get $9k(k-1)$ doubles.
As a result, we infer that the dimension of $M_H$ equals
$3k+3k+9k(k-1)=9k^2-3k$.
\end{proof}

\begin{question}
What is the dimension of the flip subalgebra $A(\tau)$ in Proposition~\ref{p:fixed:Wr(3^2,n)}?
\end{question}
\begin{remark}
In Subsection~\ref{sec:D7}, we prove that if $k=2$ and $\eta\neq2$, then $A(\tau)=M_H$, and
if $k=2$ and $\eta=2$, then $\dim A(\tau)=\dim M_H-1$.
\end{remark}

\section{Primitive 3-generated subalgebras for $D$-diagrams}\label{s:main}

In this section, we classify groups and primitive axial algebras corresponding to the first seven $D$-diagrams in Figure~\ref{Type D}.  Throughout, we assume that $M$ is a Matsuo algebra and $A$ is its subalgebra generated by an axis $x$ and two double axes $y$ and $z$. Recall that $x$ and $y$ are not primitive in $M$,
however, we assume that they are primitive in $A$. By definition of single and double axes, there exist five distinct  
3-transpositions of $M$ such that $x=a$, $y=b+c$, and $z=d+e$,
in particular $|bc|=|de|=2$. Denote $G=\langle a,b,c,d,e\rangle$ and note that $G$ belongs to a short list of 5-generated 3-transposition groups which were classified in~\cite{hs}. Note that 
3-transposition groups with small number of generators are also investigated in~\cite{sozutov}.
Note that recently, 3-transposition groups generated by at most six transpositions have been classified~\cite{hall22}, which opens the way to the classification of 3-generated subalgebras of type $E$ mentioned in Introduction.

The diagram on the set $Z=\{a,b,c,d,e\}$ represents a Coxeter group $\hat G$ given by the presentation encoded in the diagram. Clearly, $G$ is a factor group of $\hat G$ and, in 
many cases, $G=\hat G$. These are easy cases, and in the remaining 
cases we will have to find additional relations identifying $G$ as a 
factor group of the Coxeter group $\hat G$. These relations come from the 3-transposition property and the ones used in~\cite{hs}.

It follows form \cite{kms} that if the diagram on $Z$ is connected, then the radical of the Frobenius form of $A$ includes all of its ideals. This implies that if the radical is zero, then $A$ is simple. Therefore, for all algebras corresponding to connected diagrams, we find critical values for $\eta$ to determine when obtained algebras are simple.

First, we consider disconnected diagrams $D_1$, $D_2$, and $D_3$.
Then other cases are treated separately.
In what follows, we use double angular brackets $\langle\!\langle~
\rangle\!\rangle$ to indicate subalgebra generation, leaving single brackets for the linear span.

\subsection{Disconnected diagrams $D_1$, $D_2$, $D_3$, and $D_6$}

In this subsection, we deal with disconnected diagrams on $Z$. These are cases $D_1$, $D_2$, and $D_3$. Note that for these types,
the algebra $A$ is a direct sum of 1-generated and 2-generated algebras. Recall that 2-generated primitive algebras were classified in~\cite{gjmss,Joshi-phd}. Therefore, in all cases we obtain a sum of the 1-dimensional algebra $\F$ plus one of ``known'' algebras.

\bigskip\noindent
{\bf Diagram $\mathbf{D_1}$:} Clearly, $\hat G$ is an elementary 
abelian group of order $2^5$ and $G$ is either $\hat G$ or a factor group of $\hat G$ of order divisible by $2^3$, in which the images of the five generators of $\hat G$ remain 
distinct. Furthermore, we find that $xy=0$, $xz=0$, and $yz=0$. This implies that $A$ is 
isomorphic to $\F\oplus\F\oplus\F=\F^3$.

\bigskip\noindent
{\bf Diagram $\mathbf{D_2}$:} In this case, $\hat G=\la a,b,c\ra\times\la 
d\ra\times\la e\ra \cong S_4\times 2\times 2$. Since $Z(S_4)=1$, we infer that $G$ can only be the full $\hat G$. The algebra $A$ decomposes as $\dla x,y\dra\oplus\dla z\dra\cong 
Q_2(\eta)\oplus\F$, where $Q_2(\eta)$ is the 4-dimensional algebra generated by $x$ and $y$ and corresponding to the diagram $A_3$ in~\cite{gjmss}. 

\bigskip\noindent
{\bf Diagram $\mathbf{D_3}$:} In this case, $\hat G\cong\la a\ra\times\la b,e\ra\times 
\la c,d\ra\cong 2\times S_3\times S_3$. It is easy to see that $G=\hat G$, 
as it cannot be any of the proper factor groups. Since $xy=0$ and $xz=0$, the 
algebra $A=\dla x,y,z\dra$ decomposes as the direct sum $\dla x\dra\oplus 
\dla y, z\dra\cong\F\oplus3C(\eta)$, where $3C(\eta)$ is
the 3-dimensional algebra generated by $y$ and $z$ and corresponding to the diagram $B_4$ in~\cite{gjmss}. Note that 
$3C(\eta)$ is also isomorphic to the Matsuo algebra of $S_3$.

\bigskip\noindent
{\bf Diagram $\mathbf{D_6}$:} 
Clearly, $a$ is in the center of $\hat G$. Other elements form subdiagram  of type $B_6$ in terms of 2-generated subalgebras (see Subsection 5.4 in \cite{gjmss}). 
Therefore, $G=G(p) = \langle a \rangle \times \hat H / \langle (b^dc^e)^p=1 \rangle $, where $\hat{H}$ is the Coxeter group defined by the subdiagram with support set $\{b,c,d,e \}$ and $p \in \{1,2,3\}$.

Note that $x$ cannot lie in the subalgebra $\dla y,z\dra$, since $a$ and $b$ are not conjugated in $G$. 
Therefore, $A$ decomposes as $\dla x\dra\oplus\dla y,z\dra$. Using classification of algebras for type $B_6$, we find that $A=\mathbb{F}\oplus A_p$, where $p\in\{1,2,3\}$ and $A_1$ is the 3-dimensional Matsuo algebra $3C(2\eta)$,
$A_2$ is the 5-dimensional algebra \cite[Table 7]{gjmss},
$A_3$ is the 8-dimensional algebra \cite[Table 8]{gjmss}.

\subsection{Diagram $D_4$}
\bigskip\noindent
In this subsection, we assume that the diagram on $Z$ is $D_4$, that is $a$ does not commute with $b$, $c$, $d$, and $e$, while these four involutions are pairwise commuting.

Since $G$ is a 3-transposition group, we have $|a^{bc}a^{de}|\in\{1,2,3\}$.
We consider each of these possibilities.

{\bf Case~$1$.} Suppose that $a^{bc}a^{de}=1$ and hence $a^{bc}=a^{de}$. The involutions $d$, $a$, and, $e$ generate a subgroup isomorphic to $S_4$. We can identify them with transpositions $(1,2)$, $(2,3)$, and $(3,4)$, respectively. Then we see that $a^{de}=(1,4)=e^{ad}$. Therefore, we have $e=a^{bcda}\in H=\langle a,b,c,d\rangle$. Consider the diagram on $a$, $b$, $c$, $d$. We see that it  
coincides with the Coxeter diagram $D_4$ and hence $H$ is a homomorphic image of the Weyl group 
$\hat{H}=W(D_4)\cong W_2(\tilde{A}_3)\cong 2^3:S_4$ of order 192. It has been proven in detail in \cite{gjmss} that $H=\hat H$ or 
$\hat H/Z(\hat H)$, where this group has arisen for the diagram $C_4$. Thus, we may assume that $H=\hat H$. We use the description of the Fischer space of $H$ from Subsection~\ref{Wp(An)}.
Recall that this Fischer space consists of $2\cdot 
6=12$ points: $b_{i,j}=(i,j)$ and $c_{i,j}=(e_i+e_j)(i,j)$, for $1\leq i<j\leq 
4$; and $4\cdot 4=16$ lines $\{b_{i,j},b_{i,k},b_{j,k}\}$, 
$\{b_{i,j},c_{i,k},c_{j,k}\}$, $\{b_{i,k},c_{i,j},c_{j,k}\}$, 
and $\{b_{j,k},c_{i,j},c_{i,k}\}$, for $1\leq i<j<k\leq 4$.

We can identify $a$, $b$, $c$, and, $d$ with $b_{1,2}=(1,2)$, $b_{1,3}=(1,3)$, $b_{2,4}=(2,4)$, and $c_{1,3}=(e_1+e_3)(1,3)$, respectively.
Then $e=a^{bcda}=c_{2,4}=(e_2+e_4)(2,4)$ and
$a^{bc}=b_{3,4}=a^{de}$. Therefore, our identification extends to an isomorphism of the groups $G$ and $W(D_4)$.

Recall that $A=\dla x,y,z\dra$ is invariant under $\tau_x=\tau_a$, 
$\tau_y=\tau_b\tau_c$, and $\tau_z=\tau_d\tau_e$.
Now we see that $A=\dla x,y,z\dra$ contains the following elements:
\begin{center}
$\begin{aligned}
s_1:=&\,x=a=b_{1,2}, \quad s_2:=\,x^{\tau_y}=x^{\tau_z}=b_{3,4},\\
d_1:=&\,y=b_{1,3}+b_{2,4}, \quad d_2:=\,z=c_{1,3}+c_{2,4},\\
d_3:=&\,y^{\tau_x}=b_{2,3}+b_{1,4}, \quad d_4:=\,z^{\tau_x}=c_{2,3}+c_{1,4},\\
d_5:=&\,(d_2+d_3-\frac{1}{\eta}d_2d_3)=\\
    &\,d_2+d_3-\frac{1}{\eta}(\eta(d_2+d_3)-\eta(c_{1,2}+c_{3,4}))=c_{1,2}+c_{3,4}.
\end{aligned}$
\end{center}
Observe that $s_1$ and $s_2$ are single axes and 
$d_1,\ldots,d_5$ are double axes. We claim that 
these seven elements form a basis of $A$. The supports of the axes are disjoint and so they are linearly independent. It remains to verify that the subspace spanned by them is closed under the algebra product. We use the description of lines above and utilize the substantial symmetry we have. This is a sample calculation involving double axes:
\begin{multline*}
d_1d_3=(b_{1,3}+b_{2,4})(b_{2,3}+b_{1,4})=
\frac{\eta}{2}(b_{1,3}+b_{2,3}-b_{1,2})+\frac{\eta}{2}(b_{1,3}+b_{1,4}-b_{3,4})\\+
\frac{\eta}{2}(b_{2,4}+b_{2,3}-b_{3,4})+\frac{\eta}{2}(b_{2,4}+b_{1,4}-b_{1,2})=\eta(d_1+d_3-s_1-s_2).    
\end{multline*}

The complete table of products is presented in Table~\ref{t:algebra D4-1}. 

\begin{table}[ht]
\begin{center}
\begingroup
\setlength{\tabcolsep}{6pt}
\scalebox{.60}{
$\begin{tabu}[ht]{|c||c|c|c|c|c|c|c|}
\hline
 & s_1 & s_2 & d_1 & d_2 & d_3 & d_4 & d_5 \\ \hline
s_1 & s_1 & 0 & \frac{\eta}{2}(2s_1+d_1-d_3) & \frac{\eta}{2}(2s_1+d_2-d_4) & \frac{\eta}{2}(2s_1-d_1+d_3) & \frac{\eta}{2}(2s_1-d_2+d_4) & 0 \\ \hline
s_2 & 0 & s_2 & \frac{\eta}{2}(2s_2+d_1-d_3) & \frac{\eta}{2}(2s_2+d_2-d_4)  & \frac{\eta}{2}(2s_2-d_1+d_3) & \frac{\eta}{2}(2s_2-d_2+d_4)  & 0  \\ \hline
d_1 & \frac{\eta}{2}(2s_1+d_1-d_3)   & \frac{\eta}{2}(2s_2+d_1-d_3)  & d_1  & 0  & \eta(d_1+d_3-s_1-s_2) & \eta(d_1+d_4-d_5) & \eta(d_1-d_4+d_5) \\ \hline
d_2 & \frac{\eta}{2}(2s_1+d_2-d_4)  & \frac{\eta}{2}(2s_2+d_2-d_4) & 0   & d_2  & \eta(d_2+d_3-d_5)   & \eta(d_2+d_4-s_1-s_2)  & \eta(d_2-d_3+d_5)  \\ \hline
d_3 & \frac{\eta}{2}(2s_1-d_1+d_3)  & \frac{\eta}{2}(2s_2-d_1+d_3) & \eta(d_1+d_3-s_1-s_2)  & \eta(d_2+d_3-d_5)   & d_3  & 0 & \eta(d_3+d_5-d_2)  \\ \hline
d_4 & \frac{\eta}{2}(2s_1-d_2+d_4) & \frac{\eta}{2}(2s_2-d_2+d_4) & \eta(d_1+d_4-d_5)  & \eta(d_2+d_4-s_1-s_2) & 0 & d_4 & \eta(d_4+d_5-d_1)  \\ \hline
d_5  & 0   & 0   & \eta(d_1-d_4+d_5)   & \eta(d_2-d_3+d_5)   & \eta(d_3+d_5-d_2)   & \eta(d_4+d_5-d_1)   & d_5  \\ \hline
\end{tabu}$}
\caption{The new $7$-dimensional algebra}\label{t:algebra D4-1}
\endgroup
\end{center}
\end{table}

The Gram matrix for the Frobenius form with 
respect to the basis $$\{s_1,s_2,d_1,d_2,d_3,d_4,d_5\}$$ 
can be easily found and it is as follows: 
\[ \left(\begin{matrix}
1 & 0 & \eta & \eta & \eta & \eta & 0 \\ 
0 & 1 & \eta & \eta & \eta & \eta & 0 \\
\eta & \eta & 2 & 0 & 2\eta & 2\eta & 2\eta \\
\eta & \eta & 0 & 2 & 2\eta & 2\eta & 2\eta \\
\eta & \eta & 2\eta & 2\eta & 2 & 0 & 2\eta \\
\eta & \eta & 2\eta & 2\eta & 0 & 2 & 2\eta \\
0 & 0 & 2\eta & 2\eta & 2\eta & 2\eta & 2
\end{matrix} \right).
\]
The determinant of the Gram matrix is $512(\eta-\frac{1}{2})^2(\eta+\frac{1}{4})$. So $A$ is simple if and only if $\eta\neq-1/4$ in $\F$.
Finally, observe that this algebra is a subalgebra of the 8-dimensional algebra $2Q_2(\eta)$ from Proposition~\ref{a:2qketa}, whose basis includes $c_{1,3}$ and $c_{2,4}$ instead of $c_{1,3}+c_{2,4}$.

{\bf Case~$2$.} Suppose that $|a^{bc}a^{de}|=2$.
Since $a^{de}=a^{ed}=e^{ad}$, we have a relation 
$(a^{bc}e^{ad})^2=1$ which is equivalent to $(a^{bcda}e)^2=1$. By the presentation A13 from the Appendix of~\cite{hs}, this relation together with the relations of the diagram $D_4$
defines the finite group $F(5,24)\simeq W_2(\tilde{D}_4)$.
This group has the center of order 8, so $G$ isomorphic to
a factor group of $\hat{G}$ by a subgroup of $Z(\hat{G})$.
We mentioned in Section~\ref{sec:3} that $W_2(\Tilde{D}_4)$   has the same central type as $Wr(2^2,4)$. It is convenient to consider the Fischer space of this group for calculations.
We follow the notation of Proposition~\ref{p:W2(Dn)} for $n=4$.

Identify the generating elements of $G$ with the following elements of $Wr(2^2, S_4)$:
$a=b_{1,2}$, $b=b_{1,3}$, $c=b_{2,4}$, $d=c_{1,3}$, and $e=d_{2,4}$. Then $a^{bc}=b_{3,4}$ and $a^{de}=e_{3,4}$. 
It is easy to see that $a$, $b$, $c$, $d$, and $e$
satisfy all relations of this case. Note that these elements generate $Wr(2^2, S_4)$, so we can assume that $G=\langle a,b,c,d,e \rangle$.

The following elements belong to $A=\dla x,y,z\dra$:
\begin{center}
$\begin{aligned}
s_1:=&\,x=a=b_{1,2},\\
d_1:=&\,y=b+c=b_{1,3}+b_{2,4},\\
d_2:=&\,d_1^{\tau_x}=b_{1,3}^{b_{1,2}}+b_{2,4}^{b_{1,2}}=b_{2,3}+b_{1,4}=b_{1,4}+b_{2,3},\\ 
d_3:=&\,z=d+e=c_{1,3}+d_{2,4},\\
d_4:=&\,d_3^{\tau_x}=c_{1,3}^{b_{1,2}}+d_{2,4}^{b_{1,2}}=c_{2,3}+d_{1,4}=d_{1,4}+c_{2,3},\\ 
s_2:=&\,x^{\tau_y}=b_{1,2}^{b_{1,3}b_{2,4}}=b_{3,4},\\ 
s_3:=&\,x^{\tau_z}=b_{1,2}^{c_{1,3}d_{2,4}}=e_{3,4},\\ 
s_4:=&\,s_2^{\tau_z}=b_{3,4}^{c_{1,3}d_{2,4}}=e_{1,2},\\  
d_5:=&\,d_2^{\tau_z}=b_{2,3}^{c_{1,3}d_{2,4}}+b_{1,4}^{c_{1,3}d_{2,4}}=e_{1,4}+e_{2,3},\\ 
d_6:=&\,d_5^{\tau_x}=e_{1,4}^{b_{1,2}}+e_{2,3}^{b_{1,2}}=e_{2,4}+e_{1,3}=e_{1,3}+e_{2,4},\\ 
d_7:=&\,d_4^{\tau_y}=c_{2,3}^{b_{1,3}b_{2,4}}+d_{1,4}^{b_{1,3}b_{2,4}}=c_{1,4}+d_{2,3},\\ 
d_8:=&\,d_7^{\tau_x}=c_{1,4}^{b_{1,2}}+d_{2,3}^{b_{1,2}}=c_{2,4}+d_{1,3}=d_{1,3}+c_{2,4},\\ 
u:=&\,\frac{2}{\eta}(\eta(d_1+d_4)-d_1\cdot d_4)=c_{1,2}+c_{3,4}+d_{1,2}+d_{3,4}. 
\end{aligned}$
\end{center}

Most computations are straightforward, and we write the details only for $u$:
\begin{multline*}
d_1 \cdot d_4=(b_{1,3}+b_{2,4})\cdot(c_{2,3}+d_{1,4})=\frac{\eta}{2}(b_{1,3}+c_{2,3}-c_{1,2})+
\frac{\eta}{2}(b_{1,3}+d_{1,4}-d_{3,4})\\+\frac{\eta}{2}(b_{2,4}+c_{2,3}-c_{3,4})+
\frac{\eta}{2}(b_{2,4}+d_{1,4}-d_{1,2})=\frac{\eta}{2}(2(d_1+d_4)-c_{1,2}-d_{3,4}-c_{3,4}-d_{1,2}). 
\end{multline*}

These 13 elements include all 24 points of the Fischer space
and have disjoint supports, so they are linearly independent. 
The products are shown in Table~\ref{t:algebra D4-2}, in particular we obtain a basis of $A$. 
We provide below two examples of computations:
\begin{multline*}
d_5\cdot d_6=\frac{\eta}{2}\big((e_{1,3}+e_{1,4}-e_{1,3}^{e_{1,4}})+(e_{1,3}+e_{2,3}-e_{1,3}^{e_{2,3}})+(e_{2,4}+e_{1,4}-e_{2,4}^{e_{1,4}})\\+(e_{2,4}+e_{2,3}-e_{2,4}^{e_{2,3}})\big)=
\frac{\eta}{2}\big((e_{1,3}+e_{1,4}-b_{3,4})+(e_{1,3}+e_{2,3}-b_{1,2})\\+(e_{2,4}+e_{1,4}-b_{1,2})+(e_{2,4}+e_{2,3}-b_{3,4})\big)
=\eta(d_5+d_6-s_1-s_2);
\end{multline*}
\begin{multline*}
d_3\cdot d_7=\frac{\eta}{2}\big((c_{1,3}+c_{1,4}-c_{1,3}^{c_{1,4}})+(c_{1,3}+d_{2,3}-c_{1,3}^{d_{2,3}})+
(d_{2,4}+c_{1,4}-d_{2,4}^{c_{1,4}})\\+(d_{2,4}+d_{2,3}-d_{2,4}^{d_{2,3}})\big)=\eta(d_3+d_7-s_2-s_4).
\end{multline*}

As a result, we find that $A$ is a 13-dimensional algebra spanned by the above 13 elements. 
The double axes $y$ and $z$ are primitive in $A$ since it does not contain any of $b$, $c$, $d$, and $e$.

Now we give another proof that our elements form a basis without finding their pairwise products. Consider the involutive automorphism $\tau$ of $G$ as in Proposition~\ref{p:tau:W2(Dn)}.
By definition, we find that $a^\tau=a$, $b^\tau=c$, and $d^\tau=e$. Denote $H=\langle\tau\rangle$. Then 
$A$ is a subalgebra in the 14-dimensional fixed subalgebra $M_H$.
Note that our set of 13 elements comprises of 12 orbit vectors for $\tau$ and the sum of two orbit vectors $c_{1,2}+d_{3,4}$ and $c_{3,4}+d_{1,2}$. It is easy to see that the relations of the diagram on $Z$ are preserved under the permutation $\rho$ which fixes $a$, $d$, and $e$, and switches $b$ and $c$.
Moreover, $\rho$ preserves the relation $(a^{bc}a^{de})^2=1$, so $\rho$ is an automorphism of order two of the group $G$. Hence $\rho$ induces an automorphism of the Fischer space. Since $x^\rho=x$, $y^\rho=y$, and $z^\rho=z$, the algebra $A$ lies in the fixed subalgebra of $H_2=\langle\rho\rangle$. Since $c_{1,2}=c_{1,3}^{b_{2,3}}=d^{b_{2,3}}$ and $b_{2,3}=b_{1,2}^{b_{1,3}}=a^b$,
we find that $b_{2,3}^\rho=a^c=b_{1,4}$ and $c_{1,2}^\rho=d^{b_{1,4}}=c_{3,4}$.
Similarly, we see that $d_{1,2}=d_{2,4}^{b_{1,4}}=e^{b_{1,4}}$ and $b_{1,4}=b_{1,2}^{b_{2,4}}=a^c$, so $d_{1,2}^\rho=d_{2,4}^{b_{2,3}}=d_{3,4}$.
Now $(c_{1,2}+d_{3,4})^\rho=c_{3,4}+d_{1,2}$ and hence $c_{1,2}+d_{3,4}\in M_H\setminus A$.
Therefore, $A$ has dimension less than 14, so it is 13-dimensional. 
Denote $v=c_{1,2}+d_{3,4}-c_{3,4}-d_{1,2}$. Then $v\in M_H$ and $v^\rho=-v$.
If $u\in A$, then $(u,v)=(u^\rho, v^\rho)=(u,-v)$. Since $(v,v)=4\neq 0$, we infer that
$A$ coincides with the orthogonal complement to $v$ in $M_H$. Finally, we note that $A$
is the 1-eigenspace of the action of $\rho$ on $M_H$ and $\dla v\dra$ is the (-1)-eigenspace.

Using GAP, it is easy to find the determinant of the Gram matrix for $A$ with respect to the basis $s_1,\ldots,s_4,d_1,\ldots,d_8$, and $u$. 
According to calculations, the determinant is
$131072\eta^3-49152\eta^2+1024=131072(\eta-\frac{1}{4})^2(\eta+\frac{1}{8})$. Hence $A$ is simple unless $\eta=\frac{1}{4}$ or $-\frac{1}{8}$ in~$\mathbb{F}$.


{\bf Case~$3$.} In this case $|a^{bc}a^{de}|=3$.
Since $a^{de}=a^{ed}=e^{ad}$, we have a relation 
$(a^{bc}e^{ad})^3=1$ which is equivalent to $(a^{bcda}e)^3=1$. By the presentation A15a from the Appendix of~\cite{hs}, this relation together with the relations of the diagram $D_4$
defines the finite group $F(5,36)\simeq W_3(\tilde{D}_4)$.
We use that $W_3(\tilde{D}_4)\simeq Wr(S_3,4)$ and Proposition~\ref{p:tau:W3(Dn)} for a description of the Fischer space of $Wr(S_3,4)$. Since there are no other factors of 
$Wr(S_3,4)$ satisfying the required equalities for $a,b,c,d,$ and $e$, we infer that $G=\hat{G}\simeq Wr(S_3,4)$.

Fix the following elements: $a=b_{1,2}$, $b=b_{1,3}$, $c=b_{2,4}$, $d=c_{1,3}$, and $e=e_{2,4}$. 
Then $a^{bc}=b_{1,2}^{b_{1,3}b_{2,4}}=b_{3,4}$
and $a^{de}=b_{1,2}^{c_{1,3}e_{2,4}}=c_{2,3}^{e_{2,4}}=g_{3,4}$. 
By Proposition~\ref{p:tau:W3(Dn)}, we see that $a^{bc}$ and $a^{de}$ lie on a line $\{b_{3,4}, d_{3,4}, g_{3,4}\}$ and hence $|a^{bc}a^{de}|=3$. Therefore, elements $a$, $b$, $c$, $d$, and $e$ satisfy the relations of this case. It is easy to see that they generate $Wr(S_3,4)$.
Now use them to find a basis and the dimension of $A$. The following elements belong to $A$:
\begin{center}
$\begin{aligned}
s_1:=&\,a = b_{1,2}, \quad s_4:=a^{\tau_y}=b_{1,2}^{b_{1,3}b_{2,4}}=b_{3,4},\\ 
s_2:=&\,s_4^{\tau_z}=b_{3,4}^{c_{1,3}e_{2,4}}=g_{1,2}, \quad s_3:=\,s_2^{\tau_x}=g_{1,2}^{b_{1,2}}=d_{1,2},\\ 
s_5:=&\,a^{\tau_z}=b_{1,2}^{c_{1,3}e_{2,4}}=g_{3,4},\quad 
s_6:=s_3^{\tau_y}=d_{1,2}^{b_{1,3}b_{2,4}}=d_{3,4},\\ 
d_1:=&\,y=b+c=b_{1,3}+b_{2,4}, \quad d_2:=y^{\tau_x}=b_{1,3}^{b_{1,2}}+b_{2,4}^{b_{1,2}}=b_{2,3}+b_{1,4},\\ 
d_3:=&\,z=d+e=c_{1,3}+e_{2,4}, \quad
d_4:=z^{\tau_x}=c_{1,3}^{b_{1,2}}+e_{2,4}^{b_{1,2}}=c_{2,3}+e_{1,4},\\ 
d_5:=&\,y^{\tau_{s_5}}=b_{1,3}^{g_{3,4}}+b_{2,4}^{g_{3,4}}=g_{1,4}+d_{2,3}, \quad d_6:=d_2^{\tau_{s_5}}=b_{2,3}^{g_{3,4}}+b_{1,4}^{g_{3,4}}=g_{2,4}+d_{1,3},\\
d_7:=&\,z^{\tau_{s_4}}=c_{1,3}^{b_{3,4}}+e_{2,4}^{b_{3,4}}=c_{1,4}+e_{2,3}, \quad d_8:=d_4^{\tau_{s_4}}=c_{2,3}^{b_{3,4}}+e_{1,4}^{b_{3,4}}=c_{2,4}+e_{1,3},\\
d_9:=&\,d_2^{\tau_{s_2}}=b_{2,3}^{g_{1,2}}+b_{1,4}^{g_{1,2}}=g_{1,3}+d_{2,4},\quad d_{10}:=y^{\tau_{s_6}}=b_{1,3}^{d_{3,4}}+b_{2,4}^{d_{3,4}}=d_{1,4}+g_{2,3},\\
d_{11}:=&\,d_4^{\tau_{s_2}}=c_{2,3}^{g_{1,2}}+e_{1,4}^{g_{1,2}}=f_{1,3}+f_{2,4}, \quad d_{12}:=d_{3}^{\tau_{s_6}}=c_{1,3}^{d_{3,4}}+e_{2,4}^{d_{3,4}}=f_{1,4}+f_{2,3},\\
d_{13}:=&\, \frac{1}{\eta}(\eta(d_1+d_{12})-d_1\cdot d_{12}) = f_{1,2}+f_{3,4} \\ 
u:=&\,\frac{2}{\eta}(\eta(d_1+d_4)-d_1\cdot d_4)=c_{1,2}+c_{3,4}+e_{1,2}+e_{3,4}. 
\end{aligned}$
\end{center}

The corresponding calculations for $d_{13}$:
\begin{multline*}
d_1 \cdot d_{12}=(b_{1,3}+b_{2,4}) \cdot (f_{1,4}+f_{2,3})=
\frac{\eta}{2}\big((b_{1,3}+f_{1,4}-b_{1,3}^{f_{1,4}})+(b_{1,3}+f_{2,3}-b_{1,3}^{f_{2,3}})\\+
(b_{2,4}+f_{1,4}-b_{2,4}^{f_{1,4}})+(b_{2,4}+f_{2,3}-b_{2,4}^{f_{2,3}})\big)=
\frac{\eta}{2}\big((b_{1,3}+f_{1,4}-f_{3,4})+(b_{1,3}+f_{2,3}-f_{1,2})\\+
(b_{2,4}+f_{1,4}-f_{1,2})+(b_{2,4}+f_{2,3}-f_{3,4})\big)=
\eta(d_1+d_{12}-(f_{1,2}+f_{3,4})).
\end{multline*}

These 20 elements $s_1,\ldots,s_6,d_1,\ldots d_{13},$ and $u$ include all 36 points of the Fischer space
and have disjoint supports, so they are linearly independent. 
The products are shown in Table~\ref{t:algebra D4-3}. 
Now we prove that these 20 elements form a basis of $A$ without appealing to the table.
Denote $\tau=f_{1,2}f_{3,4}$ which is, clearly, an involution in $G$.
Then $a^\tau=b_{1,2}^{f_{1,2}f_{3,4}}=a$, $b^\tau=b_{1,3}^{f_{1,2}f_{3,4}}=f_{2,3}^{f_{3,4}}=b_{2,4}=c$, and
$d^\tau=c_{1,3}^{f_{1,2}f_{3,4}}=g_{2,3}^{f_{3,4}}=e_{2,4}=e$.
Therefore, $A$ is a subalgebra in the fixed subalgebra of $H=\langle\tau\rangle$. 
Note that $f_{1,2}^\tau=f_{1,2}$, $f_{3,4}^\tau=f_{3,4}$, 
$c_{1,2}^\tau=e_{1,2}^{f_{3,4}}=e_{1,2}$, and $c_{3,4}^\tau=c_{3,4}^{f_{3,4}}=e_{3,4}$.
Inspecting the 20 written above elements, we find that $s_1,\ldots, s_6$, $d_1,\ldots, d_{12}$ are $\tau$-orbit vectors. Therefore, the algebra $M_H$ is 22-dimensional with a basis $s_1,\ldots, s_6$, $d_1,\ldots, d_{12}$, $f_{1,2}$, $f_{3,4}$, $c_{1,2}+e_{1,2}$, 
$c_{3,4}+e_{3,4}$. 

Similarly to Case~2, consider involutive automorphism $\rho$ of the Fischer space such that
$a^\rho=a$, $b^\rho=c$, $d^\rho=d$, and $e^\rho=e$. Denote $H_2=\langle\rho\rangle$.
Then $A$ is a subalgebra in the fixed subalgebra $M_{H_2}$. Using the definition of $\rho$,
we see that
$$e_{1,2}^\rho=(b_{1,4}e_{2,4}b_{1,4})^\rho=(a^c)^\rho e^\rho(a^c)^\rho=a^b e a^b=b_{2,3}e_{2,4}b_{2,3}=e_{3,4}, $$
$$c_{1,2}^\rho=(c_{1,3}^{b_{2,3}})^\rho=(a^b)^\rho d^\rho(a^b)^\rho=a^cda^c=b_{1,4}c_{1,3}b_{1,4}=c_{3,4},$$
$$f_{1,2}=(c_{1,2}^{e_{1,2}})^\rho=c_{3,4}^{e_{3,4}}=f_{3,4}.$$

Therefore, the subspace $M_H$ is invariant under the action of $\rho$, 
$A$ is a subspace of the 1-eigenspace of $\rho$, and $U=\dla f_{1,2} - f_{3,4}, c_{1,2}+e_{1,2}-c_{3,4}-e_{3,4}\dra$ is a subspace of the (-1)-eigenspace of $\rho$.
Since $M_H$ is 22-dimensional, we infer that $A$ is 20-dimensional and $M_H$ is the orthogonal direct sum of $A$ and~$U$.

Using GAP, we find the determinant of the Gram matrix with respect to the chosen basis equals $-1300(\eta-2)^{13}(\eta-\frac{1}{5})^2(\eta+\frac{1}{13})(\eta+1)^4$.  Hence the algebra is simple unless $\eta\in\{-1,-\frac{1}{13},2,\frac{1}{5}\}$.

\begin{sidewaystable}  
\begin{center}
\begingroup
\setlength{\tabcolsep}{10pt} %
\renewcommand{\arraystretch}{4}
\scalebox{.40}{
$\begin{tabu}[h!]{|c||c|c|c|c|c|c|c|c|c|c|c|c|c|}
\hline
& s_1 & s_2 & s_3 & s_4 & d_1 & d_2 & d_3 & d_4 & d_5 & d_6 & d_7 & d_8 & u \\ \hline 
s_1 & s_1 & 0 & 0 & 0 & \frac{\eta}{2}(2s_1+d_1-d_2) & \frac{\eta}{2}(2s_1-d_1+d_2) & \frac{\eta}{2}(2s_1+d_3-d_4) & \frac{\eta}{2}(2s_1-d_3+d_4) & \frac{\eta}{2}(2s_1+d_5-d_6) & \frac{\eta}{2}(2s_1-d_5+d_6) & \frac{\eta}{2}(2s_1+d_7-d_8) & \frac{\eta}{2}(2s_1-d_7+d_8) & 0 \\ \hline
s_2 & 0 & s_2 & 0 & 0 & \frac{\eta}{2}(2s_2+d_1-d_2) & \frac{\eta}{2}(2s_2-d_1+d_2) & \frac{\eta}{2}(2s_2+d_3-d_7) & \frac{\eta}{2}(2s_2+d_4-d_8) & \frac{\eta}{2}(2s_2+d_5-d_6) & \frac{\eta}{2}(2s_2-d_5+d_6) & \frac{\eta}{2}(2s_2-d_3+d_7) & \frac{\eta}{2}(2s_2-d_4+d_8) & 0 \\ \hline
s_3 & 0 & 0 & s_3 & 0 & \frac{\eta}{2}(2s_3+d_1-d_5) & \frac{\eta}{2}(2s_3+d_2-d_6) & \frac{\eta}{2}(2s_3+d_3-d_4) & \frac{\eta}{2}(2s_3-d_3+d_4) & \frac{\eta}{2}(2s_3-d_1+d_5) & \frac{\eta}{2}(2s_3-d_2+d_6) & \frac{\eta}{2}(2s_3+d_7-d_8) & \frac{\eta}{2}(2s_3-d_7+d_8) & 0 \\ \hline
s_4 & 0 & 0 & 0 & s_4 & \frac{\eta}{2}(2s_4+d_1-d_5) & \frac{\eta}{2}(2s_4+d_2-d_6) & \frac{\eta}{2}(2s_4+d_3-d_7) & \frac{\eta}{2}(2s_4+d_4-d_8) & \frac{\eta}{2}(2s_4-d_1+d_5) & \frac{\eta}{2}(2s_4-d_2+d_6) & \frac{\eta}{2}(2s_4-d_3+d_7) & \frac{\eta}{2}(2s_4-d_4+d_8) & 0 \\ \hline
d_1 & \frac{\eta}{2}(2s_1+d_1-d_2) & \frac{\eta}{2}(2s_2+d_1-d_2) & \frac{\eta}{2}(2s_3+d_1-d_5) & \frac{\eta}{2}(2s_4+d_1-d_5) & d_1 & \eta(d_1+d_2-s_1-s_2) & 0 & \frac{\eta}{2}(2d_1+2d_4-u) & \eta(d_1+d_5-s_3-s_4) & 0 & \frac{\eta}{2}(2d_1+2d_7-u) & 0 & \eta(2d_1+u-d_4-d_7) \\ \hline
d_2 & \frac{\eta}{2}(2s_1-d_1+d_2) & \frac{\eta}{2}(2s_2-d_1+d_2) & \frac{\eta}{2}(2s_3+d_2-d_6) & \frac{\eta}{2}(2s_4+d_2-d_6) & \eta(d_1+d_2-s_1-s_2) & d_2 & \frac{\eta}{2}(2d_2+2d_3-u) & 0 & 0 & \eta(d_2+d_6-s_3-s_4) & 0 & \frac{\eta}{2}(2d_2+2d_8-u) & \eta(2d_2+u-d_3-d_8) \\ \hline
d_3 & \frac{\eta}{2}(2s_1+d_3-d_4) & \frac{\eta}{2}(2s_2+d_3-d_7) & \frac{\eta}{2}(2s_3+d_3-d_4) & \frac{\eta}{2}(2s_4+d_3-d_7) & 0 & \frac{\eta}{2}(2d_2+2d_3-u) & d_3 & \eta(d_3+d_4-s_1-s_3) & \frac{\eta}{2}(2d_3+2d_5-u) & 0 & \eta(d_3+d_7-s_2-s_4) & 0 & \eta(2d_3+u-d_2-d_5) \\ \hline
d_4 & \frac{\eta}{2}(2s_1-d_3+d_4) & \frac{\eta}{2}(2s_2+d_4-d_8) & \frac{\eta}{2}(2s_3-d_3+d_4) & \frac{\eta}{2}(2s_4+d_4-d_8) & \frac{\eta}{2}(2d_1+2d_4-u) & 0 & \eta(d_3+d_4-s_1-s_3) & d_4 & 0 & \frac{\eta}{2}(2d_4+2d_6-u) & 0 & \eta(d_4+d_8-s_2-s_4) & \eta(2d_4+u-d_1-d_6) \\ \hline
d_5 & \frac{\eta}{2}(2s_1+d_5-d_6) & \frac{\eta}{2}(2s_2+d_5-d_6) & \frac{\eta}{2}(2s_3-d_1+d_5) & \frac{\eta}{2}(2s_4-d_1+d_5) & \eta(d_1+d_5-s_3-s_4) & 0 & \frac{\eta}{2}(2d_3+2d_5-u) & 0 & d_5 & \eta(d_5+d_6-s_1-s_2) & 0 & \frac{\eta}{2}(2d_5+2d_8-u) & \eta(2d_5+u-d_3-d_8) \\ \hline
d_6 & \frac{\eta}{2}(2s_1-d_5+d_6) & \frac{\eta}{2}(2s_2-d_5+d_6) & \frac{\eta}{2}(2s_3-d_2+d_6) & \frac{\eta}{2}(2s_4-d_2+d_6) & 0 & \eta(d_2+d_6-s_3-s_4) & 0 & \frac{\eta}{2}(2d_4+2d_6-u) & \eta(d_5+d_6-s_1-s_2) & d_6 & \frac{\eta}{2}(2d_6+2d_7-u) & 0 & \eta(2d_6+u-d_4-d_7) \\ \hline
d_7 & \frac{\eta}{2}(2s_1+d_7-d_8) & \frac{\eta}{2}(2s_2-d_3+d_7) & \frac{\eta}{2}(2s_3+d_7-d_8) & \frac{\eta}{2}(2s_4-d_3+d_7) & \frac{\eta}{2}(2d_1+2d_7-u) & 0 & \eta(d_3+d_7-s_2-s_4) & 0 & 0 & \frac{\eta}{2}(2d_6+2d_7-u) & d_7 & \eta(d_7+d_8-s_1-s_3) & \eta(2d_7+u-d_1-d_6) \\ \hline
d_8 & \frac{\eta}{2}(2s_1-d_7+d_8) & \frac{\eta}{2}(2s_2-d_4+d_8) & \frac{\eta}{2}(2s_3-d_7+d_8) & \frac{\eta}{2}(2s_4-d_4+d_8) & 0 & \frac{\eta}{2}(2d_2+2d_8-u) & 0 & \eta(d_4+d_8-s_2-s_4) & \frac{\eta}{2}(2d_5+2d_8-u) & 0 & \eta(d_7+d_8-s_1-s_3) & d_8 & \eta(2d_8+u-d_2-d_5) \\ \hline
u & 0 & 0 & 0 & 0 & \eta(2d_1+u-d_4-d_7) & \eta(2d_2+u-d_3-d_8) & \eta(2d_3+u-d_2-d_5) & \eta(2d_4+u-d_1-d_6) & \eta(2d_5+u-d_3-d_8) & \eta(2d_6+u-d_4-d_7) & \eta(2d_7+u-d_1-d_6) & \eta(2d_8+u-d_2-d_5) & u \\ \hline
\end{tabu}$}
\caption{The new 13-dimensional algebra}\label{t:algebra D4-2}
\endgroup
\end{center}
\end{sidewaystable}

\begin{sidewaystable}  
\begin{center}
\begingroup
\setlength{\tabcolsep}{10pt} %
\renewcommand{\arraystretch}{4}
\scalebox{.25}{
$\begin{tabu}[h!]{|c||c|c|c|c|c|c|c|c|c|c|c|c|c|c|c|c|c|c|c|c|}
\hline
& s_1 & s_2 & s_3 & s_4 & s_5 & s_6 & d_1 & d_2 & d_3 & d_4 & d_5 & d_6 & d_7 & d_8 & d_9 & d_{10} & d_{11} & d_{12} & d_{13} & u \\ \hline 
s_1 & s_1 & \frac{\eta}{2}(s_1+s_2-s_3) & \frac{\eta}{2}(s_1-s_2+s_3) & 0 & 0 & 0 & \frac{\eta}{2}(2s_1+d_1-d_2) & \frac{\eta}{2}(2s_1-d_1+d_2) & \frac{\eta}{2}(2s_1+d_3-d_4) & \frac{\eta}{2}(2s_1-d_3+d_4) & \frac{\eta}{2}(2s_1+d_5-d_6) & \frac{\eta}{2}(2s_1-d_5+d_6) & \frac{\eta}{2}(2s_1+d_7-d_8) & \frac{\eta}{2}(2s_1-d_7+d_8) & \frac{\eta}{2}(2s_1+d_9-d_{10}) & \frac{\eta}{2}(2s_1-d_9+d_{10}) & \frac{\eta}{2}(2s_1+d_{11}-d_{12}) & \frac{\eta}{2}(2s_1-d_{11}+d_{12}) & 0 & 0 \\ \hline
s_2 & \frac{\eta}{2}(s_1+s_2-s_3) & s_2 & \frac{\eta}{2}(s_2-s_1+s_3) & 0 & 0 & 0 & \frac{\eta}{2}(2s_2+d_1-d_5) & \frac{\eta}{2}(2s_2+d_2-d_9) & \frac{\eta}{2}(2s_2+d_3-d_7) & \frac{\eta}{2}(2s_2+d_4-d_{11}) & \frac{\eta}{2}(2s_2-d_1+d_5) & \frac{\eta}{2}(2s_2+d_6-d_{10}) & \frac{\eta}{2}(2s_2-d_3+d_7) & \frac{\eta}{2}(2s_2+d_8-d_{12}) & \frac{\eta}{2}(2s_2-d_2+d_9) & \frac{\eta}{2}(2s_2-d_6+d_{10}) & \frac{\eta}{2}(2s_2-d_4+d_{11}) & \frac{\eta}{2}(2s_2-d_8+d_{12}) & 0 & 0 \\ \hline
s_3 & \frac{\eta}{2}(s_1-s_2+s_3) & \frac{\eta}{2}(s_2-s_1+s_3) & s_3 & 0 & 0 & 0 & \frac{\eta}{2}(2s_3+d_1-d_{10}) & \frac{\eta}{2}(2s_3+d_2-d_6) & \frac{\eta}{2}(2s_3+d_3-d_{12}) & \frac{\eta}{2}(2s_3+d_4-d_8) & \frac{\eta}{2}(2s_3+d_5-d_9) & \frac{\eta}{2}(2s_3-d_2+d_6) & \frac{\eta}{2}(2s_3+d_7-d_{11}) & \frac{\eta}{2}(2s_3-d_4+d_8) & \frac{\eta}{2}(2s_3-d_5+d_9) & \frac{\eta}{2}(2s_3-d_1+d_{10}) & \frac{\eta}{2}(2s_3-d_7+d_{11}) & \frac{\eta}{2}(2s_3-d_3+d_{12}) & 0 & 0 \\ \hline
s_4 & 0 & 0 & 0 & s_4 & \frac{\eta}{2}(s_4+s_5-s_6) & \frac{\eta}{2}(s_4-s_5+s_6) & \frac{\eta}{2}(2s_4+d_1-d_2) & \frac{\eta}{2}(2s_4-d_1+d_2) & \frac{\eta}{2}(2s_4+d_3-d_7) & \frac{\eta}{2}(2s_4+d_4-d_8) & \frac{\eta}{2}(2s_4+d_5-d_9) & \frac{\eta}{2}(2s_4+d_6-d_{10}) & \frac{\eta}{2}(2s_4-d_3+d_7) & \frac{\eta}{2}(2s_4-d_4+d_8) & \frac{\eta}{2}(2s_4-d_5+d_9) & \frac{\eta}{2}(2s_4-d_6+d_{10}) & \frac{\eta}{2}(2s_4+d_{11}-d_{12}) & \frac{\eta}{2}(2s_4-d_{11}+d_{12}) & 0 & 0 \\ \hline
s_5 & 0 & 0 & 0 & \frac{\eta}{2}(s_4+s_5-s_6) & s_5 & \frac{\eta}{2}(s_5-s_4+s_6) & \frac{\eta}{2}(2s_5+d_1-d_5) & \frac{\eta}{2}(2s_5+d_2-d_6) & \frac{\eta}{2}(2s_5+d_3-d_4) & \frac{\eta}{2}(2s_5-d_3+d_4) & \frac{\eta}{2}(2s_5-d_1+d_5 & \frac{\eta}{2}(2s_5-d_2+d_6) & \frac{\eta}{2}(2s_5+d_7-d_{11}) & \frac{\eta}{2}(2s_5+d_8-d_{12}) & \frac{\eta}{2}(2s_5+d_9-d_{10}) &  \frac{\eta}{2}(2s_5-d_9+d_{10}) & \frac{\eta}{2}(2s_5-d_7+d_{11}) & \frac{\eta}{2}(2s_5-d_8+d_{12}) & 0 & 0 \\ \hline
s_6 & 0 & 0 & 0 & \frac{\eta}{2}(s_4-s_5+s_6) & \frac{\eta}{2}(s_5-s_4+s_6) & s_6 & \frac{\eta}{2}(2s_6+d_1-d_{10}) & \frac{\eta}{2}(2s_6+d_2-d_9) & \frac{\eta}{2}(2s_6+d_3-d_{12}) & \frac{\eta}{2}(2s_6+d_4-d_{11}) & \frac{\eta}{2}(2s_6+d_5-d_6) & \frac{\eta}{2}(2s_6-d_5+d_6) & \frac{\eta}{2}(2s_6+d_7-d_8) & \frac{\eta}{2}(2s_6-d_7+d_8) & \frac{\eta}{2}(2s_6-d_2+d_9) & \frac{\eta}{2}(2s_6-d_1+d_{10}) & \frac{\eta}{2}(2s_6-d_4+d_{11}) & \frac{\eta}{2}(2s_6-d_3+d_{12}) & 0 & 0 \\ \hline
d_1 & \frac{\eta}{2}(2s_1+d_1-d_2) & \frac{\eta}{2}(2s_2+d_1-d_5) & \frac{\eta}{2}(2s_3+d_1-d_{10}) & \frac{\eta}{2}(2s_4+d_1-d_2) & \frac{\eta}{2}(2s_5+d_1-d_5) & \frac{\eta}{2}(2s_6+d_1-d_{10}) & d_1 & \eta(d_1+d_2-s_1-s_4) & 0 & \frac{\eta}{2}(2d_1+2d_4-u) & \eta(d_1+d_5-s_2-s_5) & \frac{\eta}{2}(d_1+d_6-d_9) &  \frac{\eta}{2}(2d_1+2d_7-u) & 0 & \frac{\eta}{2}(d_1-d_6+d_9) & \eta(d_1+d_{10}-s_3-s_6) & 0 & \eta(d_1+d_{12}-d_{13}) & \eta(d_1-d_{12}+d_{13}) & \eta(2d_1-d_4-d_7+u) \\ \hline
d_2 & \frac{\eta}{2}(2s_1-d_1+d_2) & \frac{\eta}{2}(2s_2+d_2-d_9) & \frac{\eta}{2}(2s_3+d_2-d_6) & \frac{\eta}{2}(2s_4-d_1+d_2) & \frac{\eta}{2}(2s_5+d_2-d_6) & \frac{\eta}{2}(2s_6+d_2-d_9) & \eta(d_1+d_2-s_1-s_4) & d_2 & \frac{\eta}{2}(2d_2+2d_3-u) & 0 & \frac{\eta}{2}(d_2+d_5-d_{10}) & \eta(d_2+d_6-s_3-s_5) & 0 & \frac{\eta}{2}(2d_2+2d_8-u) & \eta(d_2+d_9-s_2-s_6) & \frac{\eta}{2}(d_2-d_5+d_{10}) & \eta(d_2+d_{11}-d_{13}) & 0  & \eta(d_2-d_{11}+d_{13}) & \eta(2d_2-d_3-d_8+u) \\ \hline
d_3 & \frac{\eta}{2}(2s_1+d_3-d_4) & \frac{\eta}{2}(2s_2+d_3-d_7) & \frac{\eta}{2}(2s_3+d_3-d_{12}) & \frac{\eta}{2}(2s_4+d_3-d_7) & \frac{\eta}{2}(2s_5+d_3-d_4) & \frac{\eta}{2}(2s_6+d_3-d_{12}) & 0 & \frac{\eta}{2}(2d_2+2d_3-u) & d_3 & \eta(d_3+d_4-s_1-s_5) & \frac{\eta}{2}(2d_3+2d_5-u) & 0 & \eta(d_3+d_7-s_2-s_4) & \frac{\eta}{2}(d_3+d_8-d_{11}) & 0 & \eta(d_3+d_{10}-d_{13}) & \frac{\eta}{2}(d_3-d_8+d_{11}) & \eta(d_3+d_{12}-s_3-s_6) & \eta(d_3-d_{10}+d_{13}) & \eta(2d_3-d_2-d_5+u) \\ \hline
d_4 & \frac{\eta}{2}(2s_1-d_3+d_4) & \frac{\eta}{2}(2s_2+d_4-d_{11}) & \frac{\eta}{2}(2s_3+d_4-d_8) & \frac{\eta}{2}(2s_4+d_4-d_8) & \frac{\eta}{2}(2s_5-d_3+d_4) & \frac{\eta}{2}(2s_6+d_4-d_{11}) & \frac{\eta}{2}(2d_1+2d_4-u) & 0 & \eta(d_3+d_4-s_1-s_5) & d_4 & 0 & \frac{\eta}{2}(2d_4+2d_6-u) & \frac{\eta}{2}(d_4+d_7-d_{12}) & \eta(d_4+d_8-s_3-s_4) & \eta(d_4+d_9-d_{13}) & 0 & \eta(d_4+d_{11}-s_2-s_6) & \frac{\eta}{2}(d_4-d_7+d_{12}) & \eta(d_4-d_9+d_{13}) & \eta(2d_4-d_1-d_6+u) \\ \hline
d_5 & \frac{\eta}{2}(2s_1+d_5-d_6) & \frac{\eta}{2}(2s_2-d_1+d_5) & \frac{\eta}{2}(2s_3+d_5-d_9) & \frac{\eta}{2}(2s_4+d_5-d_9) & \frac{\eta}{2}(2s_5-d_1+d_5) & \frac{\eta}{2}(2s_6+d_5-d_6) & \eta(d_1+d_5-s_2-s_5) & \frac{\eta}{2}(d_2+d_5-d_{10}) & \frac{\eta}{2}(2d_3+2d_5-u) & 0 & d_5 & \eta(d_5+d_6-s_1-s_6) & 0 & \eta(d_5+d_8-d_{13}) & \eta(d_5+d_9-s_3-s_4) & \frac{\eta}{2}(d_5-d_2+d_{10}) & \frac{\eta}{2}(2d_5+2d_{11}-u) & 0 & \eta(d_5-d_8+d_{13}) & \eta(2d_5-d_3-d_{11}+u) \\ \hline
d_6 & \frac{\eta}{2}(2s_1-d_5+d_6) & \frac{\eta}{2}(2s_2+d_6-d_{10}) & \frac{\eta}{2}(2s_3-d_2+d_6) & \frac{\eta}{2}(2s_4+d_6-d_{10}) & \frac{\eta}{2}(2s_5-d_2+d_6) & \frac{\eta}{2}(2s_6-d_5+d_6) & \frac{\eta}{2}(d_1+d_6-d_9) & \eta(d_2+d_6-s_3-s_5) & 0 & \frac{\eta}{2}(2d_4+2d_6-u) & \eta(d_5+d_6-s_1-s_6) & d_6 & \eta(d_6+d_7-d_{13}) & 0 & \frac{\eta}{2}(d_6+d_9-d_1) & \eta(d_6+d_{10}-s_2-s_4) & 0 & \frac{\eta}{2}(2d_6+2d_{12}-u) & \eta(d_6-d_7+d_{13}) & \eta(2d_6-d_4-d_{12}+u) \\ \hline
d_7 & \frac{\eta}{2}(2s_1+d_7-d_8) & \frac{\eta}{2}(2s_2-d_3+d_7) & \frac{\eta}{2}(2s_3+d_7-d_{11}) & \frac{\eta}{2}(2s_4-d_3+d_7) & \frac{\eta}{2}(2s_5+d_7-d_{11}) & \frac{\eta}{2}(2s_6+d_7-d_8) & \frac{\eta}{2}(2d_1+2d_7-u) & 0 & \eta(d_3+d_7-s_2-s_4) & \frac{\eta}{2}(d_4+d_7-d_{12}) & 0 & \eta(d_6+d_7-d_{13}) & d_7 & \eta(d_7+d_8-s_1-s_6) & \frac{\eta}{2}(2d_7+2d_9-u) & 0 & \eta(d_7-s_3-s_5+d_{11}) & \frac{\eta}{2}(d_7-d_4+d_{12}) & \eta(d_7-d_6+d_{13}) & \eta(2d_7-d_1-d_9+u) \\ \hline
d_8 & \frac{\eta}{2}(2s_1-d_7+d_8) & \frac{\eta}{2}(2s_2+d_8-d_{12}) & \frac{\eta}{2}(2s_3-d_4+d_8) & \frac{\eta}{2}(2s_4-d_4+d_8) & \frac{\eta}{2}(2s_5+d_8-d_{12}) & \frac{\eta}{2}(2s_6-d_7+d_8) & 0 & \frac{\eta}{2}(2d_2+2d_8-u) & \frac{\eta}{2}(d_3+d_8-d_{11}) & \eta(d_4+d_8-s_3-s_4) & \eta(d_5+d_8-d_{13}) & 0 & \eta(d_7+d_8-s_1-s_6) & d_8 & 0 & \frac{\eta}{2}(2d_8+2d_{10}-u) & \frac{\eta}{2}(d_8-d_3+d_{11}) & \eta(d_8+d_{12}-s_2-s_5) & \eta(d_8+d_{13}-d_5) & \eta(2d_8-d_2-d_{10}+u) \\ \hline
d_9 & \frac{\eta}{2}(2s_1+d_9-d_{10}) & \frac{\eta}{2}(2s_2-d_2+d_9) & \frac{\eta}{2}(2s_3-d_5+d_9) & \frac{\eta}{2}(2s_4-d_5+d_9) & \frac{\eta}{2}(2s_5+d_9-d_{10}) & \frac{\eta}{2}(2s_6-d_2+d_9) & \frac{\eta}{2}(d_1-d_6+d_9) & \eta(d_2+d_9-s_2-s_6) & 0 & \eta(d_4+d_9-d_{13}) & \eta(d_5+d_9-s_3-s_4) & \frac{\eta}{2}(d_6+d_9-d_1) & \frac{\eta}{2}(2d_7+2d_9-u) & 0 & d_9 & \eta(d_9+d_{10}-s_1-s_5) & 0 & \frac{\eta}{2}(2d_9+2d_{12}-u) & \eta(d_9-d_4+d_{13}) & \eta(2d_9-d_7-d_{12}+u) \\ \hline
d_{10} & \frac{\eta}{2}(2s_1-d_9+d_{10}) & \frac{\eta}{2}(2s_2-d_6+d_{10}) & \frac{\eta}{2}(2s_3-d_1+d_{10}) & \frac{\eta}{2}(2s_4-d_6+d_{10}) & \frac{\eta}{2}(2s_5-d_9+d_{10}) & \frac{\eta}{2}(2s_6-d_1+d_{10}) & \eta(d_1+d_{10}-s_3-s_6) & \frac{\eta}{2}(d_2-d_5+d_{10}) & \eta(d_3+d_{10}-d_{13}) & 0 & \frac{\eta}{2}(d_5-d_2+d_{10}) & \eta(d_6+d_{10}-s_2-s_4) & 0  & \frac{\eta}{2}(2d_8+2d_{10}-u) & \eta(d_9+d_{10}-s_1-s_5) & d_{10} & \frac{\eta}{2}(2d_{11}+2d_{10}-u) & 0 & \eta(d_{10}-d_3+d_{13}) & \eta(2d_{10}-d_8-d_{11}+u) \\ \hline
d_{11} & \frac{\eta}{2}(2s_1+d_{11}-d_{12}) & \frac{\eta}{2}(2s_2-d_4+d_{11}) & \frac{\eta}{2}(2s_3-d_7+d_{11}) & \frac{\eta}{2}(2s_4+d_{11}-d_{12}) & \frac{\eta}{2}(2s_5-d_7+d_{11}) & \frac{\eta}{2}(2s_6-d_4+d_{11}) & 0 & \eta(d_2+d_{11}-d_{13}) & \frac{\eta}{2}(d_3-d_8+d_{11}) & \eta(d_4+d_{11}-s_2-s_6) & \frac{\eta}{2}(2d_5+2d_{11}-u) & 0 & \eta(d_7+d_{11}-s_3-s_5) & \frac{\eta}{2}(d_8-d_3+d_{11}) & 0 & \frac{\eta}{2}(2d_{11}+2d_{10}-u) & d_{11} & \eta(d_{11}+d_{12}-s_1-s_4) & \eta(d_{11}-d_2+d_{13}) & \eta(2d_{11}-d_5-d_{10}+u) \\ \hline
d_{12} & \frac{\eta}{2}(2s_1-d_{11}+d_{12}) & \frac{\eta}{2}(2s_2-d_8+d_{12}) & \frac{\eta}{2}(2s_3-d_3+d_{12}) & \frac{\eta}{2}(2s_4-d_{11}+d_{12}) & \frac{\eta}{2}(2s_5-d_8+d_{12}) & \frac{\eta}{2}(2s_6-d_3+d_{12}) & \eta(d_1+d_{12}-d_{13}) & 0 & \eta(d_3+d_{12}-s_3-s_6) & \frac{\eta}{2}(d_4-d_7+d_{12}) & 0 & \frac{\eta}{2}(2d_6+2d_{12}-u) & \frac{\eta}{2}(d_7+d_{12}-d_4) & \eta(d_8+d_{12}-s_2-s_5) & \frac{\eta}{2}(2d_9+2d_{12}-u) & 0 & \eta(d_{11}+d_{12}-s_1-s_4) & d_{12} & \eta(d_{12}-d_1+d_{13}) & \eta(2d_{12}-d_6-d_9+u) \\ \hline
d_{13} & 0 & 0 & 0 & 0 & 0 & 0 & \eta(d_1-d_{12}+d_{13}) & \eta(d_2-d_{11}+d_{13}) & \eta(d_3-d_{10}+d_{13}) & \eta(d_4-d_9+d_{13}) & \eta(d_5-d_8+d_{13}) & \eta(d_6-d_7+d_{13}) & \eta(d_7-d_6+d_{13}) & \eta(d_8-d_5+d_{13}) & \eta(d_9-d_4+d_{13}) & \eta(d_{10}-d_3+d_{13}) & \eta(d_{11}-d_2+d_{13}) & \eta(d_{12}-d_1+d_{13}) & d_{13} & {\eta}d_{13}  \\ \hline
u & 0 & 0 & 0 & 0 & 0 & 0 & \eta(2d_1-d_4-d_7+u) & \eta(2d_2-d_3-d_8+u) & \eta(2d_3-d_2-d_5+u) & \eta(2d_4-d_1-d_6+u)  & \eta(2d_5-d_3-d_{11}+u) & \eta(2d_6-d_4-d_{12}+u) & \eta(2d_7-d_1-d_9+u) & \eta(2d_8-d_2-d_{10}+u) & \eta(2d_9-d_7-d_{12}+u) & \eta(2d_{10}-d_8-d_{11}+u) & \eta(2d_{11}-d_5-d_{10}+u) & \eta(2d_{12}-d_6-d_9+u) & {\eta}d_{13} & (\eta+1)u-{\eta}d_{13} \\ \hline
\end{tabu}$}
\caption{Table of products for the new 20-dimensional algebra}\label{t:algebra D4-3}
\endgroup
\end{center}
\end{sidewaystable}

\subsection{Diagram $D_5$}\label{sec:D5}
In this subsection, we suppose that the diagram on $Z$ is $D_5$.
Since $d$, $c$, $a$, $b$, and $e$ form the Coxeter diagram of type~$A_5$,
we infer that $G=\hat{G}$ is isomorphic to $S_6$, and we may identify support elements with transpositions in the following way: $a=(1,2)$, $b=(1,3)$, $c=(2,4)$, $d=(4,6)$, and $e=(3,5)$. 
Then the following elements belong to $A$:
\begin{center}
$\begin{aligned}
s_1:=&\,x=a=(1,2),\\
d_1:=&\,y=b+c=(1,3)+(2,4),\\
s_2:=&\,x^{\tau_y}=(3,4),\\
d_2:=&\,z=d+e=(3,5)+(4,6),\\
s_3:=&\,s_2^{\tau_z}=(5,6),\\
d_3:=&\,y^{\tau_x}=(1,4)+(2,3),\\
d_4:=&\,z^{\tau_{s_2}}=(3,6)+(4,5),\\
d_5:=&\,y^{\tau_z}=(1,5)+(2,6),\\
d_6:=&\,d_5^{\tau_{s_3}}=(1,6)+(2,5).\\
\end{aligned}$
\end{center}

Consider an involuntary automorphism $\tau=(1,2)(3,4)(5,6)$ of $G$.
Then it is easy to see that our nine elements are exactly orbit elements for the action of $H=\langle\tau\rangle$ on the set of points $a^G$. Therefore, $A$ coincides with the fixed subalgebra $M_H$.
Since there are no extras among orbit vectors, we infer that the flip subalgebra $A(\tau)$ is the whole $M_H$. This flip subalgebra is denoted by $Q_3(\eta)$ in~\cite{gjmss} and it has a generalization for arbitrary symmetric group $S_{2k}$ with $k\geq2$ \cite[Prop.~8.1 and Cor.~8.2]{gjmss}. Finally, observe that the written basis is exactly the same as that of $M_H$ in \cite[Prop.~8.1]{gjmss} for $k=3$.


\subsection{Diagram $D_7$}\label{sec:D7}
In this subsection, we suppose that the diagram on $Z$ is $D_7$.
Step by step we add relations from the presentation A18a of the Appendix in \cite{hs}, replacing $a$ by $b$ and vice versa. All relations come from the 3-transposition property of $G$. 

{\bf Case~1}. Assume that $(a^cd)^2=1$ and $(a^be)^2=1$. Clearly, elements $b$, $a$, and $c$ generate a subgroup isomorphic to $S_4$. Since $|ec|=|db|=2$, we find that $|a^{bc}e|=|a^be^c|=|a^be|=2$ and $|a^{bc}d|=|a^{cb}d|=|a^cd^b|=|a^cd|=2$.
Consequently, we see that $e$, $b$, $a^{bc}$, $c$, and $d$ form the Coxeter diagram of type $A_5$ and hence
$\hat{G}$ is a homomorphic image of $S_6$. On the other hand, if we take $a:=(1,2)$, $b:=(1,3)$, $c:=(2,4)$, $d:=(2,5)$, $e:=(1,6)$,
then the elements satisfy all relations of the diagram $D_7$ and, moreover, $a^cd=(1,4)(2,5)$ and $a^be=(2,3)(1,6)$ are elements of order two. Therefore, in this case $G=S_6$ and we can assume that $a$, $b$, $c$, $d$, and $e$ are as chosen. It is easy to see that elements $a^{bc}=(3,4)$, $b$, $c$, $d$, and $e$ form the diagram $D_5$. Note that $a^{bc}=x^{\tau_y}\in A$. Since $x\in\dla x^{\tau_y}, y\dra$ and $x^{\tau_y}\in\dla x, y\dra$, we infer that $A$ is isomorphic to the 9-dimensional algebra $Q_3(\eta)$ which is the unique algebra arising for the diagram $D_5$ in Subsection~\ref{sec:D5}.

\vspace{10pt}
Note that $y^{\tau_x}=b^a+c^a=a^b+a^c$ is a double axis in $A$. Now $a^b+a^c$ and $d+e$ must generate a primitive subalgebra in $A$.
It follows from~\cite[Theorem~5.5]{gjmss} that $|a^cd|=|a^be|$.
Therefore, we further assume that $(a^cd)^3=(a^be)^3=1$. 
Then \cite[Prop.~6.3]{hs} implies that $\langle a,b,c,d\rangle\simeq F_4(18) \simeq W_3(\Tilde{A}_4)$.

{\bf Case~2}. Assume that $e\in H=\langle a,b,c,d\rangle$, and so $G=H$. 
We use the description of $\Gamma(W_3(\Tilde{A}_4))$ introduced in Subsection~\ref{Wp(An)}. Identify elements of the diagram as follows:
$$a:=b_{1,2}, b:=b_{1,3}, c:=b_{2,4}, \text{ and } d:=c_{2,4}.$$
Since $e$ commutes with $d$ and $e\neq b$, we infer that $e\in\{c_{1,3},c_{3,1}\}$.
First, we assume that $e=c_{1,3}=b^{(acd)^2}$.

We proceed by listing elements that lie in $A$:
\begin{center}
$\begin{aligned}
s_1:=&\,x=a=b_{1,2},\\
s_2:=&\,x^{\tau_y}=b_{1,2}^{b_{1,3}b_{2,4}}=b_{3,4},\\ 
d_1:=&\,y=b+c=b_{1,3}+b_{2,4},\\
d_2:=&\,d_1^{\tau_x}=b_{1,3}^{b_{1,2}}+b_{2,4}^{b_{1,2}}=b_{2,3}+b_{1,4},\\ 
d_3:=&\,z=d+e=c_{2,4}+c_{1,3},\\
d_4:=&\,d_3^{\tau_x}=c_{2,4}^{b_{1,2}}+c_{1,3}^{b_{1,2}}=c_{1,4}+c_{2,3},\\ 
d_5:=&\,d_2^{\tau_z}=b_{2,3}^{c_{2,4}c_{1,3}}+b_{1,4}^{c_{2,4}c_{1,3}}=c_{3,4}^{c_{1,3}}+c_{2,1}^{c_{1,3}}=c_{4,1}+c_{3,2},\\ 
d_6:=&\,d_5^{\tau_x}=c_{4,1}^{b_{1,2}}+c_{3,2}^{b_{1,2}}=c_{4,2}+c_{3,1},\\ 
u:=&\,\frac{2}{\eta}(\eta(d_1+d_4)-d_1\cdot d_4)=2(b_{1,3}+b_{2,4}+c_{1,4}+c_{2,3}) \\
   &\,-\frac{2}{\eta}(b_{1,3}\cdot c_{1,4}+b_{1,3}\cdot c_{2,3}+b_{2,4}\cdot c_{1,4}+b_{2,4}\cdot c_{2,3})\\ 
   &\,=c_{3,4}+c_{1,2}+c_{2,1}+c_{4,3}.\\ 
\end{aligned}$
\end{center}

These nine elements include all 18 points of the Fischer space
and have disjoint supports, so they are linearly independent. 
A straightforward calculation shows that this set of elements of $A$ is closed under the multiplication. 
We list the results of the pairwise products in Table~\ref{t:algebra D7-1}. Consequently, we infer that $A$ is 9-dimensional. We provide several examples of calculations:
\begin{multline*}
s_1 \cdot d_3= b_{1,2} \cdot (c_{2,4}+c_{1,3}) = \frac{\eta}{2}[(b_{1,2} + c_{2,4} - b_{1,2}^{c_{2,4}})+
(b_{1,2} + c_{1,3} - b_{1,2}^{c_{1,3}})]\\= \frac{\eta}{2} (2s_1+d_3-c_{1,4}-c_{2,3})=\frac{\eta}{2} (2s_1+d_3-d_4)
\end{multline*}
\begin{multline*}
s_1 \cdot u = b_{1,2} \cdot (c_{3,4}+c_{1,2}+c_{2,1}+c_{4,3}) = 
b_{1,2} \cdot (c_{1,2}+c_{2,1})\\ = \frac{\eta}{2}[2b_{1,2} + c_{1,2}+c_{2,1} - (b_{1,2}^{c_{1,2}} +  b_{1,2}^{c_{2,1}}) ]= \frac{\eta}{2}  [2b_{1,2} + c_{1,2}+c_{2,1} - (c_{2,1} +  c_{1,2}) ] = \eta s_1.
\end{multline*}
\begin{multline*}
d_1 \cdot d_3=(b_{1,3}+b_{2,4}) \cdot (c_{2,4}+c_{1,3}) = b_{1,3} \cdot c_{1,3} + b_{2,4} \cdot c_{2,4}\\=\frac{\eta}{2}  [(b_{1,3}+c_{1,3}-c_{3,1}) + (b_{2,4}+c_{2,4}-c_{4,2})]=\frac{\eta}{2} (d_1+d_3-d_6)
\end{multline*}

\begin{multline*}
d_1 \cdot d_5 = (b_{1,3}+b_{2,4}) \cdot (c_{4,1}+c_{3,2}) = \frac{\eta}{2} [2d_1+2d_5 - (b_{1,3}^{c_{4,1}} +
b_{1,3}^{c_{3,2}} + b_{2,4}^{c_{4,1}} + b_{2,4}^{c_{3,2}})]\\=
\frac{\eta}{2} [2d_1+2d_5 - (c_{4,3} +c_{1,2} + c_{2,1} + c_{3,4})] = \frac{\eta}{2} (2d_1+2d_5-u).
\end{multline*}
\begin{multline*}
d_1 \cdot u = (b_{1,3}+b_{2,4}) \cdot (c_{3,4}+c_{1,2}+c_{2,1}+c_{4,3})\\ = \frac{\eta}{2} [4d_1 + 2u
- (b_{1,3}^{c_{3,4}} + b_{1,3}^{c_{1,2}} + b_{1,3}^{c_{2,1}} + b_{1,3}^{c_{4,3}} +
b_{2,4}^{c_{3,4}} + b_{2,4}^{c_{1,2}} + b_{2,4}^{c_{2,1}} + b_{2,4}^{c_{4,3}})]\\=
\frac{\eta}{2} (4d_1 + 2u - (c_{1,4}+c_{3,2}+c_{2,3}+c_{4,1}+c_{3,2}+c_{1,4}+c_{4,1}+c_{2,3})=
\eta(2d_1+u-d_4-d_5).
\end{multline*}
\begin{multline*}
u\cdot u=c_{1,2}\cdot c_{1,2}+2c_{1,2}\cdot c_{2,1}+c_{2,1}\cdot c_{2,1}+c_{3,4}\cdot c_{3,4}+2c_{3,4}\cdot c_{4,3}+c_{4,3}\cdot c_{4,3}\\=u+\eta(c_{1,2}+c_{2,1}-b_{1,2}+c_{3,4}+c_{4,3}-b_{3,4})=
(\eta+1)u-\eta(s_1+s_2).
\end{multline*}

Denote $\tau=b_{1,2}b_{3,4}$ and $H=\langle\tau\rangle$. 
By Proposition~\ref{basis}, $\tau$ induces an involutive automorphism of the Fischer space and $\tau$-orbit vectors are two singles $s_1$ and $s_2$, six doubles $d_1,\ldots,d_6$, and two extras $c_{1,2}+c_{2,1}$ and $c_{3,4}+c_{4,3}$. 
In particular, the fixed subalgebra $M_H$ is 10-dimensional.
Since $A$ includes all doubles of $M_H$, we infer that the flip algebra $A(\tau)$ coincides with $A$.

Using GAP, we find that the determinant of the Gram matrix for $A$ with respect to the basis $s_1,s_2,d_1,\ldots,d_6$, and $u$ equals $-2^5\cdot(\eta-2)^5(\eta-\frac{1}{2})^2(7\eta+1)$.
Hence, the algebra is simple except for the cases when 
$\eta\neq2$ or $\operatorname{char}\mathbb{F}\neq7$ and $\eta\neq-\frac{1}{7}$.

Note that there are two known 9-dimensional algebras arising in a similar manner, that is 
$Q_3(\eta)$, found by Joshi in \cite{Joshi-phd}, and $Q^3(\eta)$, found by Alsaeedi in \cite{alsaeedi-paper}, however in our case the basis includes a vector which is the sum of two extras, moreover, critical values for $\eta$ differ from ones for these two algebras.

\begin{table}[ht]
\begin{center}
\begingroup
\setlength{\tabcolsep}{20pt}
\scalebox{.5}{
$\begin{tabu}[h!]{|c||c|c|c|c|c|c|c|c|c|}
\hline
 & s_1 & s_2 & d_1 & d_2 & d_3 & d_4 & d_5 & d_6 & u \\ \hline
s_1 & s_1   & 0 & \frac{\eta}{2}(2s_1+d_1-d_2)  & \frac{\eta}{2}(2s_1-d_1+d_2) & \frac{\eta}{2}(2s_1+d_3-d_4) & \frac{\eta}{2}(2s_1-d_3+d_4) & \frac{\eta}{2}(2s_1+d_5-d_6) & \frac{\eta}{2}(2s_1-d_5+d_6)  & \eta{s}_1  \\ \hline 
s_2 & 0 & s_2 & \frac{\eta}{2}(2s_2+d_1-d_2)  & \frac{\eta}{2}(2s_2-d_1+d_2) & \frac{\eta}{2}(2s_2+d_3-d_4) & \frac{\eta}{2}(2s_2-d_3+d_4) & \frac{\eta}{2}(2s_2+d_5-d_6) & \frac{\eta}{2}(2s_2-d_5+d_6) & \eta{s}_2 \\ \hline 
d_1 & \frac{\eta}{2}(2s_1+d_1-d_2) & \frac{\eta}{2}(2s_2+d_1-d_2) & d_1 & \begin{tabu}{@{}c@{}} \eta(d_1+d_2 \\ -s_1-s_2) \end{tabu} & \frac{\eta}{2}(d_1+d_3-d_6) & \frac{\eta}{2}(2d_1+2d_4-u) & \frac{\eta}{2}(2d_1+2d_5-u) & \frac{\eta}{2}(d_1-d_3+d_6) & \begin{tabu}{@{}c@{}} \eta(2d_1+u \\ -d_4-d_5) \end{tabu} \\ \hline 
d_2 & \frac{\eta}{2}(2s_1-d_1+d_2) & \frac{\eta}{2}(2s_2-d_1+d_2) & \begin{tabu}{@{}c@{}} \eta(d_1+d_2 \\ -s_1-s_2) \end{tabu} & d_2 & \frac{\eta}{2}(2d_2+2d_3-u) & \frac{\eta}{2}(d_2+d_4-d_5) & \frac{\eta}{2}(d_2-d_4+d_5) & \frac{\eta}{2}(2d_2+2d_6-u) & \begin{tabu}{@{}c@{}} \eta(2d_2+u \\ -d_3-d_6) \end{tabu}  \\ \hline 
d_3 & \frac{\eta}{2}(2s_1+d_3-d_4) & \frac{\eta}{2}(2s_2+d_3-d_4) & \frac{\eta}{2}(d_1+d_3-d_6) & \frac{\eta}{2}(2d_2+2d_3-u) & d_3 & \begin{tabu}{@{}c@{}} \eta(d_3+d_4 \\ -s_1-s_2) \end{tabu} & \frac{\eta}{2}(2d_3+2d_5-u) & \frac{\eta}{2}(d_3+d_6-d_1) & \begin{tabu}{@{}c@{}} \eta(2d_3+u \\-d_2-d_5) \end{tabu} \\ \hline 
d_4 & \frac{\eta}{2}(2s_1-d_3+d_4)  & \frac{\eta}{2}(2s_2-d_3+d_4) & \frac{\eta}{2}(2d_1+2d_4-u) & \frac{\eta}{2}(d_2+d_4-d_5) & \begin{tabu}{@{}c@{}} \eta(d_3+d_4 \\ -s_1-s_2) \end{tabu} & d_4 & \frac{\eta}{2}(d_4-d_2+d_5) & \frac{\eta}{2}(2d_4+2d_6-u) & \begin{tabu}{@{}c@{}} \eta(2d_4+u \\ -d_1-d_6) \end{tabu} \\ \hline 
d_5 & \frac{\eta}{2}(2s_1+d_5-d_6) & \frac{\eta}{2}(2s_2+d_5-d_6) & \frac{\eta}{2}(2d_1+2d_5-u) & \frac{\eta}{2}(d_2-d_4+d_5) & \frac{\eta}{2}(2d_3+2d_5-u) & \frac{\eta}{2}(d_5-d_2+d_4) & d_5 & \begin{tabu}{@{}c@{}} \eta(d_5+d_6 \\ -s_1-s_2) \end{tabu} & \begin{tabu}{@{}c@{}} \eta(2d_5+u \\ -d_1-d_3) \end{tabu} \\ \hline 
d_6 & \frac{\eta}{2}(2s_1-d_5+d_6) & \frac{\eta}{2}(2s_2-d_5+d_6) & \frac{\eta}{2}(d_1-d_3+d_6) & \frac{\eta}{2}(2d_2+2d_6-u) & \frac{\eta}{2}(d_3-d_1+d_6) & \frac{\eta}{2}(2d_4+2d_6-u) & \begin{tabu}{@{}c@{}} \eta(d_5+d_6 \\ -s_1-s_2) \end{tabu} & d_6 & \begin{tabu}{@{}c@{}} \eta(2d_6+u \\ -d_2-d_4) \end{tabu}  \\ \hline 
u & \eta{s}_1  & \eta{s}_2  & \begin{tabu}{@{}c@{}} \eta(2d_1+u \\ -d_4-d_5) \end{tabu} & \begin{tabu}{@{}c@{}} \eta(2d_2+u \\ -d_3-d_6) \end{tabu} & \begin{tabu}{@{}c@{}} \eta(2d_3+u \\ -d_2-d_5) \end{tabu} & \begin{tabu}{@{}c@{}} \eta(2d_4+u \\ -d_1-d_6) \end{tabu} & \begin{tabu}{@{}c@{}} \eta(2d_5+u \\ -d_1-d_3) \end{tabu} & \begin{tabu}{@{}c@{}} \eta(2d_6+u \\ -d_2-d_4) \end{tabu} & (\eta+1)u-\eta{s}_1-\eta{s}_2  \\ \hline 
\end{tabu}$}
\caption{The new $9$-dimensional algebra}\label{t:algebra D7-1}
\endgroup
\end{center}
\end{table}

Suppose now that $e=c_{3,1}=b^{(acd)^{-2}}=b^{(dca)^2}$. We claim that the algebra $A$ coincides with the 12-dimensional algebra $3Q_2(\eta)$ which was obtained during the classification of the primitive algebras generated by two single axes and one double axis \cite[Table~11]{gjmss}. 
According to \cite[Section 6.1]{gjmss}, elements $b_{1,2}=a$, 
$c_{3,4}$, and $b_{1,3}+b_{2,4}=y$ generate the algebra $3Q_2(\eta)$. Observe that $z=d+e$ belongs to the basis of $3Q_2(\eta)$ provided in \cite{gjmss}. This implies that $A$ is a subalgebra in $3Q_2(\eta)$. It remains to show that $c_{3,4}$ is in $A$. Note that $x^{\tau_z}=b_{1,2}^{c_{3,1}c_{2,4}}=c_{3,2}^{c_{2,4}}=c_{3,4}$. Since, $x^{\tau_z}\in A$, we infer that $A=3Q_2(\eta)$. 
Finally, we note that this construction have been generalized to a series of algebras $3Q_k(\eta)$ for arbitrary $k\geq1$ \cite[Proposition 9.3]{gjmss}.

Further we assume that $e\in G\setminus H$. 
Recall that elements $a$, $b$, $c$, $d$, and $e$ satisfy the set of relations $R_0=R(a,b,c,d,e)\cup\{(a^cd)^3,(a^be)^3\}$. 
Now we continue to extend the list of defining relations of $G$.

\textbf{Case 3.} Assume that $(b^{(acd)^2}e)^2=1$. 
Since $e\not\in H$, we have $|b^{(acd)^2}e|=2$.
As above, we see that $b^{(acd)^2}=c_{1,3}$.
Denote $A=c_{1,3}$, $B=a=b_{1,2}$, $C=d=c_{2,4}$, $D=c=b_{2,4}$ and $E=e$.
Then $|AB|=|c_{1,3}b_{1,2}|=3$, $|AC|=|c_{1,3}c_{2,4}|=2$, $|AD|=|c_{1,3}b_{2,4}|=2$, and $|AE|=|b^{(acd)^2}e|=2$. Similarly, we see that $|BC|=|b_{1,2}c_{2,4}|=3$, $|BD|=|b_{1,2}b_{2,4}|=3$, 
$|BE|=|ae|=3$, $|CD|=|c_{2,4}b_{2,4}|=3$, $|CE|=|de|=2$, and $|DE|=|ce|=2$. Since $|a^dc|=3$, we have $|B^CD|=3$.
By definition, $A=b^{(acd)^2}=b^{(BDC)^2}$ and hence $b=A^{(CDB)^2}$. Since $A$ commutes with $C$ and $D$, we find that $b=A^{BCDB}$. Finally, $b$ commutes with $C$ and $D$,
so $b=A^{BCDBCD}=A^{(BCD)^2}$. Then $|EA^{(BCD)^2}|=|eb|=3$.
It follows from the presentation A17b in \cite{hs}
that $G=\langle a,b,c,d,e\rangle=\langle A,B,C,D,E\rangle$ is a homomorphic image of $F(5,72)\simeq 2^{1+8}:3^3:S_4$. 
This can be obtained directly enumerating cosets of $\hat G / \langle R_0\cup\{(b^{(acd)^2}e)^2\} \rangle$ by $H$. Inspecting the homomorphic images of $F(5,72)$, we find that $G\simeq F(5,72)$ or $G\simeq F(5,72)/Z(F(5,72))$, where $|Z(F(5,72)|=2$.
According to discussion in \cite[Example~17]{hs},
$F(5,72)$ has the same central type as $Wr(A_4,4)$. Therefore, we can assume that $G=Wr(A_4,4)$. We use the description of its Fischer space from Lemma~\ref{l:Wr(A4,n)}. Identify the elements of $D_7$-diagram with the following points of $\Gamma(G)$:
$$a:=(1,2),b:=(1,3),c:=(2,4),d:=(1,4,2).(2,4),e:=(1,2,3).(1,3)$$
Then $a^c=(1,4)$ and $a^b=(2,3)$, so $|a^cd|=|a^be|=3$.
Moreover, $b^{(acd)^2}=(1,4,2).(1,3)$ and hence $|b^{(acd)^2}e|=2$.
Since the chosen elements satisfy all the relations of this case and generate $Wr(A_4,4)$, we can use them for calculations in $A$.

As in the most of the previous cases, we see that the following four elements belong to $A$ and generate the subalgebra $Q_2(\eta)$:
$$ s_1:=x=a=(1,2), s_2:=x^{\tau_y}=(3,4),$$ 
$$d_1:=y=b+c=(1,3)+(2,4), d_2:=y^{\tau_x}=(1,4)+(2,3).$$
Denote $d_9:=z=d+e=(1,4,2).(2,4)+(1,2,3).(1,3)$.

Consider the involutive automorphism $\tau$ of $G$ as in Proposition~\ref{p:fixed:Wr(A_4,n)}
and denote $H=\langle\tau\rangle$.
By Proposition~\ref{p:fixed:Wr(A_4,n)}, the fixed subalgebra $M_H$ has dimension $40$ and a basis with $8$ singles, $24$ doubles, and $8$ extras with respect to $\tau$. Consider a permutation $\sigma=(1,2)(3,4)$. If $\pi_1\in A_4$ and $\pi_2$ is a transposition in  
$S_4$, then we see that $(\pi_1.\pi_2)^{\tau}=\pi_1^\sigma.\pi_2^\sigma$. 
Applying these rules, we find that $\tau$ fixes $a$, switches $b$ with $c$ and $d$ with $e$. Therefore, $A$ is a subalgebra of $M_H$. Now we prove that $A$ 
is $39$-dimensional with a basis including $6$ singles and all $24$ doubles of $M_H$.

We claim that six singles corresponding to $\tau$ are in $A$, and two remaining singles form a double axis $(1,2)(3,4).(1,2)+(1,2)(3,4).(3,4)$. 
Applying Miyamoto involutions, we find that the following elements belong to $A$: 
$$s_3:=x^{\tau_z}=a^{de}=((1,4,2).(1,4))^e=(1,3)(2,4).(3,4), s_4:=s_3^{\tau_y}=(1,3)(2,4).(1,2),$$ $$s_5:=s_4^{\tau_z}=(1,3)(2,4).(1,2)^{de}=((1,3,4).(1,4))^{(1,2,3).(1,3))}=  (1,4)(2,3).(3,4),$$ and $s_6:=s_5^{\tau_y}=(1,4)(2,3).(1,2)$.

Consider 24 doubles of $M_H$ in three steps. 
First, we look at eight elements of shapes $t.(1,3)+t.(2,4)$ and $t.(1,4)+t.(2,3)$, where $t\in K_4$. We claim that all these doubles are in $A$. Observe that two of them are $d_1$ and $d_2$. Then
\begin{multline*}
d_3:=d_1^{\tau_{s_5}}=(1,3)^{(1,4)(2,3).(2,4)}+(2,3)^{(1,4)(2,3).(3,4)}\\=
(1,4)(2,3).(1,4)+(1,4)(2,3).(2,3)
\end{multline*}
lies in $A$. Similarly, we find that 
$d_4:=d_1^{\tau_{s_3}}=(1,3)(2,4).(1,4)+(1,3)(2,4).(2,3)$ belongs to $A$. 
Using Lemma~\ref{l:lines}, we get the following double in $A$:
\begin{multline*}
d_5:=(d_3^{\tau_{s_3}})^{\tau_x}=((1,4)(2,3).(1,4))^{\tau_{s_3}\tau_x}+((1,4)(2,3).(2,3))^{\tau_{s_3}\tau_x}\\=((1,2)(3,4).(1,3))^{\tau_x}+((1,2)(3,4).(2,4))^{\tau_x}=(1,2)(3,4).(2,3)+(1,2)(3,4).(1,4).\end{multline*}
Applying $\tau_x$, we find the remaining three doubles:
$$d_6:=d_3^{\tau_x}=(1,4)(2,3).(2,4)+(1,4)(2,3).(1,3),$$
$$d_7:=d_4^{\tau_x}=(1,3)(2,4).(2,4)+(1,3)(2,4).(1,3), \text{ and}$$
$$d_8:=d_5^{\tau_x}=(1,2)(3,4).(2,4)+(1,2)(3,4).(1,3).$$

Now consider $16$ doubles of the shapes $t.(1,3)+t^{\tau}.(2,4)$ and $t.(1,4)+t^{\tau}.(2,3)$, where $t$ is a 3-cycle in $A_4$. We claim that all these elements belong  to $A$. Since $\tau_x$ switches elements of these two shapes, it is sufficient to show that the elements of the first shape are in $A$. Suppose that $s\in K_4$ and $t$ is a $3$-cycle in $A_4$. We know that $t.(1,3)+t^\sigma.(2,4)$ is a double in $M_H$ and $s.(1,4)+s.(2,3)$ belongs to $A$. Then
\begin{multline*}
(t.(1,3)+t^\sigma.(2,4))^{\tau_{s.(1,4)+s.(2,3)}}=
\left((t.(1,3))^{s.(1,4)}+((t^\sigma).(2,4))^{s.(1,4)}\right)^{\tau_{s.(2,3)}}\\=
\left(t^{-1}s.(3,4)+s(t^{-1})^\sigma.(1,2)\right)^{\tau_{s.(2,3)}}=
(t^{-1})^s.(2,4)+(t^{-1})^{s\sigma}.(1,3).
\end{multline*}
If $t=(1,2,3)$, then the corresponding double is $(1,2,3).(1,3)+(1,4,2).(2,4)=d_9=z\in A$.
Applying the equality for this $t$ and $s\in\{(1,2)(3,4)$, $(1,3)(2,4)$, $(1,4)(2,3)\}$,
we obtain the following elements of $A$: 
$d_{10}:=d_9^{\tau_{d_5}}=(1,2,4).(2,4)+(1,3,2).(1,3)$,
$d_{11}:=d_9^{\tau_{d_4}}=(1,4,3).(2,4)+(2,3,4).(1,3)$, and $d_{12}:=d_9^{\tau_{d_3}}=(2,3,4).(2,4)+(1,4,3).(1,3)$.
Now we apply $\tau_{s_1}\tau_{s_2}$:
\begin{multline*}
d_{13}:=d_{9}^{\tau_{s_1}\tau_{s_2}}=((1,4,2).(2,4))^{(1,2)(3,4)}+((1,2,3).(1,3))^{(1,2)(3,4)}\\=(1,4,2).(1,3)+(1,2,3).(2,4),
\end{multline*}
$$d_{14}:=d_{10}^{\tau_{s_1}\tau_{s_2}}=(1,3,2).(2,4)+(1,2,4).(1,3).$$
Similarly, we find that doubles
$$d_{15}:=d_9^{\tau_{s_1}\tau_{s_4}}=((1,2,3).(2,3)+(1,4,2).(1,4))^{(1,3)(2,4).(1,2)}=
(2,4,3).(1,3)+(1,3,4).(2,4)$$ 
and $d_{16}:=d_{15}^{\tau_{s_1}\tau_{s_2}}=(2,4,3).(2,4)+(1,3,4).(1,3)$ belong to $A$.
Consequently, we prove that all eight required doubles are in $A$.
As was mentioned above, applying $\tau_x$ to them, we obtain the eight remaining doubles of the second shape: $d_{i+8}:=d_{i}^{\tau_x}$, where $9\leq i\leq 16$.

Since 
$$d_1\cdot d_5=((1,3)+(2,4)(\sigma.(1,4)+\sigma.(2,3))=\eta(d_1+d_5-\sigma.(1,2)-\sigma.(3.4)),$$
we infer that $d_{25}:=\frac{1}{\eta}(\eta(d_1+d_5)-d_1\cdot d_5)=(1,2)(3,4).(1,2)+(1,2)(3,4).(3,4)$ belongs to $A$.

Now we find two elements in $A$ using products $d_2\cdot d_9$ and $d_1\cdot d_{18}$:
\begin{multline*}
q_1:=\frac{2}{\eta}(-d_2\cdot d_9+\eta(d_2+d_9))=\frac{2}{\eta}[-((1,4)+(2,3))\cdot((1,4,2).(2,4)+(1,2,3).(1,3))\\+
\eta((1,4)+(2,3)+(1,4,2).(2,4)+(1,2,3).(1,3))]\\=(1,4,2).(2,1)+(1,2,3).(4,3)+(1,4,2).(3,4)+(1,2,3).(1,2)\\=
(1,2,3).(1,2)+(1,2,4).(1,2)+(1,3,2).(3,4)+(1,4,2).(3,4)
\end{multline*}
and
\begin{multline*}
q_2:=\frac{2}{\eta}(-d_1\cdot d_{18}+\eta(d_1+d_{18}))=\frac{2}{\eta}[-((1,3)+(2,4))\cdot((1,2,4).(1,4)+(1,3,2).(2,3))\\+
\eta((1,3)+(2,4)+(1,2,4).(1,4)+(1,3,2).(2,3))]\\=(1,2,4).(3,4)+(1,3,2).(2,1)+(1,2,4).(1,2)+(1,3,2).(4,3)\\=
(1,2,3).(1,2)+(1,2,4).(1,2)+(1,2,3).(3,4)+(1,2,4).(3,4).
\end{multline*}

Since
\begin{multline*}
q_1\cdot q_2=\newline
((1,2,3).(1,2))^2+2(1,2,3).(1,2)\cdot(1,2,4).(1,2)+((1,2,4).(1,2))^2\\+
(1,2,4).(3,4)\cdot(1,4,2).(3,4) + (1,2,3).(3,4) \cdot(1,3,2).(3,4)\\+(1,3,2).(3,4) \cdot(1,2,4).(3,4)+(1,4,2).(3,4)\cdot (1,2,3).(3,4)\\
=(1+\eta)[(1,2,3).(1,2)+(1,2,4).(1,2)]-\eta (1,2)(3,4).(1,2)\\ +
\frac{\eta}{2}[(1,4,2).(3,4)+(1,2,4).(3,4)+(1,2,3).(3,4)+(1,3,2).(3,4)]-\eta (3,4)
\\=[(1,2,3).(1,2)+(1,2,4).(1,2)]+\frac{\eta}{2}(q_1+q_2)-\eta [(1,2)(3,4).(1,2)+s_2],
\end{multline*}
we infer that 
$$m_1:=q_1\cdot q_2-\frac{\eta}{2}(q_1+q_2)+\eta s_2=(1,2,3).(1,2)+(1,2,4).(1,2)-\eta (1,2)(3,4).(1,2)$$
belongs to $A$.
Applying Miyamoto involutions, we find that the following seven elements lie in $A$:
$$m_2:=m_1^{\tau_a}=[(1,3,2).(1,2)+(1,4,2).(1,2)]-\eta (1,2)(3,4).(1,2),$$
$$m_3:=m_1^{\tau_{s_4}}=[(1,4,3).(1,2)+(2,4,3).(1,2)]-\eta (1,2)(3,4).(1,2),$$
$$m_4:=m_1^{\tau_{s_6}}=[(2,3,4).(1,2)+(1,3,4).(1,2)]-\eta (1,2)(3,4).(1,2),$$
$$m_5:=m_1^{\tau_y}=[(1,2,3).(3,4)+(1,2,4).(3,4)]-\eta (1,2)(3,4).(3,4),$$
$$m_6:=m_2^{\tau_y}=[(1,3,2).(3,4)+(1,4,2).(3,4)]-\eta (1,2)(3,4).(3,4),$$
$$m_7:=m_3^{\tau_y}=[(1,4,3).(3,4)+(2,4,3).(3,4)]-\eta (1,2)(3,4).(3,4),$$
$$m_8:=m_4^{\tau_y}=[(2,3,4).(3,4)+(1,3,4).(3,4)]-\eta (1,2)(3,4).(3,4).$$

We find 39 elements in $A$: $s_1,\ldots, s_6$, $d_1,\ldots, d_{25}$, and $m_1,\ldots, m_8$.
Each of 72 points of the Fischer space except $(1,2)(3,4).(1,2)$ and $(1,2)(3,4).(3,4)$ is a summand in exactly one of these 39 elements. Therefore, these 39 elements are linearly independent.
Recall that $A$ is a subalgebra in the 40-dimensional algebra $M_H$. Denote $w=(1,2)(3,4).(1,2)-(1,2)(3,4).(3,4)$. Now we prove that $A$ lies in the orthogonal complement to $\langle w\rangle$ in $M_H$ with respect to the Frobenius form.
First we find the products of $y$ and $z$ with $t$:
\begin{multline*}
yw=(b+c)\cdot((1,2)(3,4).(1,2)-(1,2)(3,4).(3,4))=\frac{\eta}{2}(b+(1,2)(3,4).(1,2)
\\-(1,2)(3,4).(2,3))-\frac{\eta}{2}(b+(1,2)(3,4).(3,4)-(1,2)(3,4).(1,4))
\\+\frac{\eta}{2}(c+(1,2)(3,4).(1,2)-(1,2)(3,4).(1,4))-\frac{\eta}{2}(c+(1,2)(3,4).(3,4)
\\-(1,2)(3,4).(2,3))=\eta{w};
\end{multline*}
and 
\begin{multline*}
zw=(d+e)\cdot((1,2)(3,4).(1,2)-(1,2)(3,4).(3,4))=\frac{\eta}{2}(d+(1,2)(3,4).(1,2)
\\-(2,4,3).(1,4))-\frac{\eta}{2}(d+(1,2)(3,4).(3,4)-(1,3,4).(2,3) ])
\\+\frac{\eta}{2}(e+(1,2)(3,4).(1,2)-(1,3,4).(2,3)])-\frac{\eta}{2}(e+(1,2)(3,4).(3,4)
\\-(2,4,3).(1,4))=\eta{w}.
\end{multline*}
Now we prove that products $x_1\cdot x_2\cdots x_n$, where $n\geq1$ and $x_i\in\{x,y,z\}$, are orthogonal to $w$ by induction on $n$. 
Since $xw=0$, we have $(x,w)=0$. We know that $yw=zw=\eta{w}$, so 
$(y,w)=(y^2,w)=(y,yw)=\eta(y,w)$ and hence $(y,w)=0$. Similarly, we get that $(z,w)=0$. To finish the induction 
we verify that if $(t,w)=0$, then elements $xt$, $yt$, and $zt$ are orthogonal to $w$. Clearly, $(xt,w)=(t,xw)=0$. Now we see that $(yt,w)=(y,tw)=(yw,t)=(\eta{w},t)=\eta(w,t)=0$
and $(zt,w)=(z,tw)=(zw,t)=(\eta{w},t)=\eta(w,t)=0$.
So we get that all elements of $A$ are orthogonal to $w$. Since $(w,w)=2\neq0$,
the orthogonal complement $w^{\perp}$ is 39-dimensional and hence the dimension of $A$ is not greater than 39. On the other hand, we know that $A$ is at least 39-dimensional. So $A$ is 39-dimensional and
elements $s_1,\ldots, s_6$, $d_1,\ldots, d_{25}$, and $m_1,\ldots, m_8$ form a basis of $A$.

Using GAP, we find that the determinant of the Gram matrix for $A$ with respect to the basis equals 
$2^{49}\cdot(\eta-\frac{1}{2})^6\cdot(\eta-\frac{1}{8})^2\cdot(7\eta+\frac{1}{4})\cdot(\eta+\frac{1}{4})$. Hence the algebra is simple unless $\eta=\frac{1}{8}$, $-\frac{1}{28}$, $-\frac{1}{4}$, or $\operatorname{char}\mathbb{F}\neq7$ and $\eta=-\frac{1}{28}$ .

Finally, note that in this case the flip subalgebra $A(\tau)$ includes $x$, $y$, $z$, and $(1,2)(3,4).(1,2)$, so it coincides with whole $M_H$. However, one can verify that $A$ coincides with subalgebra of $M_H$ generated by all doubles.

\vspace{10px}
We further assume the set of relations $R_2= R_1\cup\{(b^{(acd)^2}e)^3\}$.

\textbf{Case 4.} Assume that $(b^{(acd)^{-2}}e)^2=1$. 
If $a$, $b$, $c$, and $d$ are elements of $H$ as above, then $b^{(acd)^2}=c_{1,3}$ and $b^{(acd)^{-2}}=c_{3,1}$.
Therefore, we infer that $|ec_{1,3}|=3$ and $|ec_{3,1}|=2$. 
Denote as in the previous case $B=a$, $C=d$, $D=c$, $E=e$, and set $A=c_{3,1}$.
Then $|AB|=3$, $|AC|=|AD|=|AE|=3$, $|BC|=|BD|=|BE|=|CD|=3$, $|CD|=3$, and $|CE|=|DE|=2$.
Moreover, $|B^CD|=|c_{1,4}b_{2,4}|=3$. Now $A^{(BCD)^2}=c_{3,1}^{(b_{1,2}c_{2,4}b_{2,4})^2}=c_{2,3}^{b_{1,2}c_{2,4}b_{2,4}}=c_{1,3}$ and hence $|EA^{(BCD)^2}|=|ec_{1,3}|=3$. Therefore, as in the previous case, it follows from the presentation A17b from \cite{hs} that $G$ is a homomorphic image of $F(5,72)$.
Inspecting the homomorphic images of $F(5,72)$, we find that $G\simeq F(5,72)$ or $G\simeq F(5,72)/Z(F(5,72))$. Since $F(5,72)$ has the same central type as $Wr(A_4,4)$,
we can assume that $G=Wr(A_4,4)$. Identify the elements of $D_7$-diagram with the following points of $\Gamma(G)$:
$$a:=(1,2),b:=(1,3),c:=(2,4),d:=(1,4,2).(2,4),e:=(1,3,2).(1,3)$$
Then $a^c=(1,4)$ and $a^b=(2,3)$, so $|a^cd|=|a^be|=3$.
As in the previous case, we have $b^{(acd)^2}=(1,4,2).(1,3)$.
Since $(1,4,2)(1,3,2)^{-1}=(1,4,3)$, Lemma~\ref{l:lines} implies that $|b^{(acd)^2}e|=3$.
Finally, we find that 
\begin{multline*}
b^{(acd)^{-2}}=b^{(dca)^2}=(1,3)^{[((1,4,2).(2,4))(2,4)(1,2)]^2}=(2,3)^{[((1,4,2).(2,4))(2,4)(1,2)]}\\
=((1,4,2).(3,4))^{(2,4)(1,2)}=(1,4,2).(3,1)=(1,2,4).(1,3)
\end{multline*}
and hence $|eb^{(acd)^{-2}}|=2$. Therefore, elements $a$, $b$, $c$, $d$, and $e$ satisfy all relations from $R_2$. We use them for calculations in $A$.
Consider the involutive automorphism $\tau$ of $G$ as in Proposition~\ref{p:fixed2:Wr(A_4,n)}. 
If $\pi_1\in A_4$ and $\pi_2$ is a transposition in $S_4$, then $(\pi_1.\pi_2)^{\tau}=\pi_1^{(3,4)}.\pi^{(1,2)(3,4)}$. Therefore, $a^\tau=a$, $b^\tau=c$, and $d^\tau=e$.
So $A$ is a subalgebra in the 42-dimensional fixed algebra $M_H$. We claim that $A$ coincides with $M_H$.
By Proposition~\ref{p:fixed2:Wr(A_4,n)}, $M_H$ has a basis comprising of 12 singles and 30 double with respect to 
$\tau$. We prove that all of them belong to $A$.
Starting with $a$ and applying Miyaomoto involutions, we find that all singles belongs to $A$:
\begin{center}
$\begin{aligned}
s_1:=&\,a=(1,2), \quad s_2:=s_1^{\tau_y}=(3,4),\\
s_3:=&\,s_1^{\tau_z}=(1,2)^{((1,4,2).(2,4))((1,3,2).(1,3))}=((1,4,2).(1.4))^{(1,3,2).(1,3)}=(2,3,4).(3,4),\\ 
s_4:=&\,s_3^{\tau_{s_2}}=((2,3,4).(3,4))^{(3,4)}=(2,4,3).(3,4),\\
s_5:=&\,s_3^{\tau_y}=((2,3,4).(3,4))^{(1,3)(2,4)}=(2,3,4).(1,2),\\ 
s_6:=&\,s_5^{\tau_x}=((2,3,4).(1,2))^{(1,2)}=(2,4,3).(1,2),\\
s_7:=&\,s_2^{\tau_z}=(3,4)^{((1,4,2).(2,4))((1,3,2).(1,3))}=((1,4,2).(2,3))^{(1,3,2).(1,3)}=(1,3,4).(1,2),\\
s_8:=&\,s_7^{\tau_x}=(1,4,3).(1,2),\\
s_9:=&\,s_7^{\tau_y}=(1,3,4).(3,4),\\
s_{10}:=&\,s_9^{\tau_{s_2}}=(1,4,3).(3,4),\\
s_{11}:=&\,s_6^{\tau_z}=((2,4,3).(1,2))^{((1,4,2).(2,4))((1,3,2).(1,3))}=((1,4,3).(1,4))^{(1,3,2).(1,3)}=(1,2)(3,4).(3,4)\\
s_{12}:=&\,s_{11}^{\tau_y}=(1,2)(3,4).(1,2).
\end{aligned}$
\end{center}

Consider a double $s.(2,4)+s^{(3,4)}.(1,3)$ and a single $t.(1,2)$, where $s,t\in A_4$.
Then
$$(s.(2,4)+s^{(3,4)}.(1,3))^{\tau_{t.(1,2)}}=(ts).(1,4)+(t^{-1}s^{(3,4)}).(2,3).$$
Applying this formula for $y=(2,4)+(1,3)$ and different singles, we obtain the following doubles:
\begin{center}
$\begin{aligned}
d_1:=&\,y^{\tau_x}=(1,4)+(2,3),\\
d_2:=&\,y^{\tau_{s_5}}=(2,3,4).(1,4)+(2,4,3).(2,3),\\
d_3:=&\,y^{\tau_{s_6}}=(2,4,3).(1,4)+(2,3,4).(2,3),\\
d_4:=&\,y^{\tau_{s_7}}=(1,3,4).(1,4)+(1,4,3).(2,3),\\
d_5:=&\,y^{\tau_{s_8}}=(1,4,3).(1,4)+(1,3,4).(2,3),\\
d_6:=&\,y^{\tau_{s_{12}}}=(1,2)(3,4).(1,4)+(1,2)(3,4).(2,3).
\end{aligned}$
\end{center}
Now we use $z=(1,4,2).(2,4)+(1,3,2).(1,3)$ instead of $y$ and get the following elements:
\begin{center}
$\begin{aligned}
d_7:=&\,z^{\tau_x}=(1,4,2).(1,4)+(1,3,2).(2,3),\\
d_8:=&\,z^{\tau_{s_5}}=((2,3,4)(1,4,2)).(1,4)+((2,4,3)(1,3,2)).(2,3)\\&\,=(1,4)(3,2).(1,4)+(1,3)(2,4).(2,3),\\
d_9:=&\,z^{\tau_{s_7}}=((1,3,4)(1,4,2)).(1,4)+((1,4,3)(1,3,2)).(2,3)\\&\,=(1,3,2).(1,4)+(1,4,2).(2,3).\\
\end{aligned}$
\end{center}

Applying $\tau_{d_1}$, we see that
\begin{multline*}
d_{10}:=d_7^{\tau_{d_1}}=((1,4,2).(1,4))^{(1,4)(2,3)}+((1,3,2).(2,3))^{(1,4)(2,3)}
\\=(1,2,4).(1,4)+(1,2,3).(2,3)
\end{multline*}
and
\begin{multline*}
d_{11}:=d_9^{\tau_{d_1}}=((1,3,2).(1,4))^{(1,4)(2,3)}+((1,4,2).(2,3))^{(1,4)(2,3)}
\\=(1,2,3).(1,4)+(1,2,4).(2,3).
\end{multline*}
belong to $A$. Finally, we find that 
$$d_{12}:=d_8^{\tau_{s_1}\tau_{s_2}}=(1,4)(2,3).(2,3)+(1,3)(2,4).(1,4)
$$
lies in $A$. Therefore, all 12 doubles of the shape $t.(1,4)+t^{(3,4)}.(2,3)$, where $t\in A_4$,
are in $A$. Applying $\tau_x$ to them, we obtain all 12 doubles of the shape $t.(2,4)+t^{(3,4)}.(1,3)$:
$d_{i+12}:=d_i^{\tau_x}$, where $i\in\{1,\ldots,12\}$.
It remains to prove that six doubles of shape $t.s+(t^{-1})^{(3,4)}.s$,
where $$t\in\{(1,2,3),(1,3,2),(1,3)(2,4)\} \text{ and } s\in\{(1,2),(3,4)\},$$ belong to $A$. 

Consider two doubles $d_s=s.(2,4)+s^{(3,4)}.(1,3)$ and $d_t=t.(1,4)+t^{(3,4)}.(2,3)$, where $s,t\in A_4$.
Using Lemma~\ref{l:3trans}, we find that their product equals 
$$\eta(d_t+d_s)-\frac{\eta}{2}(ts^{-1}.(1,2)+(st^{-1})^{(3,4)}.(1,2)+
(t^{-1})^{(3,4)}s.(3,4)+(s^{-1})^{(3,4)}t.(3,4)).$$
Substituting $s=()$, $t=(1,3)(2,4)$ and $s=(2,4,3), t=(1,3,4)$, 
we infer that elements 
$$q_1:=(1,3)(2,4).(1,2)+(1,4)(2,3).(1,2)+(1,3)(2,4).(3,4)+(1,4)(2,3).(3,4)$$ 
$$\text{and }q_2:=(1,3)(2,4).(1,2)+(1,4)(2,3).(1,2)+(1,2,4).(3,4)+(1,3,2).(3,4)$$ lie in A.
Then
\begin{multline*}
q_1\cdot q_2=((1,3)(2,4).(1,2))^2+((1,4)(2,3).(1,2))^2+(1,3)(2,4).(3,4)\cdot(1,2,4).(3,4)
\\+(1,3)(2,4).(3,4)\cdot(1,3,2).(2,4)+(1,4)(2,3).(3,4)\cdot(1,2,4).(3,4)
\\+(1,3)(2,4).(3,4)\cdot(1,3,2).(2,4)=(1,3)(2,4).(1,2)+(1,4)(2,3).(1,2)
\\+\eta[(1,3)(2,4).(3,4)+(1,4)(2,3).(3,4)+(1,2,4).(3,4)+(1,3,2).(2,4)]\\
-\eta[(2,4,3).(3,4)+(1,3,4).(3,4)].
\end{multline*}
Denote $d_{25}:=(1,3)(2,4).(1,2)+(1,4)(2,3).(1,2)$.
Then 
$$q_1\cdot q_2=d_{25}+\eta(q_1-d_{25}+q_2-d_{25})-\eta(s_4+s_9)=(1-2\eta)d_{25}+\eta(q_1+q_2-s_4-s_9)$$ 
and hence $(1-2\eta)d_{25}\in A$.
Since $\eta\neq1/2$, we infer that $d_{25}\in A$.
Therefore, $d_{26}:=d_{25}^{\tau_{s_5}}=(1,4,2).(1,2)+(1,2,3).(1,2)$
and $d_{27}:=d_{26}^{\tau_{x}}=(1,2,4).(1,2)+(1,3,2).(1,2)$ belong to $A$.
Finally, applying $\tau_y$ to $d_{25}$, $d_{26}$, and $d_{27}$, we obtain the remaining three doubles of $M_H$. Thus, all singles and doubles of $M_H$ lie in $A$, and hence $A=M_H$, as claimed.

Using GAP, we find that the determinant of the Gram matrix for $A$ with respect to the basis $s_1,\ldots,s_{12},d_1,\ldots, d_{30}$ equals 
$$ 2^{48}\cdot(\eta-\frac{1}{2})^8\cdot(\eta-\frac{1}{8})^2\cdot(7\eta+\frac{1}{4})\cdot(\eta+\frac{1}{4})
.$$ 
Hence the algebra is simple unless $\eta=\frac{1}{8}$, $-\frac{1}{4}$, or $\operatorname{char}\mathbb{F}\neq7$ and $\eta=-\frac{1}{28}$,.

\vspace{15px}
Further assume that elements $a$, $b$, $c$, $d$, and $e$ satisfy the set of relations $R_3:=R_2\cup\{(b^{(acd)^{-2}}e)^3\}$.
 
\textbf{Case 5.} Assume $(a^{(bacda)^2}e)^2=1$. Then $G$ is a homomorphic image of $\hat G / \langle R_3 \cup \{ (a^{(bacda)^2}e)^2 \} \rangle$. Computations show that the last group is of order~2.

Therefore, we can assume that $a$, $b$, $c$, $d$, and $e$ satisfy the set of relations $R_4:=R_3\cup\{(a^{(bacda)^2}e)^3\} $.

\textbf{Case~6.} Assume $(b^{a(acd)^2}e)^2=1$. Similarly to the previous case, the set of relations $R_4\cup\{ (b^{a(acd)^2}e)^2\}$ defines a group of order $2$, which is not possible.

\textbf{Case~7.} Therefore, we can assume that $a$, $b$, $c$, $d$, and $e$ satisfy the set of relations $R_5:= R_4\cup\{(b^{a(acd))^2}e)^3\}$. Then $G$ is a homomorphic image of $\hat G / \langle R_5 \rangle$. 
Denote $A:=b, B:=a, C:=c, D:=D, E:=e.$
Since $|ab|=|ac|=|ad|=|ae|=3$ and $|cd|=|be|=3$, we have  
$|BA|=|AC|=|CD|=|DA|=|AE|=|EB|=3$.
The set $R_5$ includes the following relations: $$(a^cd)^3=(a^be)^3=(b^{(acd)^2}e)^3=(b^{(acd)^{-2}}e)^3=(a^{(bacda)^2}e)^3=(b^{a(acd)^2}e)^3=1.$$
Therefore, it is true that
\begin{multline*}
(B^CD)^3=(B^AE)^3=(A^{(BCD)^2}E)^3=(A^{(BCD)^{-2}}E)^3
\\=(B^{(ABCDB)^2}E)^3=(A^{B(BCD)^2}E)^3=1.
\end{multline*}

Now the presentation A18a of $\cite{hs}$ implies that 
$G$ is a homomorphic image of $F(5,162)$, namely 
the index of $\hat G / \langle R_5 \rangle$ by $H=\langle a,b,c,d\rangle$ equals 19683. 
According to \cite[Example~18]{hs},
the group F(5,162) has the elementary abelian center of order $3^3$ and the same central type as $Wr(3^{1+2},4)$.
Moreover, it is mentioned after \cite[Example~19]{hs} that any 
quotient of $F(5,162)$, which requires five transposition generators, has central type $F(5,162)$ or $F(5,54)$. In this section we consider the first type and section Case~8
is devoted to the second one.

We use the description of $\Gamma(Wr(3^{1+2},4))$ from Subsection~\ref{s:Wr(3^1+2,n)}. 
So we suppose that $u$ and $v$ generate an extraspecial group $P$ of order $3^3$ and of exponent 3. Denote $w=[u,v]=u^{-1}v^{-1}uv$. Then the following equalities are true:
$$u^3=v^3=w^3=1, wu=uw, \text{ and } wv=vw.$$ 
In particular, each element of $P$ 
can be uniquely written as a product
$u^iv^jw^k$, where $1\leq i,j,k\leq3$. By Lemma~\ref{l:3trans}, each point of the Fischer space has form $u^iv^jw^k.(s,t)$, where $u^iv^jw^k\in P$ and $1\leq s<t\leq4$. Identify the elements of $D_7$-diagram with the following group elements. 
$$a:=(1,2),b:=(1,3),c:=(2,4),d:=v.(2,4),e:=u.(1,3).$$

It is easy to see that $|ab|=|ac|=|ad|=|ae|=|cd|=|be|=3$ and
$|bc|=|de|=|bd|=|ce|=2$.
Now we verify the relations of $R_5$:
$|a^cd|=|(1,4)(v.(2.4))|=3$, $|a^be|=|(2,3)(u.(1,3))|=3$.
Observe that $|b^{(acd)^2}e|=|b^{acda}e^{cd}|=|b^{acda}e|$.
Applying Lemma~\ref{l:lines}, we find that $b^{acda}=(2,3)^{cda}=(3,4)^{da}=(v.(2,3))^a=v.(1,3)$
and hence $|b^{(acd)^2}e|=3$.
Since 
$$|b^{(acd)^{-2}}e|=|b^{(dca)^2}e|=|b^{dacada}e|=|b^{dacdad}e|=|b^{dacda}e|=
|b^{adaca}e|=|b^{adcac}e|=|b^{adca}e|$$
and $b^{adca}=(2,3)^{dca}=(v.(3,4))^{ca}=(v^2.(2,3))^a=v^2.(1.3)$, we infer that $|b^{(acd)^{-2}}e|=3$.
Using the transpositions "on top" of $a$, $b$, $c$, $d$, and $e$, we can find the transpositions "on top" of $a^{(bacda)^2}$ and $b^{a(acd)^2}$,
namely, they equal to $(1,2)^{((1,3)(1,2)(2,4)(2,4)(1,2))^2}=(1,2)$ and $(1,3)^{(1,2)((1,2)(2,4)(2,4))^2}=(2,3)$, respectively.
By Lemma~\ref{l:lines}, we infer that $|a^{(bacda)^2}e|=|b^{a(acd)^2}e|=3$.
Therefore, the elements $a$, $b$, $c$, $d$, and $e$ satisfy all the relations of this case. Since they generate $Wr(3^{1+2},4)$,
we can use them for calculations in $A$.

Consider the automorphism $\tau$ of $G$
as in Proposition~\ref{p:fixed:Wr(3^1+2,n)}. It induces an automorphism of $\Gamma(G)$ as follows:
$(u^iv^jw^k.(s,t))^\tau=v^iu^jw^{-k}.(s,t)^\sigma=u^jv^iw^{-k-ij}.(s,t)^\sigma$. 
Then $a^\tau=a$, $b^\tau=c$, and $d^\tau=e$.
Denote $H=\langle\tau\rangle$. Therefore, $A$ is a subalgebra of 90-dimensional algebra $M_H$. We claim that $A=M_H$ of $\eta\neq2$ and prove this by showing that all elements of the basis from Proposition~\ref{p:fixed:Wr(3^1+2,n)} belong to $A$.

Suppose that $g.(1,2)$ is a single in $A$. Then 
$$(g.(1,2))^{\tau_y\tau_z}=(g.(3,4))^{{\tau_z}}=(g.(3,4))^{(v.(2,4))(u.(1,3))}=(vg^{-1}.(2,3))^{u.(1,3)}=ugv^{-1}.(1,2).$$
Denote $s_1:=x=(1,2)$. Then 
$s_2:=s_1^{\tau_y\tau_z}=uv^2.(1,2)$ and $s_3:=s_2^{\tau_y\tau_z}=u^2v.(1,2)$ belong to $A$.
Now 
$s_4:=s_2^{\tau_x}=uv^2.(2,1)=vu^2.(1,2)=u^2vw.(1,2)$
and $s_5:=s_3^{\tau_x}=u^2v.(2,1)=v^2u.(1,2)=uv^2w.(1,2)$
are elements of $A$. 
Using $s_4$, we find the following elements:
$s_6:=s_4^{\tau_y\tau_z}=w.(1,2)$, 
$s_7:=s_6^{\tau_x}=w^2.(1,2)$,
$s_8:=s_7^{\tau_y\tau_z}=uv^2w^2.(1,2)$, and $s_9:=s_8^{\tau_y\tau_z}=u^2vw^2.(1,2)$.
Applying $\tau_y$ to these nine singles,
we find the remaining nine singles of $M_H$: 
$s_{i+9}:=s_i^{\tau_y}$, where $i\in\{1,\ldots,9\}$.

If $g.(1,2)$ is a single, then $y^{\tau_{g.(1,2)}}=
g^{-1}.(2,3)+g.(1,4)$.
Denote $d_i:=y^{\tau_{s_i}}$, where $i\in\{1,\ldots,9\}$.
Therefore, all nine doubles of $M_H$ of shape 
$u^iv^jw^k.(1,4)+u^{2j}v^{2i}w^{2k-i-j}.(2,3)$, where $i+j\equiv0\pmod{3}$ and $0\leq k\leq 2$, are elements of $A$.
Similarly, if $g.(1,2)$ is a single, then $z^{\tau_{g.(1,2)}}=
gv.(1,4)+g^{-1}u.(2,3)$.
Denote $d_{i+9}:=z^{\tau_{s_i}}$, where $i\in\{1,\ldots,9\}$.
Therefore, all nine doubles of the shape 
$u^iv^jw^k.(1,4)+u^{2j}v^{2i}w^{2k-ij}.(2,3)$, where $i+j\equiv1\pmod{3}$ and $0\leq k\leq 2$, are elements of $A$.
Observe that 
$$z^{\tau_y}=(v.(2,4))^{((1,3)(2,4)}+(u.(1,3))^{(1,3)(2,4)}=v^2.(2,4)+u^2.(1,3).$$
Then $d_{i+18}:=z^{\tau_y\tau_{s_i}}$, where $i\in\{1,\ldots, 9\}$, are all nine doubles of $M_H$ of the shape 
$u^iv^jw^k.(1,4)+u^{2j}v^{2i}w^{2k-ij}.(2,3)$ with $i+j\equiv2\pmod{3}$ and $0\leq k\leq 2$.
Applying $\tau_x$ to doubles $d_1,\ldots, d_{27}$,
we get the remaining 27 doubles of $M_H$: $d_{i+{27}}:=d_i^{\tau_x}$, where $i\in\{1,\ldots,27\}$.

It remains to prove that 18 extras of shapes
$u^iv^jw^k.(1,2)+u^{2j}v^{2i}w^k.(1,2)$ and
$u^iv^jw^k.(3,4)+u^{2j}v^{2i}w^k.(3,4)$, where 
$i+j\not\equiv0\pmod{3}$ and $0\leq k\leq 2$, belong to $A$.

Since 
\begin{multline*}
y\cdot d_{11}=((1,3)+(2,4))\cdot(u.(1,4)+v.(2,3))\\=\eta(y+d_{11})-\frac{\eta}{2}(u.(3,4)+v^2.(3,4)+v^2.(1,2)+u.(1,2)),
\end{multline*}
we infer that
$q_1:=\frac{2}{\eta}(\eta(y+d_{11})-y\cdot d_{11})$ belong to $A$.
Then 
$$q_2:=q_1^{\tau_x}=u.(3,4)+v^2.(3,4)+v.(1,2)+u^2.(1.2)$$
and
\begin{multline*}
q_3:=q_2^{\tau_{s_9}}=u.(3,4)+v^2.(3,4)+(v.(1,2))^{u^2vw^2.(1,2)}+(u^2.(1,2))^{u^2vw^2.(1,2)}\\=
u.(3,4)+v^2.(3,4)+u^2vw^2v^2u^2vw^2.(1,2)+
u^2vw^2uu^2vw^2.(1,2)
\\=u.(3,4)+v^2.(3,4)+uvw.(1,2)+
u^2v^2w.(1,2)
\end{multline*}
are elements of $A$.
Now we find that
\begin{multline*}
q_1\cdot q_2=(u.(3,4)+v^2.(3,4))^2+(v^2.(1,2)+u.(1,2))\cdot(v.(1,2)+u^2.(1,2))
\\=u.(3,4)+v^2.(3,4)+2\cdot\left(u.(3,4)\cdot v^2.(3,4)\right)+
\eta\cdot\left(v^2.(1,2)+u.(1,2)+v.(1,2)+u^2.(1,2)\right)
\\-\frac{\eta}{2}\cdot\left((v^2.(1,2))^{v.(1,2)}+(u.(1,2))^{v.(1,2)}+
(v^2.(1,2))^{u^2.(1,2)}+(u.(1,2))^{u^2.(1,2)}\right)
\\=(1+\eta)\cdot\left(u.(3,4)+v^2.(3,4)\right)-\eta\cdot u^2vw^2.(3,4)
\\+\eta\cdot\left(v^2.(1,2)+u.(1,2)+v.(1,2)+u^2.(1,2)\right)-
\eta\cdot(1,2)-\frac{\eta}{2}\cdot(uvw.(1,2)+u^2v^2w.(1,2)).
\end{multline*}
Denote $d_{55}:=u.(3,4)+v^2.(3,4)$.
Then 
\begin{multline*}
q_1\cdot q_2=(1+\eta)d_{55}-\eta s_1-\eta s_{18}+\eta(q_1-d_{55}+q_2-d_{55})-\frac{\eta}{2}(q_3-d_{55})\\=
(1-\frac{\eta}{2})d_{55}+\eta(q_1+q_2-\frac{1}{2}q_3-s_1-s_{18}).
\end{multline*}
Since $\eta\neq2$, we infer that $d_{55}\in A$.
Applying $\tau_{s_i}$ with $10\leq18$ to $d_{55}$, we get nine extras with (3,4) "on top", 
including $d_{55}$ for $s_{18}=u^2vw^2.(3,4)$:  $d_{55}^{\tau_{s_{18}}}=
u.(3,4)^{s_{18}}+v^2.(3,4)^{s_{18}}=((u^2vw^2)u^2(u^2vw^2)).(3,4)+((u^2vw^2)v(u^2vw^2)).(3,4)=v^2.(3,4)+u.(3,4)=d_{55}$. Applying $\tau_y$ to these nine extras, we obtain the remaining nine extras with $(1,2)$ "on top". Thus, $A$ coincides with $M_H$ and as a consequence we find that the flip subalgebra 
$A(\tau)$ coincides with $M_H$.

Using GAP, we find that the determinant of the Gram matrix for $A$ with respect to the basis equals 
$$\frac{1}{2^{10}}(\eta-2)^{86}\cdot(14\eta-1)^2\cdot(67\eta+1)\cdot(13\eta+1).
$$

Suppose that $\eta=2$. In this case we show that $A$ has dimension 89. We know that 
$s_1,\ldots, s_{18}$ and $d_1,\ldots, d_{54}$ belong to $A$. Denote 18 extras with 
$(1,2)$ "on top" by $e_1,\ldots, e_9$ and 18 extras with 
$(3,4)$ "on top" by  $f_1,\ldots,f_9$. Now set $g_{ij}=e_i+f_j$, where $1\leq i,j\leq 9$.
We claim that $A$ is the span of $s_1,\ldots, s_{18}$, $d_1,\ldots, d_{54}$, and $g_{ij}$ with 
$1\leq i,j\leq 9$. To show this, we need to verify that this set is closed under multiplication.
Consider two doubles $d'=s.(1,3)+s^\sigma.(2,4)$ and $d''=t.(1,4)+t^\sigma.(2,3)$ of different kinds.
Then their product is equal to $$\eta(d'+d'')-\frac{\eta}{2}\left(s^{-1}t.(3,4)+t^{-\sigma}s^\sigma.(3,4)+st^{-\sigma}.(1,2)+ts^{-\sigma}.(1,2)\right).$$
 Note that $s^{-1}t.(3,4)+t^{-\sigma}s^\sigma.(3,4)\in\{ f_i~|~1\leq i\leq 9\}$ and 
 $st^{-\sigma}.(1,2)+ts^{-\sigma}.(1,2)\in\{e_i~|~1\leq i\leq 9\}$. Other products of single and doubles can be treated similarly. Now we deal with products involving elements $g_{ij}$. 
Applying $\tau_{s_i}$ with $1\leq i\leq 9$ and $10\leq i\leq 18$, we can transform any $g_{ij}$ to $q_1$. First, find $q_1^2$:
\begin{multline*}
q_1^2=q_1+2\cdot\left( u.(3,4)\cdot v^2.(3,4)+v^2.(1,2)\cdot u.(1.2) \right)\\=(1+\eta)\cdot q_1-\eta\cdot(u^2vw^2.(3,4)+u^2vw^2.(1,2))=(1+\eta)q_1-\eta(s_9+s_{18}).
\end{multline*}

Multiply now $q_1$ on $q=s.(3,4)+s^{-\sigma}.(3,4)+t.(1,2)+t^{-\sigma}.(1,2)\in\{g_{i,j}~|~1\leq i,j\leq 9\}$. If $q_1$ and $q$ have common summands, then $q_1\cdot q$ can be calculated as $q_1\cdot q_2$ above. So we can assume that all summands in $q_1$ and $q$ are distinct.
Note that $$q_1\cdot q=\left(v^2.(1,2)\cdot u.(1.2)\right)\cdot\left(t.(1,2)+t^{-\sigma}.(1,2)\right)+\left(v^2.(3,4)\cdot u.(3,4)\right)\cdot\left(s.(3,4)+s^{-\sigma}.(3,4)\right).$$
Let $t=u^iv^jw^k$. Then $t.(1,2)+t^{-\sigma}.(1,2)=u^iv^jw^k.(1,2)+u^{2j}v^{2i}w^k.(1,2)$.
We can assume that $i+j\equiv1\pmod 3$. Then 
\begin{multline*}
\left(v^2.(1,2)\cdot u.(1.2)\right)\cdot\left(t.(1,2)+t^{-\sigma}.(1,2)\right)=
\eta\left(v^2.(1,2)\cdot u.(1.2)+t.(1,2)+t^{-\sigma}.(1,2)\right)\\-
\frac{\eta}{2}\left(v^2t^{-1}v^2.(1,2)+v^2t^{\sigma}v^2.(1,2)+ut^{-1}u.(1,2)+ut^{\sigma}u.(1,2)\right)\\=\eta\left(v^2.(1,2)\cdot u.(1.2)+t.(1,2)+t^{-\sigma}.(1,2)\right)
-\frac{\eta}{2}( u^{-i}v^{1-j}w^{-k-i(1+j)}.(1,2)\\+u^jv^{i+1}w^{-k+j^2}.(1,2)+u^{-i-1}v^{-j}w^{-k+j^2}.(1,2)+u^{j-1}v^iw^{-k-i(j+1)}.(1.2))
\end{multline*}
Note that $-i+1-j\equiv j-1+i\equiv0\pmod3$ and hence $u^{-i}v^{1-j}w^{-k-i(1+j)}.(1,2)$ and 
$u^{j-1}v^iw^{-k-i(j+1)}.(1.2)$ are singles. On the other hand, we see that 
$u^jv^{i+1}w^{-k+j^2}.(1,2)+u^{-i-1}v^{-j}w^{-k+j^2}.(1,2)$ belongs to $\{e_n~|~1\leq n\leq 9\}$.
Similarly, we get that the product $\left(v^2.(3,4)\cdot u.(3,4)\right)\cdot\left(s.(3,4)+s^{-\sigma}.(3,4)\right)$ is written in terms of its factors, two singles, and an element from
$\{f_n~|~1\leq n\leq 9\}$. Therefore, $q_1\cdot q$ lies in the space.
Products of $q_1$ with singles and doubles can be found easily, we skip these calculations.
Finally, observe that $\langle f_1+e_1, f_1+e_2,\ldots f_1+e_9, e_1+f_2, e_1+f_3,\ldots, e_1+f_9 \rangle=\langle\{g_{ij}~|~1\leq i,j\leq 9\}\rangle$ and hence $A$ has dimension 89 and as a basis one can take 18 singles of $M_H$, 54 doubles of $M_H$ and these 17 elements of $\{g_{ij}~|~1\leq i,j\leq 9\}$.

\textbf{Case~8.} Now suppose that relations for $G$ are the same as in the previous case, but $G$ has the same central type as $F(5,54)$. According to \cite{hs}, the group $F(5,54)$ has the same central type as $Wr(3^2,4)$. We use the description of the Fischer space of $Wr(3^2,4)$ from Subsection~\ref{s:Wr(3^2,n)}. Fix generators $u$ and $v$ of the elementary abelian group $P$ of order nine. Each point of the Fischer space has form $u^rv^s.(i,j)$,
where $0\leq r,s\leq 2$ and $(i,j)\in S_4$.

We identify the elements of $D_7$-diagram with the following group elements: 
$$a:=(1,2),b:=(1,3),c:=(2,4),d:=v.(2,4), \text{ and }e:=u.(1,3)$$

It is easy to see that they satisfy the relations of the diagram.
The relations in $R_5$ can be verified in the same way as in the previous case. Since $a$, $b$, $c$, $d$, and $e$ generate $Wr(3^2,4)$,
we can use them for calculations in $A$.

Consider the automorphism $\tau$ and permutation $\pi=(1,2)(3,4)$
as in Proposition~\ref{p:fixed:Wr(3^2,n)}: then 
$(u^sv^r.(i,j))^\tau=v^su^r.(i,j)^\pi=u^rv^s.(i^\pi,t^\pi)$. 
Hence $a^\tau=a$, $b^\tau=c$, and $d^\tau=e$.
Denote $H=\langle\tau\rangle$. Then $A$ is a subalgebra of 30-dimensional algebra $M_H$. We claim that $A=M_H$ if $\eta\neq2$ and prove this by showing that all elements of the basis from Proposition~\ref{p:fixed:Wr(3^2,n)} belong to $A$.

Denote $s_1:=x=(1,2)$. Then $s_2:=s_1^{\tau_y}=(3,4)$ and $s_3:=s_1^{\tau_z}=(1,2)^{(v.(2,4))(u.(1,3))}=(v.(1,4))^{u.(1,3)}=u^2v.(3,4)$ belong to $A$.
Now we find that $s_4:=s_3^{\tau_{s_2}}=uv^2.(3,4)\in A$. This implies that
$s_5:=s_4^{\tau_y}=uv^2.(1,2)$ and $s_6:=s_4^{\tau_z}=(uv^2.(3,4))^{(v.(2,4))(u.(1,3))}=(u^2v^2.(2,3))^{u.(1,3)}=u^2v.(1,2)$
are elements of $A$. Therefore, all six singles $s_1,\ldots, s_6$ of $M_H$ belong to $A$.

Applying the Miyamoto involutions of singles to $y$, we obtain doubles 
\begin{multline*}
d_1:=y^{\tau_x}=(2,3)+(1,4), d_2:=y^{\tau_{s_5}}=(1,3)^{uv^2.(1,2)}+(2,4)^{uv^2.(1,2)}=u^2v.(2,3)+uv^2.(1,4),\\\text{and }d_3:=y^{\tau_{s_6}}=(1,3)^{u^2v.(1,2)}+(2,4)^{u^2v.(1,2)}=uv^2.(2,3)+u^2v.(1,4).
\end{multline*}
Now we use $z$ instead of $y$ and find three other doubles in $A$:
\begin{multline*}
d_4:=z^{\tau_x}=u.(2,3)+v.(1,4), d_5:=z^{\tau_{s_5}}=v.(2,3)+u.(1,4),\\\text{and }d_6:=z^{\tau_{s_6}}=u^2v^2.(2,3)+u^2v^2.(1,4).
\end{multline*}
Finally, using $z^{\tau_y}$, we get the remaining three doubles with $(1,4)$ and $(2,3)$: 
\begin{multline*}
d_7:=z^{\tau_y\tau_x}=(u^2.(1,3)+v^2.(2,4))^{\tau_x}=u^2.(2,3)+v^2.(1,4), d_8:=z^{\tau_y\tau_{s_5}}=uv.(2,3)+uv.(1,4),\\\text{and }d_9:=z^{\tau_y\tau_{s_6}}=v^2.(2,3)+u^2.(1,4).
\end{multline*}
Applying $\tau_x$ to $d_1,\ldots,d_9$, we obtain the remaining nine doubles of $M_H$:
$d_{i+9}:=d_i^{\tau_x}$, where $i\in\{1,\ldots,9\}$.

It remains to show that six extras of $M_H$ belong to $A$.
Since 
\begin{multline*}
y\cdot d_5=((1,3)+(2,4))\cdot(u.(1,4)+v.(2,3))\\=\eta(y+d_5)-\frac{\eta}{2}(u.(3,4)+v^2.(3,4)+v^2.(1,2)+u.(1,2)),
\end{multline*}
we infer that
$q_1:=\frac{2}{\eta}(\eta(y+d_5)-y\cdot d_5)$ belong to $A$.
Then 
$$q_2:=q_1^{\tau_x}=u.(3,4)+v^2.(3,4)+v.(1,2)+u^2.(1,2)$$
and
\begin{multline*}
q_3:=q_2^{\tau_{s_6}}=u.(3,4)+v^2.(3,4)+v.(1,2)^{u^2v.(1,2)}+u^2.(1.2)^{u^2v.(1,2)}\\=
u.(3,4)+v^2.(3,4)+u^2vv^2u^2v.(1,2)+
u^2vuu^2v.(1,2)
\\=u.(3,4)+v^2.(3,4)+uv.(1,2)+
u^2v^2.(1,2)
\end{multline*}
are elements of $A$.
Now we find that
\begin{multline*}
q_1\cdot q_2=(u.(3,4)+v^2.(3,4))^2+(v^2.(1,2)+u.(1,2))\cdot(v.(1,2)+u^2.(1,2))
\\=u.(3,4)+v^2.(3,4)+2\cdot u.(3,4)\cdot v^2.(3,4)+
\eta\cdot\left(v^2.(1,2)+u.(1,2)+v.(1,2)+u^2.(1,2)\right)
\\-\frac{\eta}{2}\cdot\left((v^2.(1,2))^{v.(1,2)}+(u.(1,2))^{v.(1,2)}+
(v^2.(1,2))^{u^2.(1,2)}+(u.(1,2))^{u^2.(1,2)}\right)
\\=(1+\eta)\cdot\left(u.(3,4)+v^2.(3,4)\right)-\eta\cdot u^2v.(3,4)
\\+\eta\cdot\left(v^2.(1,2)+u.(1,2)+v.(1,2)+u^2.(1,2)\right)-
\eta\cdot(1,2)-\frac{\eta}{2}\cdot\left(uv.(1,2)+u^2v^2.(1,2)\right).
\end{multline*}
Denote $d_{19}:=u.(3,4)+v^2.(3,4)$.
Then $q_1\cdot q_2=(1+\eta)d_{19}-\eta s_1-\eta s_3+\eta(q_1-d_{19}+q_2-d_{19})-\frac{\eta}{2}(q_3-d_{19})=
(1-\frac{\eta}{2})d_{19}+\eta(q_1+q_2-\frac{1}{2}q_3-s_1-s_3)$.

If $\eta\neq2$, then we infer that $d_{19}\in A$. Now $d_{20}:=d_{19}^{\tau_{s_2}}=u^2.(3,4)+v.(3,4)$ and $d_{21}:=d_{19}^{\tau_{s_4}}=uv.(3,4)+u^2v^2.(3,4)$ belong to $A$.
Applying $\tau_y$ to these three elements, we get the three remaining extras: $d_{22}:=d_{19}^{\tau_y}$,
$d_{23}:=d_{20}^{\tau_y}$, and $d_{24}:=d_{21}^{\tau_y}$.
Thus, $A$ coincides with $M_H$ and as a consequence we find that the flip subalgebra $A(\tau)$ coincides with $M_H$.

Using GAP, we find that the determinant of the Gram matrix for $A$ with respect to the basis equals 
$$(\eta-2)^{26}\cdot(5\eta-1)^2\cdot(22\eta+1)\cdot(\eta+1/4).$$

Suppose that $\eta=2$. We claim that $A$ has dimension 29. 
Denote $f_1:=d_{22}$,  $f_2:=d_{23}$, and $f_4:=d_{24}$. 
Consider elements $g_{ij}=d_i+f_j$, where $19\leq i\leq 21$ and $1\leq j\leq3$.
Then $A$ is the span of elements $s_1,\ldots,s_6$, $d_1,\ldots,d_{18}$ and $\{g_{ij}~|~19\leq i\leq 21$ and $1\leq j\leq 3\}$. This can be verified similarly to Case~7. So as a basis of $A$ one can take 6 singles, 18 doubles, and elements $d_{19}+f_1$, $d_{19}+f_2$, $d_{19}+f_3$, $d_{20}+f_1$, $d_{21}+f_1$.






\Addresses

\end{document}